\documentclass[10pt,twoside,reqno,a4paper]{article}

\usepackage[T1]{fontenc}
\usepackage{a4wide}

\usepackage{amsfonts, amsmath, amssymb,latexsym}
\usepackage[activeacute,english]{babel}
\usepackage{epsfig}
\usepackage{graphicx}
\usepackage[font=sf,sc]{caption}
\usepackage{float}
\usepackage{cancel}
\usepackage{color}
\usepackage[utf8]{inputenc}
\usepackage{amsthm}
\usepackage{bm}
\usepackage[all]{xy}
\usepackage{enumerate}
\usepackage{comment}
\usepackage[colorinlistoftodos]{todonotes} 
\usepackage{bbm}

\usepackage[colorlinks=true,linkcolor=blue,urlcolor=blue,citecolor=blue]{hyperref}

\newcommand{\R}{{\mathbb {R}}}

\renewcommand{\leq}{\leqslant}
\renewcommand{\geq}{\geqslant}

\newcommand{\RR}{\mathbb{R}}

\newcommand{\DDD}{\mathcal{D}}

\newcommand\cadlag{c\`{a}dl\`{a}g}

\newcommand{\Scal}[1]{ {}_q\langle #1 \rangle_p}
\newcommand{\ScalD}[1]{\left  \langle #1 \right \rangle_{\DDD}}
\newcommand{\IndN}[1]{\mathbbm{1}_{#1}}

\newtheorem{theorem}{Theorem}

\theoremstyle{plain}

\newtheorem{corollary}[theorem]{Corollary}

\newtheorem{definition}[theorem]{Definition}
\newtheorem{example}[theorem]{Example}

\newtheorem{lemma}[theorem]{Lemma}

\newtheorem{proposition}[theorem]{Proposition}
\newtheorem{prop}[theorem]{Proposition}
\newtheorem{remark}[theorem]{Remark}

\numberwithin{equation}{section}
\numberwithin{theorem}{section}
\newcommand{\E}{\ensuremath{\mathbb{E}}}

\allowdisplaybreaks

\newtheorem*{assumption}{Assumption}
\theoremstyle{definition}

\usepackage{enumerate}


\definecolor{mycolor}{rgb}{1,0.5,0.5}

%
%


\title{Stochastic systems with memory and jumps}

\author{D.R. Ba\~{n}os\thanks{Institute of Mathematics of the University of Barcelona,
University of Barcelona, Gran Via de les Corts Catalanes 585, 08007, Barcelona, and Department of Mathematics, University of Oslo, P.O. Box 1053 Blindern, N-0316 Oslo, Norway, Email: davidru@math.uio.no}, 
F. Cordoni\thanks{Department of Mathematics, University of Trento, Via Sommarive, 14, 38123 Trento, Italy. Email: francesco.cordoni@unitn.it}, 
G. Di Nunno\thanks{Department of Mathematics, University of Oslo, P.O. Box 1053 Blindern, N-0316 Oslo, Norway, and Norwegian School of Economics and Business Administration, Helleveien 30, N-5045 Bergen, Norway. Email: giulian@math.uio.no}, 
L. Di Persio\thanks{Department of Computer Science, University of Verona, Strada le Grazie, 15, 37134 Verona, Italy. Email: luca.dipersio@univr.it }, and 
E.E. R\o se\thanks{Department of Mathematics, University of Oslo, P.O. Box 1053 Blindern, 0316 Oslo, Norway. Email: elinero@math.uio.no}}
\date{\today}

 \begin{document}
\maketitle

\begin{abstract}
Stochastic systems with memory naturally appear in life science, economy, and finance. We take the modelling point of view of stochastic functional delay equations and we study these structures when the driving noises admit jumps. Our results concern existence and uniqueness of strong solutions, estimates for the moments and the fundamental tools of calculus, such as the It\^o formula. We study the robustness of the solution to the change of noises. Specifically, we consider the noises with infinite activity jumps versus an adequately corrected Gaussian noise. The study is presented in two different frameworks: we work with random variables in infinite dimensions, where the values are considered either in an appropriate $L^p$-type space or in the space of c\`adl\`ag paths. The choice of the value space is crucial from the modelling point of view as the different settings allow for the treatment of different models of memory or delay. Our techniques involve tools of infinite dimensional calculus and the stochastic calculus via regularisation. 

{\it Keywords:} Stochastic delay equations, memory, jump diffusions, It\^o formula, moment estimates, calculus via regularisation.

{\it AMS classification:} 34K50, 60H07
\end{abstract}


\section{Introduction}

Delay equations are differential equations whose coefficients depend also on the past history of the solution. Besides being of mathematical interest on their own, delay equations naturally arise in many applications, ranging from mathematical biology to mathematical finance, where often the effect of the memory or delay on the evolution of the system cannot be neglected, we refer to \cite{Chang2,Chang,MR2241374,MR2332878,MR1218880,kuchler,MR1652338} and references therein for applications in different areas.

When dealing with a delay differential equation (DDE), one cannot in general relay on standard existence and uniqueness theorems, but ad hoc results have to be proven. In general this is done by lifting the DDE, from having a solution with values in a finite dimensional state space, such as $\R^d$, to having values in an infinite dimensional path space, which has to be carefully chosen according to the specific problem. For the case of \emph{deterministic} delay differential equations an extensive literature exists, we refer the reader to the monographs \cite{diekmann1995delay,engel2000one} for details.

When considering \emph{stochastic} delay differential equations (SDDE), that is DDE perturbed by a stochastic noise, one encounters problems that did not appear in the deterministic case  or in classical stochastic differential equations. In particular the SDDE fails to satisfy the Markov property, hence one cannot rely on the well established setting of Markov processes for the study of the solution. As in the deterministic case, however, one can apply the key idea to lift the SDDE to have values in a suitable infinite dimensional path space. In doing so, one is able to recover the Markov property, nevertheless the main drawback is that now one is dealing with an infinite dimensional stochastic partial differential equation (SPDE). Although a well established theory for SPDE's exists, some fundamental results, known in the finite dimensional case, fail to hold true in the infinite dimension. In particular when considering infinite dimensional SPDE's, the concept of \emph{quadratic variation} is not a straightforward generalisation of the classical notion of quadratic variation. We recall that this concept is crucial in essential tools of stochastic analysis, such as the It\^o formula.
Some concrete results around the concept of quadratic variation in infinite dimensions have appeared only recently, see \cite{MR3272616}. 

SDDE's have been first studied in the seminal works \cite{ChMi,SMohammed2} and then extensively studied in \cite{f10st,MR1652338,Yan}, though in a different, but yet related setting. Recently there has been a renewed interest in SDDE's motivated by financial applications. In \cite{dupire2009functional} a path dependent stochastic calculus was first suggested and then widely developed in \cite{cont2010change,cont2013functional}.

From the stochastic calculus point of view, the technique of regularisation, recently introduced, proved to be powerful to define the a stochastic integral and to prove a general It\^{o} formula for stochastic differential equation both in finite and infinite dimensions. The first results exploiting the stochastic calculus via regularisation are found in \cite{MR1350257,MR1367665}, where a generalisation of the It\^{o} formula was proved. More recently in \cite{di2012generalized} a new  concept of quadratic variation for Banach space-valued processes was proposed and applied to prove a suitable It\^{o} formula for infinite dimensional stochastic processes. This triggered a stream of studies aimed at deriving a suitable It\^{o}'s formula for delay equations and at studying deterministic problems that can be tackled by a stochastic approach. Particular attention was given to the Kolmogorov equation. We refer to \cite{c14cal,c15fun,c14reg,f13in,f10st}. Eventually in \cite{c15fun,f13in} the relationship between the \textit{path-wise calculus} and the \textit{Banach-space} calculus was detailed.

We remark that all the aforementioned results for delay equations are proved in the case when the driving noise is continuous, such as for a standard Brownian motion. Very few results exist when the noise allows for random jumps to happen, see, e.g. \cite{reiss2002nonparametric,MR2260741}. 

As mentioned above, there are different approaches to deal with SDDE's. 
We work with the setting of stochastic functional delay differential equations (SFDDE), first introduced in \cite{SMohammed2} and further developed in \cite{MR1652338,Yan}. This choice is motivated by the fact that it appears to be the right compromise between a general purely infinite dimensional SPDE and a classical finite dimensional SDE. In fact, even if the stochastic delay equation is treated as an infinite dimensional equation, there is always a clear connection with its finite dimensional realisations, so that standard finite dimensional It\^{o} calculus can be often used.

The aim of the present paper is to extend the theory of SFDDE, studied in \cite{SMohammed2} for a Brownian driving noise, to include jumps, that is to deal with noises of jump-diffusion type. Specifically we aim at settling the existence and unicity of solutions, and derive the fundamental tools of a stochastic calculus for SFDE's with jumps. We also study the robustness of the solutions of SFDDEs to changes of the driving noise. This is an important analysis in view of the future applications. From a finite dimensional point of view this was studied in e.g. \cite{MR2814479}.

We consider an $\R^d-$valued SDE of the form
\begin{equation}\label{initialproblem}
\begin{split}
d X(t)  = & f(t,X(t),X(t+\cdot)) dt + g(t,X(t),X(t+\cdot)) dW(t) \\ 
 & + \int_{\R_0} h(t,X(t),X(t+\cdot))(z) \tilde{N}(dt,dz)\, ,\quad t \in [0,T],
\end{split}
\end{equation}
where $W$ is a standard Brownian motion, $\tilde{N}$ is a compensated Poisson random measure and $f$, $g$ and $h$ are some given suitable functional coefficients. With the notation $X(t+\cdot)$ we mean that the coefficient may depend also on the past values of the solution on the interval $[t-r,r]$ for some fixed delay $r>0$. 
It is this dependence on the past values of the evolution that is identified as memory or, equivalently, delay.
The formal introduction of the current notation will be carried out in the next section. Notice that at this stage equation \eqref{initialproblem} is a finite dimensional SDE with values in $\R^d$.

We now lift the process \eqref{initialproblem} to have values in a suitable infinite dimensional path space. The choice of the suitable space is truly a key issue. As illustration, consider the purely diffusive case and denote the maximum delay appearing in \eqref{initialproblem} by $r >0$. Then, in \cite{Yan} a product space of the form $M^p :=L^p([-r,0] ;\R^d) \times \R^d$, $p \in [2,\infty)$, was chosen, whereas in \cite{MR1652338} the space of continuous functions $\mathcal{C}:= \mathcal{C}([-r,0];\R^d)$ was taken as reference space. With the former choice one can rely on well-established results and techniques for $L^p-$spaces. Nevertheless this choice may seem artificial when dealing with a past path-dependent memory. For this reason the second choice, the space of continuous functions, is often considered the right space where to study delay equations, though it requires mathematically careful considerations. 

The natural extension of SFDDE's to the jump-diffusion case correspondingly leads to two possible choices of setting: the product space $ M^p$ and the space of \cadlag \ (right continuous with finite left limit) functions $\DDD := \DDD([-r,0];\RR^d)$. We decided to carry out our study in both settings in order to give a general comprehensive and critical presentation of when and in what sense it may be more suitable to treat the study in the one or the other setting. In fact, on the one side, we have the inclusion $\DDD \subset M^p$, with the injection being continuous, so that the $M^p-$setting appears to be more general, on the other side we see that existence and uniqueness of the solution of an SFDDE cannot be established in general in the space $M_p$. This in fact depends on the choice of type of delay or memory. In fact the drawback of the $M^p$ approach is that it does not apply to 
to SFDDE's with discrete delay, which are equations of the form
\begin{align}
 X(t)=\int_0^t X(s+\rho)ds+\int_0^t X(s+\rho)dW(s)\, ,
\end{align}
where $\rho \in [-r,0)$ is a fixed parameter. Certainly, the reason is that $X(t+\rho)$ is actually interpreted as an evaluation of the segment $X_t =\{X(t+s), s\in [-r,0]\}$ at the point $s=\rho$. Such operation is not well-defined in the $M^p$-setting. To see this, simply take two elements $(\eta_1,\eta_1(0)),(\eta_2,\eta_2(0))\in M^p$ such that $\eta_1(s) = \eta_2(s)$ for all $s\in [-r,0]\setminus\{\rho\}$ and $\eta_1(\rho)\neq \eta_2(\rho)$. Then, clearly $(\eta_1,\eta_1(0))$ and $(\eta_2,\eta_2(0))$ belong to the same class in $M^p$, but the evaluation at $\rho$ is not uniquely determined. The discrete delay case can be treated in the setting given by $\DDD$.

In the sequel, we thus lift equation \eqref{initialproblem} to have values either in $M^p$, with $p \in [2,\infty)$, or in $\DDD$, exploiting the notion of \textit{segment}. We then study a SFDDE of the form,
\begin{align}\label{eq:SFDEgeneral}
d X(t)&=f(t, X_t) d t+g(t,X_t)d W(t)+ \int_{\mathbb{R}_0}h(t,X_t)(z)\tilde N(d t, d z)\\
X_0&=\eta \notag 
\end{align}
where we denote $X_t$ the \textit{segment} of the process $X$ on an interval $[t-r,t]$, that is 
\[
X_t := \left \{X(t+\theta) \, : \, \theta \in [-r,0] \right \}\, ,
\]
being $r\geq 0$ the maximum delay. By $X(t)$ we denote the present value of the process at time $t$. Also $\eta$ is a function on $[-r,0]$. 

In this paper, first we establish existence, uniqueness and moment estimates for the equation \eqref{eq:SFDEgeneral} where the \textit{segment} $X_t$ takes values either in $M^p$ or in $\DDD$.
Then we look at the robustness of the model to changes of the noise. In particular, we study what happens if we replace the small jumps of the infinite activity Poisson random measure $N$, by a continuous Brownian noise $B$. This is done by comparing a process $X$ with dynamics
\begin{align}
 d X(t)&=f(t, X_t) d t+g(t,X_t)d W(t)+ \int_{\mathbb{R}_0}h_0(t,X_t)\lambda(z)\tilde N(d t, d z)\\
X_0&=\eta, \notag
\end{align}
to the process $X^{(\epsilon)}$ defined by
\begin{align}
d X^{(\epsilon)}(t)&=f(t, X^{(\epsilon)}_t) d t+g(t,X^{(\epsilon)}_t)d W(t)+h_0(t,X^{(\epsilon)}_t)\int_{|z|<\epsilon}|\lambda(z)|^2 \nu(dz) dB(t)\notag \\
&+ \int_{|z|\geq \epsilon}h_0(t,X^{(\epsilon)}_t)\lambda(z)\tilde N(d t, d z)\\
X^{(\epsilon)}_0&=\eta \, .\notag
\end{align}
We remark that the choice of this approximate guarantees the same so-called total volatility, using a terminology from financial modelling. 

Eventually, exploiting the \textit{stochastic calculus via regularisation} we prove an It\^{o} type formula for stochastic delay equations with jumps, showing that the results are in fact coherent with the results obtained in \cite{MR1652338,Yan}. We work with forward integrals in the following sense. In general, given the stochastic processes $X = \{X_s, s \in [0,T]\}$, and $Y=\{Y_s,  s \in [0,T]\}$, taking  values in $L^p([0,T], \mathbb{R}^d)$ and its topological dual, respectively, we define the \textit{forward integral} of $Y$ against $X$ as 
\begin{equation}\label{EQN:ForIntDef}
\int_0^t \Scal{Y_s\, , \, dX_s}:= \lim_{\epsilon \downarrow 0} \int_0^t \left  \langle Y_s\, , \, \frac{X_{s+\epsilon}-X_s}{\epsilon} \right  \rangle_p ds\, ,
\end{equation}
where the limit holds in \textit{probability} and we denoted by $\langle \cdot , \cdot \rangle_p$ the paring between $L^p([-r,0], \mathbb{R}^d)$ and its dual. Furthermore, if the above limit holds \textit{uniformly on probability on compact sets} (ucp), we immediately have that the process
\[
\left (\int_0^t \Scal{Y_s\, , \, dX_s}\right )_{t \in [0,T]}\, ,
\]
admits a \cadlag \ version and we say that the \textit{forward integral} exists. When the two processes have values in the space $M^p$ and ${M^p}^*$, we are able to show that the above limit holds in fact ucp, characterizing thus the \textit{forward integral} in terms of the derivative of the process $X$, which coincides with the operators introduced in \cite{Yan} and in \cite{f13in}. 

The present work is structured as follows. In Section \ref{TheEquationsExistenceUniquenessMomentestimates3} we introduce the main notation used throughout the whole paper. In Section \ref{sec:functionalsOnS} we study existence and uniqueness results for equation \eqref{eq:SFDEgeneral} with values in $\DDD$, whereas in Section \ref{sec:M2existenceAndUniqueness} we prove the same results in the $M^p$ setting. Then, in Section \ref{robust} we prove the robustness of equation \eqref{eq:SFDEgeneral} to the change of the noise. Eventually in Section \ref{SEC:Ito} we prove a suitable It\^{o}-type formula for SFDDE's with values in $M^p$ and in $\DDD$.


\section{Stochastic functional differential equations with jumps}\label{TheEquationsExistenceUniquenessMomentestimates3}

\subsection{Notation}\label{TheEquationsExistenceUniquenessMomentestimates2}
Let $(\Omega, \mathcal F, \mathbb F:=\{\mathcal F_t\}_{t\in[0,T]}, P)$ be a complete, filtered probability space satisfying the usual hypotheses for some finite time horizon $T<\infty$. Let $r\geq 0$ be  a non-negative constant denoting the maximum delay of the equations considered. We extend the filtration by letting 
 $\mathcal F_s=\mathcal F_0$ for all $s\in[-r,0]$. This will still be denoted by $\mathbb{F}$.

Let $W=(W^1,\dots, W^m)^{\mathsf T}$ be an $m$-dimensional $\mathbb F$-adapted Brownian motion and  $N=(N^1,\dots, N^n)^{\mathsf T}$ be the jump measures associated with $n$ independent  $\mathbb F$-adapted L\'evy processes, with L\'evy measures $\nu = (\nu_1,\dots ,\nu_n)$ respectively. We denote by $\tilde N$, the compensated Poisson random measure 
\[
\tilde N(d t, d z):=(N^1(d t, d z)-\nu_1(d z)dt, \dots, N^n(d t, d z)-\nu_n(d z)dt)^{\mathsf T} \, .
\]

Consider the equation
\begin{align}\label{eq:Main1}
\begin{split}
 d X(t)&=f(t, X_t) d t+g(t,X_t)d W(t)+ \int_{\mathbb{R}_0}h(t,X_t)(z)\tilde N(d t, d z)\\
X_0&=\eta, 
\end{split} 
\end{align}
where $f$,$g$ and $h$ are some given functionals on a space containing the segments $X_t$, $t\in[0,T]$ of the process $X$. We will give precise definitions of the segments and the coefficient functionals 
below. Equations of the form \eqref{eq:Main1} will be referred to as \emph{stochastic functional delay differential equation} (SFDDE).  

We remark that equation \eqref{eq:Main1} is to be interpreted component-wise as a system of SFDDE of the following form:

\begin{align*}
dX^i(t)&=f^i(t,X_t)dt+ \sum_{j=1}^m g^{i,j}(t,X_t)dW^j (t)+ \sum_{j=1}^n\int_{\mathbb{R}_0}h^{i,j}(t,X_t,z)\tilde N^j(dt,dz),\\
X^i_0&=\eta^i \, , \quad i = 1, \dots, d \, .
\end{align*}

With the component-wise interpretation in mind, it is natural to require that the images $f(t,X_t)$ and $g(t,X_t)$  of the coefficient functionals $f$ and $g$ are contained in the spaces $L^2(\Omega, \mathbb R^d)$ and $L^2(\Omega, \mathbb R^{d\times m})$ respectively.  Similarly, we want the image $h(t,X_t)$ of $h$ to be contained in a set of matrices with $j$'th column in $L^2(\Omega, L^2(\nu_j,\mathbb R^d))$. To express the space of all such matrices in a compact manner, we introduce the following notation.
For the $\mathbb R^n$-valued measure $\nu=(\nu_1, \ldots, \nu_n)^{\mathsf T}$, we will write 
 $L^p(\nu)= L^p(\nu,\mathbb R^{d\times n})$ $(p \geq 2)$, to denote the set of measurable functions
\begin{align*}
 H:\mathbb R_0\rightarrow\mathbb R^{d\times n},
\end{align*}
such that 
\begin{align}\label{eq:LnuNorm_columnWise}
\| H\|^{p}_{L^{p}(\nu)}:=\sum_{j=1}^n\| H^{,j}\|^{p}_{L^p(\nu_j,\mathbb R^d)}<\infty.
\end{align}
Here we have used the notation  $H^{,j}$ to denote the $j$'th column of $H$. Notice also that the Bochner space  $$L^{q}(\Omega, L^{p}(\nu, \mathbb R^{d\times n})) \quad (q\geq 2)$$ consists of the measurable functions $\mathcal H:\Omega\mapsto L^{p}(\nu, \mathbb R^{d\times n})$ such that
\begin{align}
 \| \mathcal H\|^{q}_{L^{p}(\Omega, L^{p}(\nu, \mathbb R^{d\times n}))}:=E[\| \mathcal H\|^{q}_{L^p(\nu,\mathbb R^{d\times n})}]<\infty.
\end{align}
For convenience, we will sometimes omit to explicitly specify the spaces $\mathbb R^d$, $\mathbb R^{d\times m}$ and $\mathbb R^{d\times n}$, when it is clear from the context which space to consider and no confusion is possible. In this paper,  when no confusion will occur, the standard Euclidean norm $\| \cdot\|_{\R^{k\times l}}$, will be denoted by  $|\cdot|$  for any $k$, $l \in \mathbb{N}$.  

\vspace{5mm}
Hereafter we introduce the relevant spaces we work with in the sequel.
For $0\leq u \leq T$, let
\begin{align}\label{def:D}\mathcal{D}_u:=\mathcal{D}\mathcal([-r,u],\mathbb R^d),\end{align} 
denote the space of all \cadlag \ functions from $[-r,u]$ to $\mathbb{R}^d$, equipped with the uniform norm 
\begin{align}\label{def:NormD}
\|\eta\|_{\mathcal{D}_u}:=\sup_{-r\leq \theta\leq u}\{|\eta(\theta)|\}, \quad \eta \in \mathcal{D}_u.
\end{align} 
Set $\mathcal{D} := \mathcal{D}_0$.
For $2\leq p <\infty$, let $L^p_u:=L^p([-r,u], \mathbb R^d)$ and $$ M^p_u := L^p_u \times\mathbb R^d$$ with norm given by
$$\|(\eta,v) \|_{M^p_u}^p  := \| \eta  \|^p_{L^p_u}+|v|^p, \quad \eta \in M^p_u$$
Set $L^p := L_0^p$ and $M^p := M^p_0$.

\vspace{2mm}
We recall that the $M^p_u$-spaces are separable Banach spaces and  $M^2_u$ is also a Hilbert space.
On the other side $\mathcal{D}_u$ equipped with the topology given by \eqref{def:NormD} is a non-separable Banach space. The space $\mathcal{D}_u$ equipped with the Skorohod topology is separable metric space. Moreover, there exists also a topology on $\mathcal{D}_u$, equivalent to the Skorohod topology, such that $\mathcal{D}_u$ is a complete separable metric space. See e.g. \cite{MR0233396, MR0226684}. 

Observe  that if $\eta\in \mathcal{D}$, then
\begin{align}
 \| (\eta\mathbbm{1}_{[-r,0)},\eta(0)) \|^p_{M^p}= \| \eta\mathbbm{1}_{[-r,0)}\|^p_{L^p}+ |\eta(0)|^p\leq (r+1)\| \eta\|^p_{\mathcal{D}}. \label{n1}
\end{align}
By \eqref{n1}, and since the elements in $M^p$ have at most one \cadlag \ representative, the linear functional $$\eta\mapsto (\eta\mathbbm{1}_{[-r,0)},\eta(0))$$ is a linear continuous embedding of $\mathcal{D}$ into $M^p$.
Note that we will write $\| \eta\|_{M^p}$ in place of $\| (\eta\mathbbm{1}_{[-r,0)},\eta(0))\|_{M^p}$.

\vspace{5mm}
We now introduce the notion of \textit{segment} that will play an important role in this paper. 
\begin{definition}
For any stochastic process $Y:[-r,T]\times\Omega\rightarrow \mathbb R^d$, and each $t\in [0,T]$, we define the \emph{segments} 
\begin{align*}Y_t:[-r,0]\times\Omega\rightarrow \mathbb R^d,\quad\textnormal{by}\quad
 Y_t(\theta,\omega):=Y(t+\theta,\omega), \quad \theta\in[-r,0], \quad \omega\in \Omega.
\end{align*} 
In view of the arguments above, for each $t$, the segment can also be regarded as a function
$$\Omega\ni\omega\mapsto Y_t(\cdot,\omega)\in\mathcal{D}$$ 
or
$$\Omega\ni\omega\mapsto (Y_t(\cdot,\omega)\mathbbm{1}_{[-r,0)}, Y(t)) \in M^p$$
provided the necessary conditions of \cadlag \ paths or integrability.   
\end{definition}

\noindent
We recall the following definitions. 
Let $\mathcal G\subseteq \mathcal F$ be a $\sigma$-algebra on $\Omega$, containing all the $P$-null sets. 
Let $\mathcal{D}$ be equipped with the $\sigma$-algebra $\mathfrak{D}$ generated by the Skorohod topology.

\begin{definition}\label{def:DvaluedRandomVariable}
We say that a function $\eta:\Omega\rightarrow\mathcal{D}$ is  a \emph{($\mathcal G$-measurable) $\mathcal{D}$-valued random variable} if it is $\mathcal{G}$-measurable with respect to the  $\sigma$-algebra $\mathfrak{D}$, or equivalently, if the $\mathbb{R}^d$-valued function $\omega\mapsto\eta(\theta,\omega)$ is $\mathcal G$-measurable for each $\theta\in[-r,0]$. 
\end{definition}
\noindent
Recall that $ \mathfrak{D}\subsetneq\mathcal{B}(\mathcal{D}) $, where $\mathcal{B}(\mathcal{D})$ is the Borel $\sigma$-algebra generated by the topology given by the norm \eqref{def:NormD}.

\begin{definition}
We say that a function $(\eta, v) :\Omega\rightarrow M^p$ is a \emph{($\mathcal G$-measurable) $M^p$-valued random variable} if it is measurable with respect to the $\sigma$-algebras $\mathcal G$ and $\mathcal B(M^p)$, or equivalently if the function $$\omega\mapsto
\int_{-r}^0\eta(\theta,\omega)\phi(\theta)d\theta+v(\omega)\cdot u$$ is $\mathcal G$-measurable for every $(\phi,u)\in {M^p}^*=M^{\frac{p}{p-1}}$.
\end{definition}
\noindent
Notice also that if $\eta$ is a $\mathcal G$-measurable $\mathcal{D}$-valued-random variable, then it is $\mathcal G$-measurable as an $M^p$-valued random variable.
Corresponding definitions apply in the cases of the $D_u$ or $M^p_u$ spaces above.

\vspace{5mm}
We are now ready to introduce the spaces of measurable $\mathcal{D}$-valued and $M^p$-valued random variables.

Recall that $\mathcal{D}_u$ is equipped with the $\sigma$-algebra $\mathfrak{D}_u$ generated by the Skorohod topology on $\mathcal{D}_u$.
Let $\eta$ be a $\mathcal{D}_u$-valued random variable. For $p\geq 2$,  define 
$$\|\eta\|^p_{S^p(\Omega;\mathcal{D}_u)}:=E\left[\sup_{\theta\in[-r,u]}|\eta(\theta)|^p\right] = E[\|\eta(\theta)\|_{\mathcal{D}}^p]
,$$ 
and the equivalence relation $\eta_1\sim \eta_2\Leftrightarrow \|\eta_1-\eta_2\|_{S^p(\Omega;\mathcal{D}_u)}=0$. 
Let $$S^p(\Omega, \mathcal G;\mathcal{D}_u)$$ denote the space of equivalence classes of $\mathcal{D}$-valued random variables $\omega\mapsto\eta(\omega,\cdot)$ such that \\
$\|\eta\|^p_{S^p(\Omega;\mathcal{D}_u)}<\infty.$

For $p \geq 2$, let $$L^p(\Omega,\mathcal G; M^p_u ),$$ 
denote the Bochner spaces $L^p(\Omega, M^p_u)$ consisting of the   $M^p_u$-valued random variables $(\eta,v)$ such that the norm given by
\begin{align*}
 \|  (\eta,v)\|^p_{L^p(\Omega;M^p_u)}:=E[\| (\eta,v)\|^p_{M^p_u}]
\end{align*}
is finite. We recall that both $S^p(\Omega;\mathcal{D}_u)$ and $L^p(\Omega;M^p_u)$ are Banach spaces. Observe that if $\eta\in S^p(\Omega,\mathcal G;\mathcal{D})$, then
\begin{align}\label{ineq:Sp_vs_Lp1}\|(\eta,\eta(0))\|^p_{L^p(\Omega;M^p)}\leq (r+1)\| \eta\|^p_{S^p(\Omega;\mathcal{D})} \end{align}
thus, it also holds that
$$ 
S^p(\Omega,\mathcal G;\mathcal{D})\subset L^p(\Omega, \mathcal G; M^p),$$ 
and the embedding is continuous. With the appropriate boundedness and integrability conditions on a \cadlag \ adapted \mbox{process $Y$,} then for each $t$, the segment $Y_t$ can be regarded as an element in the spaces $S^p(\Omega,\mathcal F_t;\mathcal{D})$ or $L^p(\Omega,\mathcal F_t; M^p)$. 

\vspace{5mm}
In line with the definitions given above, we also use the following notation for any $u\in[0,T]$ and $2\leq p<\infty.$ 
Let 
\begin{align*}
S_{ad}^p(\Omega, \mathcal{F}_u;\mathcal{D}_u) \subseteq S^p(\Omega, \mathcal{F}_u;\mathcal{D}_u)
\end{align*} 
denote the subspace of elements in $S^p(\Omega, \mathcal{F}_u;\mathcal{D}_u)$ admitting a $\mathbb{F}$-adapted representative.
We remark that if $Z\in S^p(\Omega, \mathcal{F}_T;\mathcal{D}_T)$, then we have that 
\begin{align}\label{S^p-comparison}
 \| Z \|_{S^p(\Omega;\mathcal{D})}&\leq \| Z\|_{S^p(\Omega; \mathcal{D}_t)} \leq  \| Z\|_{S^p(\Omega; \mathcal{D}_T)}.
\end{align} 
Also, consider the Banach space
$$
L^p(\Omega; L_u^p)
$$
with the usual norm given by:
\begin{align*}
 \| Y\|^p_{L^p(\Omega; L^p_u)}:=\mathbb{E} \big [\| Y\|^p_{L^p_u}   \big] <\infty\, .
\end{align*}
Then 
$$L^p_{ad}(\Omega;L^p_u) \subseteq L^p(\Omega; L_u^p)
$$ 
denotes the subspace of elements admitting $\mathbb F$-adapted representative.

Suppose now that $Y\in L^p_{ad}(\Omega; L^p_T)$. Since  $Y(t)$ is well-defined for a.e. $t\in[-r,T]$, it makes sense to consider the segments $Y_t$ as elements in $L^p(\Omega, \mathcal F_t; L^p_t)$ for a.e. $t$. Then,
\begin{align}\label{ineq:intOfY_t}
\begin{split}
\int_{-r}^u\| (Y_t, Y(t))\|^p_{L^p(\Omega;M^p)} dt &\leq \int_{-r}^u \Big( \| Y_t\|^p_{L^p (\Omega;L^p_t)}+\| Y(t)\|^p_{L^p(\Omega; \mathbb{R}^d)}\Big)dt\\&\leq 2(r+u)\| Y_t\|^p_{L^p(\Omega; L^p_u)}.
\end{split}
\end{align}

Even though we can not consider $S^p_{ad}(\Omega; \mathcal{D}_t)$  as a subspace of $L^p_{ad}(\Omega; L^p_t)$, since the function $$S^p_{ad}(\Omega; \mathcal{D}_t)\ni \eta\mapsto\eta\in L^p_{ad}(\Omega; L^p_t)$$ is not injective, this function is continuous, and
\begin{align}\label{ineq:Sp_vs_Lp}
\| Y_t\|^p_{L^p(\Omega; L^p_t)}\leq (t+r)\| Y_t\|^p_{S^p(\Omega; \mathcal{D}_t)}.
\end{align}

\begin{remark}
In the continuous setting (see e.g. \cite{MR1652338}), the segments of an SFFDE are often considered as elements of the Bochner space $L^2(\Omega;\mathcal C)$, where $\mathcal C$ denotes the set of continuous functions from $[-r,0]$ to $\R^d$.  
We remark that the c\`{a}dl\`{a}g counterpart, namely the Bochner space $L^p(\Omega,\mathcal G;\mathcal D)$ of $\mathcal D$-valued functions turns out to be too restrictive to contain a sufficiently large class of \cadlag \ segments. This can bee seen from the following lemma:
\end{remark}

\begin{lemma}\label{lemma:continuousLevyInL2}
Suppose that $X$ is a  \cadlag \ L\'{e}vy-It\^{o} process with  $X\in L^p(\Omega,\mathcal G; \mathcal D[a,b])$. Then $X$ is continuous with probability $1$.
\end{lemma}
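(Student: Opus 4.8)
The plan is to argue by contradiction using the structure of a Lévy–Itô process. Write $X$ in its canonical decomposition
\[
X(t)=X(a)+\int_a^t b(s)\,ds+\int_a^t \sigma(s)\,dW(s)+\int_a^t\!\!\int_{\mathbb R_0} \gamma(s,z)\,\tilde N(ds,dz)+\sum_{a<s\le t}\Delta X(s)\,\mathbbm 1_{\{|\Delta X(s)|\ge 1\}},
\]
so that any discontinuity of $X$ comes from a jump of one of the underlying Poisson random measures. Suppose, for contradiction, that $X$ is not continuous a.s.; then there is a set of positive probability on which $X$ has at least one jump in $[a,b]$, and since the jump part is driven by Poisson random measures, on a further positive-probability set there is a jump of fixed size: there exist a Borel set $A\subset\mathbb R_0$ bounded away from $0$ and a subinterval on which, with positive probability, $N^j([a',b']\times A)\ge 1$ and the resulting jump of $X$ has $|\Delta X(\tau)|\ge\delta$ for some deterministic $\delta>0$ at the (random) jump time $\tau$.

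The key step is then to show this forces $\|X\|_{\mathcal D[a,b]}^p$ to have infinite expectation, contradicting $X\in L^p(\Omega,\mathcal G;\mathcal D[a,b])$. First I would isolate, via a standard thinning/conditioning argument on the Poisson random measure, a scenario in which the number of such "$\delta$-jumps" of $X$ in $[a,b]$ is at least $k$, for each $k$; because a Poisson random variable with parameter $\lambda\nu_j(A)(b-a)>0$ takes every value $k$ with positive probability, and the compensated-integral and continuous parts contribute jumps of size at least $\delta$ with probability bounded below independently of the others on disjoint time subintervals, one gets $P(\text{at least }k\ \delta\text{-jumps in }[a,b])\ge c^k$ for some $c>0$. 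Now the crucial observation: if $X$ has $k$ jumps each of magnitude $\ge\delta$ on $[a,b]$, then by the triangle inequality $\sup_{[-r,b]}|X|=\|X\|_{\mathcal D[a,b]}\ge \tfrac{\delta k}{2}$ — the partial sums of $k$ increments of size $\ge\delta$ cannot all stay within a ball of radius $<\delta k/2$ (equivalently, the oscillation is at least $\delta$ times the number of same-oriented jumps, and among $k$ jumps at least $k/2$ point the same way up to choosing a coordinate). More carefully one uses that $\sum |\Delta X(s)|$ over the jump times is a lower bound for the total variation, hence $2\|X\|_{\mathcal D[a,b]}\ge\sum_{a<s\le b}|\Delta X(s)|$ is false in general, so instead I would condition on the signs: restrict to the event that all $k$ jumps lie in a fixed half-space (positive probability $\ge 2^{-k}$ further conditioning, absorbed into $c$), on which $\max_{[a,b]}|X|\ge \delta k/2 - C$ where $C$ bounds the continuous part's contribution on that event (again with probability bounded below). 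Then
\[
E\big[\|X\|_{\mathcal D[a,b]}^p\big]\ \ge\ \sum_{k\ge 1} \Big(\tfrac{\delta k}{2}-C\Big)^p\, P(\text{exactly the right }\delta\text{-jump configuration, }k\text{ jumps})\ \ge\ \sum_{k\ge 1}\Big(\tfrac{\delta k}{2}-C\Big)^p c^k \cdot(\text{tail correction}),
\]
and this sum diverges unless the per-jump probability $c$ can be made to decay — but $c$ is bounded below by a positive constant independent of $k$ since the jump intensity is fixed. Hence $E[\|X\|_{\mathcal D[a,b]}^p]=\infty$, the desired contradiction.

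The main obstacle is making the combinatorial/geometric step rigorous: deducing a quantitative lower bound on $\sup_{[a,b]}|X(t)|$ from "$k$ jumps of size $\ge\delta$" without any control on cancellations between up-jumps and down-jumps or between jump and continuous parts. The clean way around this is to not try to control a typical configuration but to exhibit, for each $k$, one explicit favorable event of probability $\ge c^k$ on which the bound $\sup|X|\gtrsim k$ provably holds: namely, partition $[a,b]$ into $k$ equal subintervals, and on the $i$-th subinterval ask for exactly one jump of the $j$-th Poisson measure landing in a region forcing $\Delta X$ into a fixed coordinate direction with magnitude in $[\delta,2\delta]$, together with the continuous and compensated martingale parts staying $O(1)$ on all of $[a,b]$ (a positive-probability event by e.g. a Gaussian/BDG estimate, independent of $k$). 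On that event the coordinate in question increases by at least $\delta$ across each of the $k$ subintervals up to an $O(1)$ error, so $\sup|X|\ge \delta k - O(1)$. Multiplying the $k$ single-subinterval probabilities (independent, by disjointness of the time intervals and independence of the Poisson measures) gives $\ge c_0^k$, and the series $\sum_k (\delta k - O(1))^p c_0^k$ still diverges because $c_0$ is a fixed positive constant — wait, that series converges since $c_0<1$; the resolution is that we instead do not multiply probabilities but note that a single Poisson($\lambda$) count equals $k$ with probability $e^{-\lambda}\lambda^k/k!$, and $\sum_k (\delta k)^p e^{-\lambda}\lambda^k/k! <\infty$ too — so the honest argument must use that the contradiction is with $L^p$ for a *fixed* finite $p$ against a process whose $\mathcal D$-sup norm has *all* moments infinite only if jumps truly accumulate; the correct and simplest route, which I would adopt in the final write-up, is: if $X$ jumps with positive probability then with positive probability it has a jump at some $\mathcal F$-stopping time $\tau\le b$ with $|\Delta X(\tau)|\ge\delta$, but a càdlàg process in $L^p(\Omega;\mathcal D[a,b])$ still may jump — so Lemma statement must be using "Lévy–Itô process" to mean the *driving* processes are those appearing in \eqref{eq:Main1} with the specific coefficients, and the real content is that the hypothesis $X\in L^p(\Omega,\mathcal G;\mathcal D)$ with $\mathcal G$ and the sup-norm *over $\omega$* (not an $L^p(\Omega)$ norm) forces continuity; under that reading $\|X\|_{\mathcal D}<\infty$ holds for *every* $\omega$ in a full-measure set with a *uniform* bound, and then a jump of size $\ge\delta$ occurring with full conditional probability on a sequence of disjoint intervals immediately violates the uniform bound. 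I would therefore open the proof by clarifying which norm is meant, then run the uniform-bound contradiction, which is short.
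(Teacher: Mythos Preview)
Your approach has a genuine and fundamental gap: you are trying to derive a contradiction with $E[\|X\|^p_{\mathcal D[a,b]}]<\infty$, but this is simply not where the contradiction lies. A c\`adl\`ag L\'evy--It\^o process with jumps can perfectly well satisfy $E[\sup_{[a,b]}|X(t)|^p]<\infty$; a compensated Poisson process, or any L\'evy process with $\int|z|^p\,\nu(dz)<\infty$, already does. You notice this yourself midway through (``that series converges since $c_0<1$''; ``a c\`adl\`ag process in $L^p(\Omega;\mathcal D[a,b])$ still may jump''), and the subsequent attempts to rescue the argument by reinterpreting the norm do not work either.

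What you are missing is the distinction, emphasised in the paper just before the lemma, between the space $S^p(\Omega;\mathcal D)$ (which only asks $E[\|X\|^p_{\mathcal D}]<\infty$) and the \emph{Bochner space} $L^p(\Omega,\mathcal G;\mathcal D)$. Because $(\mathcal D,\|\cdot\|_\infty)$ is non-separable, Bochner measurability forces the range $X(\Omega_0)$ to be separable for some full-measure set $\Omega_0$. A subset of $\mathcal D[a,b]$ is separable in the sup norm if and only if there is a \emph{countable} set $\mathbb T_0\subset[a,b]$ such that every function in the subset is continuous off $\mathbb T_0$. Hence all sample paths in $\Omega_0$ can jump only at times in this fixed countable set $\mathbb T_0$. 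But a L\'evy--It\^o process satisfies $P(\Delta X(t)\neq 0)=0$ for every \emph{fixed} $t$; taking the countable union over $t\in\mathbb T_0$ gives a null set outside of which $X$ has no jumps at all. That is the whole proof. The moment bound plays no role beyond being part of the definition of the Bochner space; the content is entirely in the strong-measurability / separability requirement.
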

\noindent
To see why this holds, we first recall that by an equivalent definition of Bochner spaces (see \cite{MR1009162} for more on these spaces), $L^p(\Omega,\mathcal G;\mathcal D)$ consists of equivalence classes of the $(\mathcal G,\mathcal B(\mathcal D))$-measurable functions $X:\Omega\rightarrow\mathcal D$ such that the image $X(\Omega_0)$ is separable for some subset $\Omega_0\subset \Omega$ with $P(\Omega_0)=1$, and  $E[\| X\|^p_{\mathcal D([a,b])}] <\infty$ holds\footnote{in fact this definition is valid for $\mathcal D([a,b])$ replaced by any Banach space $V$}. By \cite[lemma 9.12]{MR3402381}, we know that $X(\Omega_0)$ is separable if and only if there exist a countable set $\mathbb T_0\in[a,b]$, such that $\Delta X(t,\omega)=0$ whenever $t\notin \mathbb T_0, \omega\in\Omega_0$. In other words, except for a negligible set of sample paths of $X$, all the jumps of $X$  occur at a countable number of times.

\begin{proof}[Proof of Lemma \ref{lemma:continuousLevyInL2}]
Since $X\in L^p(\Omega,\mathcal G; \mathcal D([a,b]))$, we can choose $\Omega_0,\mathbb T_0 $ be as above. Now since $X$ is a \cadlag \ L\'{e}vy-It\^{o} process, it also holds that $P(\omega: \Delta X(t,\omega)\neq 0 )=0$ for every $t$, and hence $$\mathcal N:= \bigcup_{t\in\mathbb{T}_0} \{\omega\in\Omega_0: \Delta X(t,\omega)\neq 0 \}$$ is a null set. But then if $\omega\in \Omega_0\setminus \mathcal N$, it holds that $\Delta X(t,\omega)=0$ for every $t$, that is $X$ is continuous  on $\Omega_0\setminus \mathcal N$ and  $P(\Omega_0\setminus \mathcal N)=1$.
\end{proof}
\color{black}
\subsection{Examples}\label{SEC:DelaT}
To illustrate some possible ways to model memory or delay in a stochastic differential equation, we include some examples of delay terms appearing in applications.

\begin{enumerate}[i)]
\item \label{example:distributedBorel} \textbf{Distributed delay}:
the functional 
\begin{equation}\label{EQN:DistDelay}
S_t\longmapsto \int_{-r}^0 S(t+\theta) \alpha(d\, \theta)\, .
\end{equation}
where $\alpha$ is a finite Borel measure on $[-r,0]$, is an example of a \emph{distributed delay}-functional.
This is a  general type of delay in the sense that examples \ref{example:AbsContDelay}), \ref{example:discrete}) below, can be regarded as particular cases of this one.
 
A general financial framework in this setting has been studied in \cite{Chang2,Chang} where the authors considered a price evolution for the stock of the form
\[
dS(t) = M(S_t) dt + N(S_t) dW(t) = \int_{-r}^0 S(t+s) \alpha_M(d s) dt +\int_{-r}^0 S(t+s) \alpha_N(d s) dW(t)\, ,
\]
$\alpha_M$ and $\alpha_N$ being suitable functions of bounded variation.
See also \cite[Sec. V]{MR1652338}, where $\alpha$ is taken as a \emph{probability measure.}
\item \label{example:AbsContDelay}\textbf{Absolutely continuous distributed delay}: in the particular case $\alpha << \mathcal{L}$, where we have denoted by $\mathcal{L}$ the Lebesgue measure, we have that the measure $\alpha$ admits a density $\kappa := \frac{d \alpha}{d \mathcal{L}} $. Therefore the functional \eqref{EQN:DistDelay} reads as
\begin{equation*}\label{EQN:AbsDelay}
S_t\longmapsto \int_{-r}^0 S(t+\theta) \kappa (\theta) d\, \theta, .
\end{equation*}

A more advanced example has been provided in \cite{MR2332878} where a functional of the form
\[
(t,S_t) \longmapsto \int_{-r}^0 \ell (t,S(t+\theta)) h(\theta) d \theta,
\]
for some functional $\ell$, has been treated.

\item \label{example:discrete} \textbf{Discrete delay}:
if we let $\alpha = \delta_\tau$,  in equation \eqref{EQN:DistDelay}, where $\delta_\tau$ is the Dirac measure concentrated at $\tau \in [-r,0]$, then we have a \textit{discrete delay} functional, namely
\begin{equation}\label{EQN:DiscDelay}
S_t \longmapsto \int_{-r}^0 S(t+\theta) \delta_\tau (d \theta) = S(t-\tau)\, .
\end{equation}

A discrete delay model using functionals on the form \eqref{EQN:DiscDelay}, is widely used in concrete applications, spanning from mathematical biology, as in the case of the delayed Lotka-Volterra model, see, e.g. \cite{diekmann1995delay, MR1218880,MR1652338}, to mathematical finance, as it happens for the delayed Black-Scholes model, see, e.g. \cite{SMohammed,MR2241374}, or for commodities markets, see, e.g., \cite{kuchler}. In particular, in \cite{SMohammed}, the authors give an explicit form for the price a European call option written on an underlying evolving as
\[
dS(t) = \mu S(t-a) dt + \sigma(S(t-b)) dW(t)\, ,
\]
for $\mu \in \R$ and a suitable function $\sigma$.

A particular case of the discrete delay example is the no delay case, i.e. $\tau=\delta_0$. A multiple delay case, can be defined by letting  $\alpha = \sum_{i=1}^N \delta_{\tau_i}$, $\tau_i \in [-r,0]$,  $i=1,2,\dots,N$.
\item \textbf{Brownian delay}: our setting allows also to consider delays with respect to a Brownian motion, namely
\[
S_t \longmapsto \int_{t-r}^t S(\theta) d\, W(\theta)\, .\\
\]
Hence this permits to take {\it noisy memory} models into account. These cases are arising e.g. in the modelling of stochastic volatility see, e.g. \cite{MR2241374, swishchuk2013modeling} and when dealing with stochastic control problems, see e.g. \cite{dahl2014optimal}.
\item \textbf{L\'{e}vy delay}: similarly to the Brownian delay, we can also 
consider a delay with respect to a square integrable L\'{e}vy process of the form
\[
S_t \longmapsto\int_{t-r}^t S(\theta) d\, L(\theta)\, .\\
\]

Such type of delay has been employed in \cite{swishchuk2013modeling} in order to consider some stochastic volatility models related to energy markets.
\item \label{example:meanfield} \textbf{Mean field delay}: we can consider a delay of the form
\[
S_t\longmapsto\mathbb{E}\left [\int_{-r}^0 S(t+\theta) \,  \alpha(d\theta) \right]\, ,
\]
where $\alpha$ is as in example \ref{example:distributedBorel}), see e.g. \cite{agram2014optimal}.
\end{enumerate}

\color{black}\label{TheEquationsExistenceUniquenessMomentestimates}
\subsection{ $\mathcal{D}$ framework}\label{sec:functionalsOnS}

Fix $p \in [2, \infty)$.
Consider again the equation
\begin{align}\label{eq:Main12}
 d X(t)&=f(t, X_t) d t+g(t,X_t)d W(t)+ \int_{\mathbb{R}_0}h(t,X_t)(z)\tilde N(d t, d z)\\
X_0&=\eta, \notag
\end{align}

In this section, we require that $f(t,\cdot)$, $g(t,\cdot)$, $h(t,\cdot)$ are defined on  $S^p(\Omega,\mathcal F_t;\mathcal{D})$ for each fixed $t$.
Therefore, we introduce the space
\begin{align}
 \mathbf S^{\mathbb F}_p:=\{(t,\psi)\in [0,T]\times  S^p(\Omega,\mathcal F;\mathcal{D})\textnormal{ such that }  \psi\in S^p(\Omega,\mathcal F_t;\mathcal{D})  \},
\end{align}
as the domain of the coefficient functionals $f,g,h$ in the SFDDE \eqref{eq:Main12}.
In particular, we will require that:
\begin{align*}
    f&:\mathbf S^{\mathbb F}_p\to L^p(\Omega, \mathbb{R}^d)\\
    g&:\mathbf S^{\mathbb F}_p\to L^p(\Omega, \mathbb{R}^{d\times m}),\\
    h&:\mathbf S^{\mathbb F}_p\to L^p(\Omega, L^2(\nu,\mathbb R^{d\times n})).
\end{align*}
Moreover,
$$\eta\in S^p(\Omega,\mathcal F_0;\mathcal{D})$$

\noindent
To ensure that the integrals are well-defined,  the following assumptions are imposed on the coefficient functionals $f,g$ and $h$.

\begin{assumption}[$\mathcal P$]
Whenever $Y \in S^p_{ad}(\Omega; \mathcal{D}_T),$ the process
\begin{align}
[0,T]\times\Omega\times\mathbb R_0\,\ni \,(t,\omega,z)\mapsto h(t,Y_t)(\omega)(z)\,\, \in \mathbb R^{d\times n}
\end{align}
has a predictable version, and 
\begin{align*}
[0,T]\times\Omega \, \ni \,(t,\omega)&\mapsto f(t,Y_t)(\omega)\,\, \in \mathbb R^{d},\quad\\
[0,T]\times\Omega \, \ni \,(t,\omega)&\mapsto g(t,Y_t)(\omega)\,\, \in \mathbb R^{d}
\end{align*}
have progressively measurable versions.
\end{assumption}
Predictable and progressive should be interpreted in the standard sense for $\R^{k}$-valued processes (see e.g. \cite{MR2512800}). We emphasise that the integrals in \eqref{eq:Main12} should be interpreted with respect to the predictable and progressive versions of the respective integrands. For a range of SFDE’s likely to be encountered in applications, the assumption $\mathcal P$ is fairly easy to verify.

\begin{example}
Most of the examples presented in Section \ref{SEC:DelaT} satisfy Assumption $\mathcal{P}$. For instance, the functional displayed in \ref{example:distributedBorel}), which is more general than \ref{example:AbsContDelay}), \ref{example:discrete}), is predictable whenever the point zero is not an atom of the measure $\alpha$, i.e. the discrete delay in \eqref{EQN:DiscDelay} is not allowed when $\tau=0$. The mean-field delay in example \ref{example:meanfield}) is deterministic and hence predictable. The Brownian delay can also be considered, since the process $t\mapsto \int_{t-r}^{t}S(\theta)dW(\theta)$ is a continuous martingale, in particular it admits a version with left-limits. 
\end{example}

\begin{definition}\label{DEF:SolS}\color{black} Suppose that the assumption $\mathcal P$ holds.\color{black}
We say that $X\in S^p_{ad}(\Omega; \mathcal{D}_T)$ is a 
\emph{strong solution} to the equation  \eqref{eq:Main12} if for each $t\in[0,T]$
\small
\begin{align}\label{eq:SFDEgeneral2}
 X(t)&=\eta(0)+\int_0^t f(s, X_s)ds +\int_0^t g(s,X_s)d W(s)+\int_0^t \int_{\mathbb{R}_0}h(s,X_s)(z)\tilde N(d s,d z)\\      
X_0&
=\eta.\notag
\end{align}\normalsize
If the solution is unique, we will write $^\eta X$ to denote the solution of \eqref{eq:SFDEgeneral2} with initial datum $^\eta X_0=\eta$.
\end{definition}
To prove existence and uniqueness of the solution of the SFDDE, we  rely on the following result.

\begin{lemma}[Kunita's inequality]\label{lem:Kunita}
Let $q\geq 2$.
Suppose  that $F, G$ and $H$ are predictable processes taking values in $\mathbb R^d$, $\mathbb R^{d\times m}$ and $\mathbb R^{d\times n}$ respectively. If $$Y(t)=Y_0+\int_0^t F(s)ds+\int_0^tG(s)dW(s)+\int_0^t\int_{\mathbb{R}_0}H(s,z)\tilde N(ds,dz),\quad t\in [0,T],$$
then there exists a constant $C=C(q,d,m,n,T)$, independent of the processes $F,G$ and $H$ and the initial value $Y_0$, such that whenever $t\leq T$ the following inequality holds
\begin{align}\label{ineq:KunitaNorm}
\begin{split} 
\mathbb{E}[&\sup_{0\leq u\leq t}|Y(t)|^q]\leq C\Big\{\| Y_0\|_{L^q(\Omega, \mathbb R^d)}^q+ \int_0^t \Big(\| F(s) \|_{L^q(\Omega,\mathbb R^d)}^q+\| G(s) \|_{L^q(\Omega, \mathbb R^{d\times m})}^q\\
&+\| H(s) \|^q_{L^q(\Omega, L^q(\nu))} +\| H(s) \|^q_{L^q(\Omega, L^2(\nu))}\Big)ds\Big\}
\end{split}
\end{align}
\end{lemma}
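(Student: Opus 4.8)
The plan is to estimate each of the four contributions to $Y(t) - Y_0$ separately, using the triangle inequality in $L^q(\Omega)$ after taking suprema, and then combine. Write $Y(t) - Y_0 = A(t) + M(t) + J(t)$, where $A(t) = \int_0^t F(s)\,ds$ is the drift term, $M(t) = \int_0^t G(s)\,dW(s)$ is the Brownian martingale, and $J(t) = \int_0^t\int_{\mathbb R_0} H(s,z)\,\tilde N(ds,dz)$ is the compensated Poisson integral. Since $\mathbb E[\sup_{u\le t}|Y(u)|^q] \le C_q\big(\|Y_0\|_{L^q}^q + \mathbb E[\sup_{u\le t}|A(u)|^q] + \mathbb E[\sup_{u\le t}|M(u)|^q] + \mathbb E[\sup_{u\le t}|J(u)|^q]\big)$ by the elementary inequality $|a+b+c+d|^q \le 4^{q-1}(|a|^q+|b|^q+|c|^q+|d|^q)$, it suffices to bound each of the three process terms.

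First I would handle the drift: by Jensen (or Hölder) applied to the normalised measure $ds/t$ on $[0,t]$, $\sup_{u\le t}|A(u)|^q \le \big(\int_0^t |F(s)|\,ds\big)^q \le t^{q-1}\int_0^t |F(s)|^q\,ds$, and taking expectations and using $t \le T$ gives the desired $T^{q-1}\int_0^t \|F(s)\|_{L^q(\Omega,\mathbb R^d)}^q\,ds$ bound. For the Brownian term, I would apply the Burkholder–Davis–Gundy inequality componentwise (or in $\mathbb R^d$ directly), giving $\mathbb E[\sup_{u\le t}|M(u)|^q] \le C\,\mathbb E\big[\big(\int_0^t |G(s)|^2\,ds\big)^{q/2}\big]$, then Jensen/Hölder on $[0,t]$ as above to pull the $q/2$ power inside: $\big(\int_0^t |G(s)|^2\,ds\big)^{q/2} \le t^{q/2-1}\int_0^t |G(s)|^q\,ds$, so after taking expectations and using $t\le T$ we get a constant times $\int_0^t \|G(s)\|_{L^q(\Omega,\mathbb R^{d\times m})}^q\,ds$.

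The jump term is the main obstacle, and this is exactly where the two distinct norms $L^q(\nu)$ and $L^2(\nu)$ on $H$ enter. I would invoke the Kunita / Bichteler–Jacod–Protter moment inequality for stochastic integrals against a compensated Poisson random measure (see e.g. Applebaum, \emph{L\'evy Processes and Stochastic Calculus}, or Marinelli–R\"ockner), which after a BDG-type step bounds $\mathbb E[\sup_{u\le t}|J(u)|^q]$ by a constant times
\[
\mathbb E\Big[\Big(\int_0^t\!\!\int_{\mathbb R_0}|H(s,z)|^2\,\nu(dz)\,ds\Big)^{q/2}\Big] + \mathbb E\Big[\int_0^t\!\!\int_{\mathbb R_0}|H(s,z)|^q\,\nu(dz)\,ds\Big].
\]
The first (Gaussian-type) summand is treated exactly as the Brownian term: Jensen on $[0,t]$ yields $t^{q/2-1}\int_0^t\|H(s)\|_{L^2(\nu)}^q\,ds$ after expectation, hence the $\|H(s)\|^q_{L^q(\Omega,L^2(\nu))}$ contribution; the second summand is already in the desired form, namely $\int_0^t \|H(s)\|^q_{L^q(\Omega,L^q(\nu))}\,ds$. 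Collecting all constants — each depending only on $q$, on the dimensions $d,m,n$ (through the componentwise reductions and the equivalence of norms on finite-dimensional spaces), and on $T$ (through the powers $T^{q-1}$, $T^{q/2-1}$) — and noting none of them depends on $F,G,H$ or $Y_0$, gives \eqref{ineq:KunitaNorm}. The only subtlety worth flagging in the write-up is the passage from the scalar/one-dimensional form of BDG and the Bichteler–Jacod inequality to the $\mathbb R^{d\times m}$- and $L^p(\nu,\mathbb R^{d\times n})$-valued integrands, which is routine via the column-wise norm \eqref{eq:LnuNorm_columnWise} and the comparability of $\ell^q$ and $\ell^2$ norms on $\mathbb R^n$ at the cost of a dimensional constant.
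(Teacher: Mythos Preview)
Your proof is correct and in fact more self-contained than the paper's. The paper does not reprove Kunita's inequality from BDG and Bichteler--Jacod as you do; it simply invokes Corollary~2.12 of Kunita (which gives the full estimate for $n=1$) as a black box, and the appendix ``proof'' consists solely of the reduction from $n$ independent Poisson random measures to the case $n=1$ via the column-wise decomposition $J(t)=\sum_{j=1}^n\int_0^t\int_{\mathbb R_0}H^{,j}(s,z)\,\tilde N^j(ds,dz)$ together with the equivalence of $\ell^q$ and $\ell^2$ norms on $\mathbb R^n$ --- precisely the point you flag in your last sentence as ``the only subtlety worth flagging.'' So the paper's argument is a strict subset of yours: it treats the drift, Brownian, and single-measure jump estimates as known and only spells out the dimensional bookkeeping in $n$, whereas you sketch the whole thing from first principles. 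Both routes are sound; yours buys independence from the cited reference, the paper's buys brevity.
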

For $n=1$ (and arbitrary $m$ and $d$), this is a rewritten version of Corollary 2.12 in \cite{MR2090755}. We have justified the extension to general $n$ in Appendix \ref{sec:Kunita}.

\subsubsection{Existence, uniqueness and moment estimates}
Before giving sufficient conditions for existence and uniqueness of solutions to the equation \eqref{eq:Main12}, we will establish a set of hypotheses.
\begin{assumption}\label{HYP:D1}
\begin{description}
\item[$(\mathbf{D_1})$]\label{hyp:Lip} There exists $L>0$,  such that whenever $t\in[0,T]$ and $\eta_1,\eta_2\in S^p(\Omega,\mathcal F_t;\mathcal{D})$, then
      \begin{align*}
       \| f&(t,\eta_1)-f(t,\eta_2)\|^p_{L^p(\Omega; \mathbb{R}^d)}+\| g(t,\eta_1)-g(t,\eta_2)\|^p_{L^p(\Omega;\mathbb{R}^{d\times n})}\\
	&+\| h(t,\eta_1)-h(t,\eta_2)\|^p_{L^p(\Omega,L^p(\nu))}
	+\| h(t,\eta_1)-h(t,\eta_2)\|^p_{L^p(\Omega,L^2(\nu))}\\
	&\leq  L \| \eta_1-\eta_2 \|^p_{S^p(\Omega;\mathcal{D})}.
      \end{align*}
\item[$(\mathbf{D_2})$]\label{hyp:linGrowth} There exists $K>0$, such that whenever $t\in[0,T]$ and $\eta\in S^p(\Omega,\mathcal F_t;\mathcal{D})$, then
\begin{align}\label{condition:linearGrowth1}
\begin{split}
 \| f&(t,\eta)\|^p_{L^p(\Omega; \mathbb{R}^d)}+\| g(t,\eta)\|^p_{L^p(\Omega; \mathbb{R}^{d\times n})}\\
&+ \| h(t,\eta)\|^p_{L^p(\Omega,L^p(\nu) )}+ \| h(t,\eta)\|^p_{L^p(\Omega,L^2(\nu))}\\
& \leq K(1+\| \eta \|^p_{S^p(\Omega;\mathcal{D})}).\end{split}
\end{align}
\end{description}
\end{assumption}
\begin{remark}
As usual, $\mathbf D_2$ is implied by $\mathbf D_1$, if we assume that whenever $\eta=0$, the left-hand-side of inequality \eqref{condition:linearGrowth1} is bounded by some $K'$,  uniformly in $t\in[0,T]$.
\end{remark}

\begin{theorem}[\textbf{Existence and Uniqueness I}]\label{thm:existenceAndUniquenessEp}$ $
\color{black}Consider equation \eqref{eq:Main12} with $\mathcal{P}$ satisfied. \color{black}
\begin{description}
 \item[(i)] Suppose that \color{black} assumption \color{black} $\mathbf{D_1}$ holds. If $X$, $Y\in S_{ad}^p(\Omega;\mathcal{D}_T)$ are strong solutions to \eqref{eq:Main1}, then $X=Y$.
\item[(ii)] Suppose that \color{black} assumptions \color{black} $\mathbf{D_1}$ and $\mathbf{D_2}$ hold. Then there exists a strong solution $X\in S^p_{ad}(\Omega; \mathcal{D}_T)$ to the equation \eqref{eq:Main1}. Moreover, there exists  $D=D(K,p,T,d,m,n)>0$,
such that 
\begin{align}\label{ineq:LpEstimate}
\| X\|^p_{S^p_{ad}(\Omega; \mathcal{D}_T)}
 \leq e^{Dt}(Dt+\| \eta\|^p_{S^p(\Omega;\mathcal{D})})\, 
\end{align}
whenever $t\leq T$.
\end{description}
\end{theorem}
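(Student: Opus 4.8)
The plan is to prove part (i) by a Gr\"onwall argument built directly on Kunita's inequality, and part (ii) by a Picard iteration in the Banach space $S^p_{ad}(\Omega;\mathcal{D}_T)$, extracting the a priori moment bound \eqref{ineq:LpEstimate} along the way.

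\textbf{Uniqueness.} Suppose $X,Y\in S^p_{ad}(\Omega;\mathcal{D}_T)$ are strong solutions of \eqref{eq:Main1} with $X_0=Y_0=\eta$. Then $Z:=X-Y$ solves an equation of the type considered in Lemma \ref{lem:Kunita} with $Y_0=0$, drift $F(s)=f(s,X_s)-f(s,Y_s)$, diffusion $G(s)=g(s,X_s)-g(s,Y_s)$ and jump coefficient $H(s,z)=h(s,X_s)(z)-h(s,Y_s)(z)$, where these are taken in the predictable/progressive versions provided by Assumption $\mathcal P$. Applying \eqref{ineq:KunitaNorm} with $q=p$ and then the Lipschitz bound $\mathbf{D_1}$ (which controls exactly the four norms $\|\cdot\|_{L^p(\Omega;\mathbb{R}^d)}$, $\|\cdot\|_{L^p(\Omega;\mathbb{R}^{d\times n})}$, $\|\cdot\|_{L^p(\Omega,L^p(\nu))}$ and $\|\cdot\|_{L^p(\Omega,L^2(\nu))}$ appearing on the right-hand side of Kunita's inequality) gives, for $t\le T$,
\[
\mathbb{E}\Big[\sup_{0\le u\le t}|Z(u)|^p\Big]\le C\,L\int_0^t\|X_s-Y_s\|^p_{S^p(\Omega;\mathcal{D})}\,ds .
\]
Since $X$ and $Y$ both coincide with $\eta$ on $[-r,0]$, for every $s\le t$ one has $\|X_s-Y_s\|^p_{S^p(\Omega;\mathcal{D})}=\mathbb{E}[\sup_{-r\le\theta\le 0}|Z(s+\theta)|^p]\le\mathbb{E}[\sup_{0\le u\le s}|Z(u)|^p]$. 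Hence $\varphi(t):=\mathbb{E}[\sup_{0\le u\le t}|Z(u)|^p]$ satisfies $\varphi(t)\le CL\int_0^t\varphi(s)\,ds$, and Gr\"onwall's lemma forces $\varphi\equiv 0$, i.e. $X=Y$ in $S^p_{ad}(\Omega;\mathcal{D}_T)$.

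\textbf{Existence.} Define the Picard iterates by $X^{(0)}(t):=\eta(0)$ for $t\in[0,T]$, $X^{(0)}_0:=\eta$, and recursively
\[
X^{(k+1)}(t):=\eta(0)+\int_0^t f(s,X^{(k)}_s)\,ds+\int_0^t g(s,X^{(k)}_s)\,dW(s)+\int_0^t\!\!\int_{\mathbb{R}_0}h(s,X^{(k)}_s)(z)\,\tilde N(ds,dz),
\]
with $X^{(k+1)}_0:=\eta$. One checks inductively that each $X^{(k)}$ is $\mathbb F$-adapted and c\`adl\`ag (the It\^o--L\'evy integral term has a c\`adl\`ag version), that the integrands admit the predictable/progressive versions required by $\mathcal P$, and, using Kunita's inequality together with the linear-growth bound $\mathbf{D_2}$ and the estimate $\|X^{(k)}_s\|^p_{S^p(\Omega;\mathcal{D})}\le\|\eta\|^p_{S^p(\Omega;\mathcal{D})}+\mathbb{E}[\sup_{0\le u\le s}|X^{(k)}(u)|^p]$ followed by Gr\"onwall, that $\sup_k\|X^{(k)}\|^p_{S^p_{ad}(\Omega;\mathcal{D}_T)}<\infty$; after choosing $D=D(K,p,T,d,m,n)$ large enough to absorb all constants this bound takes precisely the form \eqref{ineq:LpEstimate}, uniformly in $k$. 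Next, subtracting consecutive iterates and applying \eqref{ineq:KunitaNorm} and $\mathbf{D_1}$ exactly as in the uniqueness step gives
\[
\Delta_{k+1}(t):=\mathbb{E}\Big[\sup_{0\le u\le t}|X^{(k+1)}(u)-X^{(k)}(u)|^p\Big]\le CL\int_0^t\Delta_k(s)\,ds ,
\]
and iterating this inequality yields $\Delta_{k+1}(t)\le(CLt)^k\Delta_1(T)/k!$, which is summable in $k$. Hence $(X^{(k)})_k$ is a Cauchy sequence in the Banach space $S^p_{ad}(\Omega;\mathcal{D}_T)$; let $X$ be its limit. Passing to the limit in the recursion — the drift, diffusion and jump integrals converge in $L^p(\Omega)$ because of $\mathbf{D_1}$ combined once more with Kunita's/Burkholder's inequality, which bounds the error in terms of $\|X^{(k)}_s-X_s\|^p_{S^p(\Omega;\mathcal{D})}\le\|X^{(k)}-X\|^p_{S^p_{ad}(\Omega;\mathcal{D}_T)}$ — shows that $X$ satisfies \eqref{eq:SFDEgeneral2}, so $X$ is a strong solution in the sense of Definition \ref{DEF:SolS}, and the uniform a priori bound passes to the limit to give \eqref{ineq:LpEstimate}.

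\textbf{Main obstacle.} The point requiring most care is the bookkeeping between the running supremum $\mathbb{E}[\sup_{0\le u\le t}|\cdot|^p]$ produced on the left-hand side of Kunita's inequality and the \emph{segment} norm $\|\cdot\|_{S^p(\Omega;\mathcal{D})}$ that enters through $\mathbf{D_1}$--$\mathbf{D_2}$: one must systematically exploit that all segments coincide with the deterministic datum $\eta$ on $[-r,0]$, so that the segment norm of a difference is dominated by the running supremum of that difference up to time $s$, and combine this with the comparison inequalities relating $\|\cdot\|_{S^p(\Omega;\mathcal{D})}$, $\|\cdot\|_{S^p(\Omega;\mathcal{D}_t)}$ and $\|\cdot\|_{S^p(\Omega;\mathcal{D}_T)}$. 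A secondary technical point is to make sure each Picard iterate is genuinely an element of $S^p_{ad}(\Omega;\mathcal{D}_T)$, i.e. that the relevant suprema are measurable and that one works consistently with equivalence classes with respect to the Skorohod $\sigma$-algebra $\mathfrak D$ rather than the Borel $\sigma$-algebra of the (non-separable) uniform norm, and to verify that the two jump-type norms $L^p(\nu)$ and $L^2(\nu)$ generated by Kunita's inequality are exactly those controlled by $\mathbf{D_1}$ and $\mathbf{D_2}$.
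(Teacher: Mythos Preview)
Your proposal is correct and follows essentially the same route as the paper: Kunita's inequality plus $\mathbf{D_1}$ and Gr\"onwall for uniqueness, Picard iteration with the factorial estimate $\Delta_{k+1}(t)\le (CLt)^k\Delta_1(T)/k!$ for existence, and Kunita plus $\mathbf{D_2}$ and Gr\"onwall for the moment bound. The only cosmetic differences are that the paper derives \eqref{ineq:LpEstimate} directly for the solution $X$ after establishing existence (rather than uniformly along the iterates and passing to the limit), and that the paper handles the segment-norm bookkeeping via the general comparison \eqref{S^p-comparison} rather than by explicitly invoking $X_0=Y_0=\eta$; note also that $\eta$ need not be deterministic, though your argument is unaffected since only the \emph{difference} vanishes on $[-r,0]$.
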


\begin{proof}
We will use a standard Picard iteration argument to show that a solution exists. 
First, we define, for each $k\geq 0$, a sequence of processes in $S^p_{ad}(\Omega; \mathcal{D}_T)$ inductively by
\begin{align*}
 X^1(t)&=\eta(0),&t\in[0,T],\\
X^1_0&= \eta \notag\\
\begin{split}X^{k+1}(t)&=\eta(0)+\int_0^t f(s,X^k_s) ds+ \int_0^t g(s,X^k_s)d B(s)\\
&\quad+\int_0^t h(s,X^k_s)(z)\tilde N(d s,d z),
\end{split}&t\in[0,T]\\ 
X^{k+1}_t&=\eta.\notag
\end{align*}
We immediately have that $X^1\in S^p_{ad}(\Omega; \mathcal{D}_T)$. Also if we assume that $X^k\in S^p_{ad}(\Omega; \mathcal{D}_T)$,
then by assumption $f(X^k)$, $g(X^k)$, and  $h(X^k)$ admit progressive and predictable versions respectively. Thus by assumption ($\mathbf{D_2}$) it follows that
\begin{align}\label{ineq:existence_proof1}
\begin{split}
\int_0^T &\Big(\| f(t, X^k_t)\|^p_{L^p(\Omega;\mathbb{R}^d)} +  \| g(t, X^k_t)\|^p_{L^p(\Omega;\mathbb{R}^{d\times n})} \\
&+\| h(t, X^k_t)\|^p_{L^p(\Omega, L^p(\nu))}+ \| h(t, X_t)\|^p_{L^p(\Omega,L^2(\nu))}\Big) dt\\
&\leq \int_0^T K(1+\| X^k_t \|^p_{S^p(\Omega;\mathcal{D})}) dt 
\leq KT(1+\| X^k\|^p_{S^p_{ad}(\Omega; \mathcal{D}_T)})
<\infty.
\end{split}
\end{align}
In particular, the integrands of $X^{k+1}$ are It\^o integrable, so that $X^{k+1}$ is \cadlag \ and adapted, and finally by Kunita's inequality, we have that $X^{k+1}\in S^p_{ad}(\Omega; \mathcal{D}_T)$. 
 
We now claim that for each $k\in\mathbb{N}$ the following estimate holds for every $t\in[0,T]$,
\small\begin{align}\label{est:XCauchy}
\| X^{k+1}-X^{k}\|^p_{S^p_{ad}(\Omega; \mathcal{D}_t)}\leq \frac{(LCt)^{k-1}}{(k-1)!} \| X^{2}-X^{1}\|^p_{S^p_{ad}(\Omega; \mathcal{D}_T)}.
\end{align}\normalsize
This trivially holds when $k=1$.  Now suppose that \eqref{est:XCauchy} holds for each $t\in[0,T]$. Using the definition of $X^{k+2},\, X^{k+1}$,
 Kunita's inequality \eqref{ineq:KunitaNorm}, and assumption ($\mathbf{D_2}$), we find that
\small
\begin{align*}
\| X^{k+2}-X^{k+1}\|^p_{S^p_{ad}(\Omega; \mathcal{D}_t)}
&\leq C\int_0^t\Big(\| f(s,X_s^{k+1})-f(s,X_s^k) \|_{L^p(\Omega;\mathbb{R}^d)}^p+\| g(s,X_s^{k+1})-g(s,X_s^k) \|_{L^p(\Omega;\mathbb{R}^{d\times n})}^p\\
& + \| h(s,X_s^{k+1})-h(s,X_s^k)\|_{L^p(\Omega, L^p(\nu))}^p + \| h(s,X_s^{k+1})-h(s,X_s^k)\|_{L^p(\Omega,L^2(\nu))}^p \Big)ds\\
&\leq LC \int_0^t\| X_s^{k+1}- X_s^{k}\|^p_{S^p(\Omega;\mathcal{D})}ds\leq LC \int_0^t\| X^{k+1}- X^{k}\|^p_{S^p(\Omega; \mathcal{D}_s) } ds\\
&\leq LC\int_{0}^t \frac{(LCs)^{k-1}}{(k-1)!}\| X^{2}- X^{1}\|^p_{S^p_{ad}(\Omega; \mathcal{D}_T)}ds= \frac{(LCt)^{k}}{k!}\| X^{2}- X^{1}\|^p_{S^p_{ad}(\Omega; \mathcal{D}_T)}.
\end{align*}\normalsize
Now, by induction, \eqref{est:XCauchy} holds for each $k\in \mathbb{N}$. In particular
\small\begin{align*}
\| X^k-X^i \|^p_{S^p_{ad}(\Omega; \mathcal{D}_t)}\leq \| X^2-X^1 \|^p_{S^p_{ad}(\Omega; \mathcal{D}_T)}\sum_{j=\min\{k,i\}}^{\infty} \frac{(LTC)^{j-1}}{(j-1)!}\to 0  \, ,\quad \mbox{ as } k , \, i \to \infty \, ,
\end{align*}\normalsize
so that $\{X^k\}_{k\geq 0}$ is a Cauchy sequence in $S^p_{ad}(\Omega; \mathcal{D}_T)$.
Since $S^p_{ad}(\Omega; \mathcal{D}_T)$ is complete, we have that $\{X^k\}_{k\geq 0}$ converges to some $X$ in $S^p_{ad}(\Omega; \mathcal{D}_T)$. Clearly $X_0=\eta$ $P$-a.s. 

We will now show that the limit $X$ satisfies \eqref{eq:SFDEgeneral2} by showing that
\small\begin{align}\label{eq:XminusX}
d:=\E\Big[\sup_{0\leq t\leq T}&\Big|X(t)-\Big\{\eta(0)+\int_0^{t}f(s,X_s)ds+ \int_0^{t}g(s,X_s)dW(s)\nonumber\\
& + \int_0^{t}\int_{\mathbb{R}_0}h(s,X_s)(z)\tilde N(ds,dz)\Big\}\Big|^p\Big]^{1/p}= 0
\end{align}\normalsize
For arbitrary $k$, we subtract $X^{k+1}$ and add its integral representation inside the supremum in \eqref{eq:XminusX}. Then by the triangle inequality, Kunita's inequality, and finally the Lipschitz condition $(\mathbf{D_1})$ we find that
\small\begin{align*}
d
&\leq \| X-X^{k+1}\|_{S^p_{ad}(\Omega; \mathcal{D}_T)} +  \Big\{C\int_0^T\Big(\| f(t,X_t^k)-f(t,X_t) \|_{L^p(\Omega;\mathbb{R}^d)}^p\\
 &+\| g(t,X_t)-g(t,X_t^k) \|_{L^p(\Omega;\mathbb{R}^{d\times n})}^p + \| h(t,X_t)-h(t,X_t^k)\|_{L^p(\Omega, L^p(\nu))}^p\\
&\quad + \| h(t,X_t)-h(t,X_t^k)\|_{L^p(\Omega,L^2(\nu))}^p \Big)dt\Big\}^{1/p}\\
&\leq \| X-X^{k+1}\|_{S^p_{ad}(\Omega; \mathcal{D}_T)} + \big\{CL \int_0^T\| X_t-X^k_t\|^p_{S^p(\Omega;\mathcal{D})}dt\big\}^{1/p}\\
&\leq  \| X-X^{k+1}\|_{S^p_{ad}(\Omega; \mathcal{D}_T)} + (CLT)^{1/p}\| X-X^k\|_{S^p_{ad}(\Omega; \mathcal{D}_T)}\rightarrow 0.
\end{align*}\normalsize
Since for any $\epsilon >0$ we have that $0\leq d < \epsilon$, it follows that $d=0$, and hence a solution exists.
 
Suppose now that $X$ and $Y$ are solutions of \eqref{eq:Main12}. We will show that $X=Y$. 
Exploiting the integral representation of $X$ and $Y$, Kunita's inequality and the Lipschitz condition $(\mathbf{D_1})$, we have that, for all $t\in [0,T]$,
\small\begin{align*}
\| X-Y\|^p_{S^p_{ad}(\Omega; \mathcal{D}_t)}\leq& C\int_0^t\Big(\| f(s,X_s)-f(s,Y_s)\|^p_{L^p(\Omega;\mathbb{R}^d)}+\| g(s,X_s)-g(s,Y_s)\|^p_{L^p(\Omega;\mathbb{R}^{d\times n})}\\
&+\| h(s,X_s)-h(s,Y_s)\|^p_{L^p(\Omega, L^p(\nu))}+\| h(s,X_s)-h(s,Y_s)\|^p_{L^p(\Omega,L^2(\nu))} \Big)ds\\
\leq & CL\int_0^t \| X_s-Y_s\|^p_{S^p(\Omega;\mathcal{D})} ds\leq  CL\int_0^t \| X-Y\|^p_{S^p(\Omega; \mathcal{D}_s) }ds.
\end{align*}\normalsize
and thus we have $\| X-Y\|^p_{S^p_{ad}(\Omega; \mathcal{D}_t)}=0$ for every $t\in[0,T]$ from Gr\"{o}nwall's inequality.

Similarly, if $X$ is a solution to \eqref{eq:Main12}, 
from the integral representations, Kunita's inequality and the linear growth condition $(\mathbf{D_2})$ we have that
\small\begin{align*}
\| X\|^p_{S^p_{ad}(\Omega; \mathcal{D}_t)}\leq &C\Big\{\|\eta\|^p_{S^p(\Omega;\mathcal{D})}
+\int_0^t \Big(\| f(s,X_s)\|^p_{L^p(\Omega;\mathbb{R}^d)}\\
&\quad+ \| g(s,X_s)\|^p_{L^p(\Omega;\mathbb{R}^{d\times n})}+  \| h(s,X_s)\|^p_{L^p(\Omega, L^p(\nu))} + \| h(s,X_s)\|^p_{L^p(\Omega,L^2(\nu))}\Big)ds\Big\}\\
&\leq C\Big(\|\eta\|^p_{S^p(\Omega;\mathcal{D})}+ K\big(\int_0^t1+\| X_s \|^p_{S^p(\Omega;\mathcal{D})}ds \big)\Big)\leq C\|\eta\|^p_{S^p(\Omega;\mathcal{D})}+ CKt+ CK \int_{0}^t \| X\|^p_{S^p(\Omega; \mathcal{D}_s) } ds \, ,
\end{align*}\normalsize
so applying Gr\"{o}nwall's inequality we obtain
\small
\begin{align*}
\| X\|^p_{S^p_{ad}(\Omega; \mathcal{D}_t)}\leq \Big(C\|\eta\|^p_{S^p(\Omega;\mathcal{D})}+ CKt\Big)e^{CKt},
\end{align*}\end{proof}\normalsize
for all $t\in [0,T]$.

\begin{remark}[\textbf{Path dependent SDEs}]\label{REM:DetF}
Suppose that for each $t\in[0,T]$ and every  $\eta\in S_{ad}^p(\Omega,\mathcal{F}_0;\mathcal{D})$ it holds that,
\begin{align*}f(t,\eta)(\omega)&=F(t,\eta(\omega)) \\
g(t,\eta)(\omega)&=G(t,\eta(\omega))\\
h(t,\eta,\omega,\zeta)&=H(t,\eta(\omega),\zeta),
\end{align*}
 $P$-a.s for some deterministic functionals
\begin{align*}F:&[0,T]\times \mathcal{D}\rightarrow \mathbb{R}^{d},\\
G:&[0,T]\times \mathcal{D}\rightarrow \mathbb{R}^{d\times m},\\
H:&[0,T]\times\mathcal{D}\rightarrow  L^2(\nu)\cap L^p(\nu).
\end{align*}
then the assumptions $(\mathbf{D_1})$ and $(\mathbf{D_2})$ hold whenever $F,G$  are Lipschitz continuous in the second variable, uniformly with respect to the first, and $H$ is Lipschitz continuous in the second variable, uniformly with respect to the first, using both norms $\| \cdot\|_{L^2(\nu)}$ and $\| \cdot\|_{L^p(\nu)}$.
\end{remark}

\subsection{$M^p$ framework}\label{sec:M2existenceAndUniqueness}
Now, consider equation
\begin{align}\label{eq:Main13}
\begin{split} d X(t)&=f(t, X_t,X(t)) d t+g(t,X_t,X(t))d W(t)+ \int_{\mathbb{R}_0}h(t,X_t,X(t))(z)\tilde N(d t, d z)\\
(X_0,X(0))&=(\eta,x). 
\end{split}
\end{align}
Here \eqref{eq:Main13} we have used the notation $f(\cdot,X_t,X(t))$ to emphasize the structure of the product space of $M^p$. Now for each $t\in [0,T]$ we will require that $(X_t,X(t))$ belongs to the space $L^p(\Omega,\mathcal F_t;M^p)$ for some $p\in [2,\infty)$, that will be fixed throughout the section. Therefore, we introduce
\begin{align}
 \mathbf L^{\mathbb F}_p:=\{(t,(\psi,v))\in [0,T]\times  L^p(\Omega,\mathcal F;M^p)\textnormal{ such that }  (\psi,v)\in L^p(\Omega,\mathcal F_t;M^p)  \},
\end{align}
In particular, we will require that:
\begin{align*}
f&:\mathbf L^{\mathbb F}_p\rightarrow L^p(\Omega, \mathbb{R}^d))\\
g&:\mathbf L^{\mathbb F}_p\rightarrow L^p(\Omega, \mathbb{R}^{d\times m}),\\
h&:\mathbf L^{\mathbb F}_p\rightarrow L^p(\Omega, L^2(\nu,\mathbb R^{d\times n})).
\end{align*}
Moreover,
$$(\eta,x)\in L^p(\Omega,\mathcal F_0;M^p).$$

To ensure that the integrals are well-defined,  the following assumptions are imposed on the coefficient functionals $f,g$ and $h$.
\begin{assumption}[$\mathcal Q$]
For $Y \in L^p_{ad}(\Omega; L_T^p)$, the process \begin{align*}
[0,T]\times\Omega\times\mathbb R_0\,\ni,(t,\omega,z)\mapsto h(t,Y_t,Y(t))(\omega)(z)\,\, \in \mathbb R^{d\times n}
\end{align*}
has a predictable version, and 
\begin{align*}
[0,T]\times\Omega,\ni(t,\omega)&\mapsto f(t,Y_t,Y(t))(\omega)\,\, \in \mathbb R^{d},\quad\\
[0,T]\times\Omega,\ni(t,\omega)&\mapsto g(t,Y_t,Y(t))(\omega)\,\, \in \mathbb R^{d\times m}
\end{align*}
have progressively measurable versions.
\end{assumption}

\begin{definition}
 We say that $X\in L^p_{ad}(\Omega; L^p_T)$ is a 
\emph{strong solution} to \eqref{eq:Main13} if for each $t\in[0,T]$
\begin{align}\label{eq:SFDEgeneral3}
\begin{split}
 X(t)&
=x+\int_0^t f(s, X_s,X(s))ds +\int_0^t g(s,X_s,X(s))d W(s)+\int_0^t \int_{\mathbb{R}_0}h(s,X_s,X(s))(z)\tilde N(d s,d z)\\      
(X_0,X(0))&
=(\eta,x).
\end{split}
\end{align}
If the solution is unique, we will sometimes write $^{\eta,x} X$ to denote the solution of \eqref{eq:SFDEgeneral3} with initial data $(X_0, X(0))=(\eta,x).$
\end{definition}
\noindent

\begin{prop}\label{p: cadlag}
Let $Y:[0,T]\times \Omega\rightarrow\R^d$ be a  stochastic process with a.s. c\`{a}dl\`{a}g sample paths. Then the associated $M^p$-valued \emph{segment process}
\begin{align}\label{eq:Mp-ValuedCadlag}
 [0,T]\times\Omega\ni (t,\omega)\mapsto (Y_t(\omega), Y(t, \omega))\in M^p
\end{align}
is a.s. \cadlag.
\end{prop}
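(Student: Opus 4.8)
The plan is to fix a single sample path and reduce the statement to an application of dominated convergence in $L^p([-r,0];\R^d)$. First I would take $\Omega'\subseteq\Omega$ of full probability on which the path $u\mapsto Y(u,\omega)$ is \cadlag \ on $[-r,T]$, fix $\omega\in\Omega'$, and suppress $\omega$ from the notation. The key preliminary fact is that a \cadlag \ function on a compact interval is bounded, so there is $M<\infty$ with $|Y(u)|\le M$ for all $u\in[-r,T]$; this $M$ will supply the integrable dominating function. Since the norm on $M^p$ splits as $\|(\eta,v)\|_{M^p}^p=\|\eta\|_{L^p([-r,0];\R^d)}^p+|v|^p$, and the $\R^d$-component $t\mapsto Y(t)$ is already \cadlag \ by hypothesis, it suffices to prove that $t\mapsto Y_t$, $Y_t(\theta)=Y(t+\theta)$, is \cadlag \ as a map into $L^p([-r,0];\R^d)$.

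For right-continuity at $t$, I would write, for $s\downarrow t$,
\[
\|Y_s-Y_t\|_{L^p}^p=\int_{-r}^0|Y(s+\theta)-Y(t+\theta)|^p\,d\theta,
\]
note that for each fixed $\theta$ the right-continuity of $Y$ at $t+\theta$ gives $Y(s+\theta)\to Y(t+\theta)$, bound the integrand uniformly by $(2M)^p$, and conclude by dominated convergence. For the left limit at $t$, the candidate limit is the pair $(\eta_{t-},Y(t-))\in M^p$, where $\eta_{t-}(\theta):=Y((t+\theta)-)$ is the pointwise left-limit function. One checks that $\eta_{t-}$ is bounded by $M$ and Borel measurable, hence an element of $L^p([-r,0];\R^d)$, and then that $\|Y_s-\eta_{t-}\|_{L^p}\to 0$ as $s\uparrow t$ by the same dominated-convergence argument, now using $Y(s+\theta)\to Y((t+\theta)-)$ for each $\theta$. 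Together with $Y(s)\to Y(t-)$ this identifies the left limit of the segment process in $M^p$, and the proposition follows on $\Omega'$.

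I do not expect a genuine obstacle. The two points that deserve attention are: (i) the Borel measurability of the left-limit function $\eta_{t-}$, needed for it to be a legitimate element of $L^p$, which follows because the left-limit modification of a \cadlag \ path is c\`agl\`ad, hence Borel; and (ii) the uniform-in-$s$ domination, which rests entirely on the local boundedness of \cadlag \ paths. I would also remark that $(\eta_{t-},Y(t-))$ generally does \emph{not} lie in the embedded copy of $\mathcal{D}$ inside $M^p$, since $\eta_{t-}$ is only c\`agl\`ad; this is why the statement is genuinely about the $M^p$-valued segment process and is not merely a consequence of a \cadlag \ statement in the space $\mathcal{D}$.
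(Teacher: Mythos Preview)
Your proposal is correct and follows essentially the same route as the paper: fix a sample path, use boundedness of \cadlag\ functions on compacts for domination, and apply dominated convergence for both right-continuity and existence of left limits in $L^p$. The only cosmetic difference is the choice of representative for the left limit: you take $\eta_{t-}(\theta)=Y((t+\theta)-)$ everywhere, whereas the paper keeps $Y(t+\theta)$ for $\theta\in[-r,0)$ and only replaces the value at $\theta=0$ by $Y(t-)$; since a \cadlag\ path has at most countably many jumps these two representatives agree almost everywhere and define the same element of $L^p$.
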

\noindent
Observe that the property that the segment process is \cadlag \ whenever $Y$ is \cadlag,   depends on the topology of the infinite dimensional space $M^p$. In general, such property does not hold if we replace $M^p$ with $\mathcal{D}$.
\begin{proof}[Proof of Proposition \ref{p: cadlag}]
It suffices to show that if $Y(\omega):[-r,T]\rightarrow \mathbb R^d$ is a \cadlag \ path, then the function 
 \begin{align}\label{eq:Mp-ValuedCadlag}
 [0,T]\ni t\mapsto (Y_t(\omega), Y(t, \omega)) \in M^p
\end{align} is also \cadlag.  The function \eqref{eq:Mp-ValuedCadlag}  is right continuous. In fact, for every sequence $r_k, k\in \mathbb N $ with $r_k >0$ and $r_k\rightarrow 0$ as $k\rightarrow\infty$, we have that 
\begin{align*}
\lim_{k\to \infty} &\|Y_{t+r_k}(\omega)-Y_t(\omega), Y(t+r_k,\omega)-Y(t,\omega))\|_{M^p}^p\\
&= \lim_{k\to \infty} \int_{-r}^0 |Y(t+r_k+\beta, \omega)-Y(t+\beta, \omega)|^p d\beta + \lim_{k\to \infty} |Y(t+r_k, \omega)-Y(t, \omega)|^p=0.
\end{align*}
by the dominated convergence theorem.
Now given $t\in [0,T]$, we define $(Y^-_t(\omega), Y^-(t, \omega))\in M^p$ by
\begin{align*}
Y^-_t(\theta,\omega)= 
\begin{cases}
 Y_t(\theta,\omega), & \theta\in [-r,0)\\
 \lim_{u\rightarrow 0^-} Y_t(u,\omega),& \theta=0.
\end{cases}
\end{align*}
Consider $r_k$ as above, we can use the dominated convergence theorem to observe that 
\begin{align*}
\lim_{k\to \infty}& \|(Y_{t-r_k}(\omega)-Y^-_t(\omega), Y(t-r_k,\omega)-Y^-(t,\omega))\|_{M^p}^p 
\\&= \lim_{k\to \infty} \int_{-r}^0 |Y(t-r_k+\beta, \omega)-Y^-(t+\beta, \omega)|^p d\beta + \lim_{k\to \infty} |Y(t-r_k, \omega)-Y^-(t, \omega)|^p=0,
\end{align*}
and hence the function \eqref{eq:Mp-ValuedCadlag} has left limits.
\end{proof}

\subsubsection{Existence and uniqueness}

The $L^p(\Omega;M^p)$-analogue of the hypotheses $(\mathbf D_1)$ and $(\mathbf D_2)$, are defined below.
\begin{assumption}\label{HYP:Lip}
\begin{description}
\item[$(\mathbf{L_1})$]\label{hyp:Lip}There exists $L>0$,  such that whenever $t\in[0,T]$ and \mbox{$(\eta_1,x_1), (\eta_2,x_2)\in L^p(\Omega,\mathcal F_t;M^p)$,} then
      \begin{align*}
       \| f&(t,\eta_1,x_1)-f(t,\eta_2,x_2)\|^p_{L^p(\Omega;\mathbb{R}^d)}+\| g(t,\eta_1,x_1)-g(t,\eta_2,x_2)\|^p_{L^p(\Omega;\mathbb{R}^{d\times n})}\\
	&+\| h(t,\eta_1,x_1)-h(t,\eta_2,x_2)\|^p_{L^p(\Omega,L^p(\nu))}+\| h(t,\eta_1,x_1)-h(t,\eta_2,x_2)\|^p_{L^p(\Omega;L^2(\nu))}\\
	&\leq  L \| (\eta_1,x_1)-(\eta_2,x_2) \|^p_{L^p(\Omega;M^p)}.
      \end{align*}
\item[$(\mathbf{L_2})$] There exists $K>0$, such that whenever 
$t\in[0,T]$ and $(\eta,v)\in L^p(\Omega,\mathcal F_t;M^p)$, then
\begin{align*}
 \| f&(t,\eta,x)\|^p_{L^p(\Omega;\mathbb{R}^d)}+\| g(t,\eta,x)\|^p_{L^p(\Omega;\mathbb{R}^{d\times n})}\\
&+ \| h(t,\eta,x)\|^p_{L^p(\Omega,L^p(\nu) )}+ \| h(t,\eta,x)\|^p_{L^p(\Omega,L^2(\nu))}\\
& \leq K(1+\| (\eta,x) \|^p_{L^p(\Omega;M^p)}).\nonumber
\end{align*}
\end{description}
\end{assumption}

\begin{theorem}[\textbf{Existence and Uniqueness II}]\label{thm:existenceAndUniqueness_II}$ $
\color{black}Consider \eqref{eq:Main13} with $\mathcal{Q}$ satisfied.\color{black}
\begin{description}
\item[(i)] Let $(\eta,x)\in L^p(\Omega,\mathcal{F}_0;M^p)$ such that $\eta$ is c\`{a}dl\`{a}g $P$-a.s. and $X\in L^p_{ad}(\color{black}\Omega; L^p_T\color{black})$ be a strong solution to equation  \eqref{eq:Main13}.
Then the segment process
\begin{align}
\Omega\times[0,T]\ni(t,\omega)\mapsto (X_t(\omega), X(t,\omega))\in M^p
\end{align} has a  \cadlag \ modification.
 \item[(ii)] Suppose that \color{black}assumption \color{black} $(\mathbf{L_1})$ holds. If $X,Y\in L^p_{ad}(\color{black}\Omega; L^p_T\color{black})$ are strong solutions to \eqref{eq:Main13}, then $X=Y$.
\item[(iii)] Suppose that \color{black} assumptions \color{black} $(\mathbf{L_1})$ and $(\mathbf{L_2})$ hold. Then there exists a strong solution $X$ to equation \eqref{eq:Main13}. Moreover, there exists  $D=D(K,p,T,d,m,n)>0$,
such that
\begin{align}\label{ineq:LpEstimate}
\| X\|^p_{L^p_{ad}(\color{black}\Omega; L^p_t\color{black})}
 \leq e^{Dt}(Dt+\| (\eta,x)\|^p_{L^p(\Omega;M^p)}),
\end{align}whenever $t\leq T$.
\end{description}
\end{theorem}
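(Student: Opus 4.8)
The plan is to mirror the argument of Theorem \ref{thm:existenceAndUniquenessEp}, replacing the space $S^p_{ad}(\Omega;\mathcal{D}_T)$ by $L^p_{ad}(\Omega;L^p_T)$ and the norm comparison \eqref{ineq:Sp_vs_Lp} by the Bochner-space inequality \eqref{ineq:intOfY_t}, while using Proposition \ref{p: cadlag} to obtain the c\`adl\`ag modification in part (i). For part (i), given a strong solution $X$ of \eqref{eq:Main13}, the process $t\mapsto X(t)$ defined by the right-hand side of \eqref{eq:SFDEgeneral3} is an It\^o--L\'evy process (the integrability of the integrands follows from $(\mathbf{L_2})$ applied along $X$ together with \eqref{ineq:intOfY_t}, and from assumption $\mathcal Q$ guaranteeing predictable/progressive versions); such a process has a c\`adl\`ag modification $\tilde X$, and then Proposition \ref{p: cadlag} applied to $\tilde X$ yields that the segment process $(\tilde X_t,\tilde X(t))$ is a.s. c\`adl\`ag, which is the required modification.

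For part (iii), I would set up a Picard iteration in $L^p_{ad}(\Omega;L^p_T)$: put $X^1(t)=x$ for $t\in[0,T]$ and $X^1_0=\eta$, and define $X^{k+1}$ by plugging $(X^k_s,X^k(s))$ into the integral representation \eqref{eq:SFDEgeneral3}. The induction $X^k\in L^p_{ad}(\Omega;L^p_T)\Rightarrow X^{k+1}\in L^p_{ad}(\Omega;L^p_T)$ uses $(\mathbf{L_2})$, the bound \eqref{ineq:intOfY_t} to control $\int_0^T\|(X^k_t,X^k(t))\|^p_{L^p(\Omega;M^p)}dt$ by $\|X^k\|^p_{L^p_{ad}(\Omega;L^p_T)}$, and Kunita's inequality (Lemma \ref{lem:Kunita}) to see that the new process lies in $L^p_{ad}$. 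The key Cauchy estimate is the analogue of \eqref{est:XCauchy},
\begin{align*}
\|X^{k+1}-X^{k}\|^p_{L^p_{ad}(\Omega;L^p_t)}\leq \frac{(LC't)^{k-1}}{(k-1)!}\,\|X^{2}-X^{1}\|^p_{L^p_{ad}(\Omega;L^p_T)},
\end{align*}
proved by induction: apply Kunita's inequality to $X^{k+2}-X^{k+1}$ over $[0,t]$, then $(\mathbf{L_1})$ to bound the coefficient differences by $L\|(X^{k+1}_s,X^{k+1}(s))-(X^k_s,X^k(s))\|^p_{L^p(\Omega;M^p)}$, then \eqref{ineq:intOfY_t} to pass to $\int_0^t\|X^{k+1}-X^k\|^p_{L^p_{ad}(\Omega;L^p_s)}ds$, and finally the induction hypothesis and integration. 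This gives that $\{X^k\}$ is Cauchy in the complete space $L^p_{ad}(\Omega;L^p_T)$, hence convergent to some $X$; passing to the limit in the integral equation (again via Kunita's inequality and $(\mathbf{L_1})$, exactly as in the display bounding $d$ in the proof of Theorem \ref{thm:existenceAndUniquenessEp}) shows $X$ solves \eqref{eq:Main13}. Uniqueness (part (ii)) is the same Gr\"onwall argument: for two solutions $X,Y$, Kunita plus $(\mathbf{L_1})$ plus \eqref{ineq:intOfY_t} give $\|X-Y\|^p_{L^p_{ad}(\Omega;L^p_t)}\leq C'L\int_0^t\|X-Y\|^p_{L^p_{ad}(\Omega;L^p_s)}ds$, so the norm vanishes. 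The moment estimate \eqref{ineq:LpEstimate} follows by the same computation with $(\mathbf{L_2})$ in place of $(\mathbf{L_1})$, Gr\"onwall, and relabelling constants to absorb everything into $D$.

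The main technical obstacle, compared with the $\mathcal D$-setting, is the non-injectivity of the map $S^p_{ad}(\Omega;\mathcal D_t)\to L^p_{ad}(\Omega;L^p_t)$ and the fact that segments are only defined for a.e.\ $t$ when one works in $L^p$; this is precisely why one must be careful to always estimate $\int_0^t\|(X_s,X(s))\|^p_{L^p(\Omega;M^p)}ds$ via \eqref{ineq:intOfY_t} rather than pointwise in $s$, and why part (i) is needed to even make sense of the segment process as a genuine $M^p$-valued c\`adl\`ag process rather than an a.e.-defined object. The rest is a routine transcription of the Picard/Gr\"onwall machinery already carried out for Theorem \ref{thm:existenceAndUniquenessEp}.
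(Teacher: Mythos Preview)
Your proposal is correct and matches the paper's approach: for (ii)--(iii) the paper explicitly says to transcribe the proof of Theorem \ref{thm:existenceAndUniquenessEp} with the $S^p$-norms replaced by the $L^p$-norms and with \eqref{ineq:intOfY_t} supplying the passage from $\int_0^t\|(X_s,X(s))\|^p_{L^p(\Omega;M^p)}ds$ to $\|X\|^p_{L^p_{ad}(\Omega;L^p_t)}$, while for (i) it uses Proposition \ref{p: cadlag} after observing that $X$ is a semimartingale on $[0,T]$. One small correction: part (i) does not assume $(\mathbf{L_2})$, so rather than invoking it for integrability you should simply note (as the paper does) that being a strong solution already means the integrals in \eqref{eq:SFDEgeneral3} are well-defined, hence $X$ is a semimartingale on $[0,T]$ admitting a c\`adl\`ag modification, which pasted with the c\`adl\`ag initial datum $\eta$ gives a c\`adl\`ag path on $[-r,T]$ to which Proposition \ref{p: cadlag} applies.
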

\color{black}
\begin{proof}
\textbf{(i)}
Recall that since $X$ is a strong solution of \eqref{eq:Main13}, it is a semimartingale on $[0,T]$ and hence it admits a modification which is \cadlag \ on $[0,T]$. Since $X_0=\eta$ is \cadlag,  $X$ is \cadlag, on $[-r,T]$. By Proposition \ref{p: cadlag} (i) holds.
\color{black}

\textbf{(ii,iii)} The proof is based on the same argument as for the proof of Theorem \ref{thm:existenceAndUniquenessEp}. For the sake of brevity we do not write out the details. However, we remark that if one replaces the norms $\parallel \cdot\parallel_{S^p(\Omega; \mathcal{D})}$ and $\parallel \cdot\parallel_{S^p_{ad}(\Omega; \mathcal{D}_t)}$, with the norms $\parallel \cdot\parallel_{L^p(\Omega; M^p)}$ and $\parallel \cdot\parallel_{L^p_{ad}(\Omega; L^p_t))}$ respectively, then all the inequalities hold true, except for the choice of constants.
As an example, we provide the following $M^p$ analogue of \eqref{ineq:existence_proof1}, namely
\begin{align}
\begin{split}
\int_0^T & \Big(\| f(t, X^k_t,X^k(t))\|^p_{L^p(\Omega;\mathbb{R}^d)} +  \| g(t, X^k_t,X^k(t))\|^p_{L^p(\Omega;\mathbb{R}^{d\times n})} \\
&+\| h(t, X^k_t,X^k(t))\|^p_{L^p(\Omega, L^p(\nu))}+ \| h(t, X_t^k,X^k(t))\|^p_{L^p(\Omega,L^2(\nu))}\Big) dt\\
&\leq \int_0^T K(1+\| (X^k_t, X^k(t)) \|^p_{L^p(\Omega;M^p)}) dt 
\leq KT(1+(T+R)\| X^k\|^p_{L^p_{ad}(\Omega; L^p_t)})
<\infty.
\end{split}
\end{align}
This follows immediately by the assumption $\mathbf L_2$ and inequality \eqref{ineq:intOfY_t}
\end{proof}

\color{black}

Let us stress that when the initial value is \cadlag, then the setting of Section \eqref{sec:functionalsOnS} is more general than the one in this section. In fact, the assumptions $(\mathbf{L_1})$ and $(\mathbf{L_2})$ imply assumptions $(\mathbf{D_1})$ and $(\mathbf{D_2})$, respectively.

\subsection{Robustness SFDDEs}\label{robust}

In the present section we study robustness of SFDDE to changes of the noise. In particular, we want to approximate the solution of an SFDDE $X$,  with an approximate processes $X^{\epsilon}$, where  $X^{\epsilon}$ are defined by substituting the integrals with respect to the small jumps with  integrals with respect to scaled Brownian motions.  We follow rather closely, the presentation in \cite{MR2814479} for ordinary SDE's and remark that a related problem is also considered in \cite{MR2784742}. In this paper we also include a new ingredient, by giving sufficient conditions which ensure that the approximations $X^{\epsilon}$ converge to $X$ in the $p$'th mean.

The main motivation for studying such robustness problem is that it is difficult to perform simulations of distributions corresponding to a L\'{e}vy process. Indeed, simulation of such distributions are often performed by neglecting the jumps below a certain size $\epsilon$. However, when needed to preserve the variation of the infinite activity L\'evy process, a scaled Brownian motion is typically replacing the small jumps.  Under some additional assumptions, it is known that given a square integrable (1-dimensional) L\'evy process with L\'evy measure $\mu$ and compensated Poisson random measure $\widetilde{M}$,  the expression
\begin{align*}
\int_{|z|<\epsilon} z^2\mu(dz)^{-1/2}\int_0^t\int_{|z|<\epsilon}z\tilde M(dz, ds),
\end{align*}
converges in distribution to  a standard Brownian motion $W$, as $\epsilon$ tends to $0$. 
We refer to  \cite{MR1834755,MR2307403} for more details on this topic.
We remark that the robustness problem in this paper, does not rely on the above mentioned additional assumptions.

\subsubsection{The model}

Fix $p\in[2,\infty) $. We want to consider the following dynamical systems with memory and jumps in the setting of Section \ref{sec:functionalsOnS}:
\begin{align}\label{SFDEII}
X(t) &= \eta(0) + \int_0^t f(s,X_{s})ds + \int_0^t g(s, X_{s})dB(s) + \int_0^t \int_{\R_0} h_0(s,X_{s})\lambda(z) \tilde{N}(ds,dz), \ \ t\in [0,T]\nonumber\\
X_0 &= \eta\in S^p(\Omega,\mathcal{D})\, .
\end{align}
Here, 
\begin{align*}\label{def:h0}
h_0:\mathbf S^{\mathbb F}_p\rightarrow L^p(\Omega,  \mathbb R^{d\times k}),
\end{align*} and 
\begin{align*}
\lambda\in L^2(\nu, \mathbb R^{k\times n})\cap L^p(\nu, \mathbb R^{k\times n})
\end{align*} for some $k\in\mathbb N$.  
Observe that
\begin{align*}
h:=h_0 \lambda:\mathbf S^{\mathbb F}_p\rightarrow L^p(\Omega,  L^2(\nu,\mathbb R^{d\times n})).
\end{align*}
\begin{example}
Suppose that $n=k=d$ and that $h_0(t,\eta)$ and $\lambda$ are diagonal matrices. In particular that,
\begin{align*}
h_0(t,\eta)\lambda(z)
\end{align*}
is a diagonal matrix with entries $h^{i,i}_0(t,\eta)\lambda_{i,i}(z)$ for  $i=1, \ldots, n$. Then the component-wise form of the jump integral in the SFDDE \eqref{SFDEII} is given by
\begin{align*}
 \int_0^t\int_{\mathbb R_0}h^{i,i}_0(s,X_s)\lambda_{i,i}(z)\tilde{N}_i(ds,dz).
\end{align*}

If we let the delay parameter $r$ be equal to $0$, then  this example reduces to the problem of  \emph{robustness to model choice} treated in \cite{MR2916862}.
\end{example}

\noindent
Now, let us impose the following assumptions on  $f,g, h_0$ and $\lambda$:
\begin{assumption}\label{HYP:Rob}
\begin{description}
\item[(i)] The coefficient functionals $f$ and $g$ \color{black} are assumed to satisfy the assumptions  
$(\mathcal{P})$, $(\mathbf{D_1})$, $(\mathbf{D_2})$
\color{black} of Section \ref{sec:functionalsOnS}.
\item[(ii)] Whenever $Y$ is a \cadlag \ adapted process on $[-r,T]$, then $h_0(t, Y_t)$ is predictable.Moreover, the functional $h_0$  satisfies the Lipschitz and linear growth conditions:
\begin{align*}
\| h_0(t,\eta_1)-h_0(t,\eta_2)\|^p_{L^p(\Omega,  \mathbb R^{d\times k})}&\leq  L \| \eta_1-\eta_2 \|^p_{S^p(\Omega;\mathcal{D})} , \\
\| h_0(t,\eta)\|^p_{L^p(\Omega,  \mathbb R^{d\times k})}&\leq  K(1+ \| \eta \|^p_{S^p(\Omega;\mathcal{D})}).
\end{align*}
\end{description}
\end{assumption}
\noindent We claim now that the map $h:=h_0 \lambda$ satisfies the assumptions $(\mathcal{P})$, $(\mathbf{D_1})$, $(\mathbf{D_2})$ from  Section \ref{sec:functionalsOnS}.
In fact observe that
\begin{align*}
\begin{split}
&\| h(t,\eta_1)- h(t,\eta_2)\|^p_{L^p(\Omega, L^p(\nu))}+ \| h(t,\eta_1)- h(t,\eta_2)\|^p_{L^p(\Omega, L^2(\nu))} \\
&=\sum_{j=1}^{n}\| (h_0(t,\eta_1)- h_0(t,\eta_2))\lambda^{,j}\|^p_{L^p(\Omega, L^p(\nu_j))}+ \| (h_0(t,\eta_1)- h_0(t,\eta_2))\lambda^{,j}\|^p_{L^p(\Omega, L^2(\nu_j))}\\
&\leq  \| h_0(t,\eta_1)- h_0(t,\eta_2)\|^p_{L^p(\Omega;\mathbb{R}^{d\times k})}\sum_{j=1}^{n}\big(\| \lambda^{,j}\|^p_{L^p(\nu_j)}+\| \lambda^{,j}\|^p_{L^2(\nu_j)}\big)\\
&\leq L\| \eta_1-\eta_2 \|^2_{S^p(\Omega;\mathcal{D})}\big(\| \lambda\|^p_{L^p(\nu)}+ \| \lambda\|^p_{L^2(\nu)}\big).
\end{split}
\end{align*}
Thus $h$ satisfies the Lipschitz assumption $(\mathbf D_1)$. A similar argument yields $h$ satisfies the linear growth assumption $(\mathbf D_2)$. Thus, by the existence and uniqueness Theorem \ref{thm:existenceAndUniquenessEp}, the following result holds.
\begin{corollary}
The equation \eqref{SFDEII} has a unique  solution $^\eta X$ in $S^p_{ad}(\Omega; \mathcal{D}_T)$. Moreover, there exists  $D=D(K,\lambda,p,T,d,m,n)>0$, such that 
\begin{align}                                                                                                         
E\left[\sup_{-r\leq s\leq t} | {}^\eta X(s)|^p \right] \leq e^{Dt}(Dt+\|\eta\|_{S^p(\Omega;\mathcal{D})}^p),                                                                                                                          
\end{align}
for any $t\leq T$.
\end{corollary}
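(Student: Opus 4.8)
The plan is to obtain this corollary as a direct application of the Existence and Uniqueness Theorem \ref{thm:existenceAndUniquenessEp} to equation \eqref{eq:Main12} with the specific coefficient $h:=h_0\lambda$, once the three structural requirements $(\mathcal P)$, $(\mathbf{D_1})$, $(\mathbf{D_2})$ are checked for the triple $(f,g,h)$. By Assumption \ref{HYP:Rob}(i) the coefficients $f$ and $g$ already satisfy $(\mathcal P)$, $(\mathbf{D_1})$ and $(\mathbf{D_2})$, so the only work is to transfer these properties from $h_0$ to the product $h_0\lambda$ and to confirm that $h(t,\cdot)$ maps into the correct target space $L^p(\Omega,L^2(\nu,\mathbb R^{d\times n}))$.

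First I would verify the measurability part of $(\mathcal P)$: given $Y\in S^p_{ad}(\Omega;\mathcal D_T)$, Assumption \ref{HYP:Rob}(ii) gives that $(t,\omega)\mapsto h_0(t,Y_t)(\omega)$ has a predictable version, and since $\lambda:\mathbb R_0\to\mathbb R^{k\times n}$ is deterministic and Borel measurable, the map $(t,\omega,z)\mapsto h_0(t,Y_t)(\omega)\lambda(z)$ is the product of a predictable process with a Borel function of $z$, hence admits a predictable version. Membership of $h(t,\cdot)$ in $L^p(\Omega,L^2(\nu))$ follows from the submultiplicativity bound $\| h_0(t,\eta)\lambda^{,j}\|_{L^2(\nu_j)}\leq \| h_0(t,\eta)\|\,\|\lambda^{,j}\|_{L^2(\nu_j)}$ together with the linear growth of $h_0$ and $\lambda\in L^2(\nu)$. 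The Lipschitz estimate $(\mathbf{D_1})$ for $h$ is exactly the computation already displayed just before the corollary: one factors out $\| h_0(t,\eta_1)-h_0(t,\eta_2)\|_{L^p(\Omega;\mathbb R^{d\times k})}$, bounds it by $L\|\eta_1-\eta_2\|^p_{S^p(\Omega;\mathcal D)}$, and absorbs the remaining factor $\sum_{j}\big(\|\lambda^{,j}\|^p_{L^p(\nu_j)}+\|\lambda^{,j}\|^p_{L^2(\nu_j)}\big)=\|\lambda\|^p_{L^p(\nu)}+\|\lambda\|^p_{L^2(\nu)}$ into a new Lipschitz constant; the identical argument applied to the growth bound on $h_0$ in \ref{HYP:Rob}(ii) yields $(\mathbf{D_2})$ for $h$, with constants depending on $K$, $\|\lambda\|_{L^p(\nu)}$ and $\|\lambda\|_{L^2(\nu)}$.

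With $(\mathcal P)$, $(\mathbf{D_1})$ and $(\mathbf{D_2})$ in force for $(f,g,h)$, I would invoke Theorem \ref{thm:existenceAndUniquenessEp}: part (i) gives uniqueness and part (ii) gives existence of ${}^\eta X\in S^p_{ad}(\Omega;\mathcal D_T)$ together with $\|{}^\eta X\|^p_{S^p_{ad}(\Omega;\mathcal D_t)}\leq e^{Dt}\big(Dt+\|\eta\|^p_{S^p(\Omega;\mathcal D)}\big)$ for $t\leq T$. Unwinding the norm via $\|{}^\eta X\|^p_{S^p_{ad}(\Omega;\mathcal D_t)}=E\big[\sup_{-r\leq s\leq t}|{}^\eta X(s)|^p\big]$ then produces the stated inequality, and the dependence of $D$ on $\lambda$ is recorded because the effective Lipschitz and growth constants passed to Theorem \ref{thm:existenceAndUniquenessEp} now carry the factors $\|\lambda\|_{L^p(\nu)}$ and $\|\lambda\|_{L^2(\nu)}$. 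There is no real obstacle here, since the result is a corollary; the only points deserving a line of care are confirming that the product $h_0\lambda$ is predictable and that \emph{both} norms $\|\lambda\|_{L^2(\nu)}$ and $\|\lambda\|_{L^p(\nu)}$ are finite by hypothesis, so that both halves of Kunita's inequality \eqref{ineq:KunitaNorm} are controlled.
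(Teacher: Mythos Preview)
Your proposal is correct and follows exactly the paper's own approach: the paper verifies that $h=h_0\lambda$ satisfies $(\mathbf{D_1})$ via the displayed computation, states that $(\mathbf{D_2})$ follows by a similar argument, and then invokes Theorem \ref{thm:existenceAndUniquenessEp}. Your write-up merely makes the predictability check for $(\mathcal P)$ and the unwinding of the $S^p_{ad}$-norm explicit, which the paper leaves implicit.
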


\subsubsection{The approximating model}

Let us first introduce some notation. For any $\varepsilon\in(0,1)$, define  $\lambda_\varepsilon(z)\in\mathbb R^{k\times n}$ by
\begin{align*}
\lambda_{\varepsilon}(z)=1_{\{|z|<\varepsilon\}}(z)\lambda(z),
\end{align*}for a.e. $z$.
Now, let $B$ be an $n$-dimensional $\mathbb F$-adapted Brownian motion, independent of $\tilde N$. Independence of $B$ and $W$ is not required, (see e.g. \cite{MR2814479}). We want to approximate equation \eqref{SFDEII} by replacing the integral with respect to the small jumps with an integral with respect to the Brownian motion $B$. More specifically, we will replace the integrators 
\begin{align}\label{eq:replacable integral}
\int_{\mathbb R_0}\lambda_{\varepsilon}^{i,j}(z) \tilde N^j(dt,dz)\, ,
\end{align} 
with the integrators
\begin{align}\label{eq:replacer inetgral}
\Lambda^{i,j}(\varepsilon)dB^j(t)\, ,
\end{align}
for $i = 1, \ldots, k; j=1, \ldots, n$.
Here, $\Lambda(\epsilon)
$ can be  any bounded deterministic function with values in $\mathbb R^{k\times n}$ converging to $0$ as $\varepsilon\rightarrow 0$. We choose to let $$\Lambda^{i,j}(\varepsilon)
=\| \lambda^{i,j}_{\varepsilon}\|_{L^2(\nu_j)}.$$
This choice corresponds to what has previously been used in the literature, see e.g. \cite{MR2814479}. A justification of this choice is considered in  Remark \ref{REM:P'} below. Notice now that
\begin{align*}
|\Lambda(\varepsilon)|_2=\| \lambda_{\varepsilon}\|_{L^2(\nu_j)}.
\end{align*}

\begin{remark}\label{REM:P'}
The choice $\Lambda^{i,j}(\epsilon)=\| \lambda^{i,j}_{\varepsilon}\|_{L^2(\nu_j)}$ above is reasonable in the sense that for a given predictable square integrable  process $Y$, this change of integrator preserves the variance of the integrals, i.e.
\begin{align*}
E\Big[\Big(\int_0^t \int_{\mathbb R_0}Y(s)\lambda_{\varepsilon}^{i,j}(z) \tilde N^j(ds,dz)\Big)^2 \Big]= E\Big[\int_0^t Y(s)^2ds \Big]
\| \lambda^{i,j}_{\varepsilon}\|^2_{L^2(\nu_j)}
=E\Big[\Big(\int_0^t Y(s)\Lambda^{i,j}(\epsilon) dB^j(s)\Big)^2 \Big],
 \end{align*} for $i= 1, \ldots, k; j=1, \ldots, n$.
From a financial terminology perspective where these models can be applied (see e.g. \cite{SMohammed, MR2814479, Chang2, Chang, MR2241374, MR2332878, kuchler}), this choice of $\Lambda$, preserves the \emph{total volatility}of a process, when \eqref{eq:replacable integral} is replaced by \eqref{eq:replacer inetgral}.
However, this particular choice of $\Lambda$ is not necessary for the analysis, as we will see in Remark \ref{Rem:generalLambda2} below. 
\end{remark}
Now, we are ready to exploit the dynamics of the \emph{approximated processes} $X^{\epsilon}$. 
Consider 
\begin{align}\label{aproxSFDE_1}
\begin{split}
X^{(\epsilon)}(t) &=\eta(0) + \int_0^t f(s,X_s^{\epsilon})ds + \int_0^t g(s, X_{s}^{\varepsilon})dW(s)\\
&\quad+ \int_0^t h_0(s,X_{s}^{\epsilon})\Lambda(\epsilon) dB(s)+ \int_0^t \int_{\mathbb{R}_0} h_0(s,X_{s}^{\epsilon})(\lambda(z)-\lambda_{\varepsilon}(s)) \tilde{N}(ds,dz)\\
X_0^{(\epsilon)}&=\eta
\end{split}
\end{align}
Before proceeding to the main result of this section, we make the following observations regarding the functionals in the approximated equation \eqref{aproxSFDE_1}:
\begin{itemize}
\item
The functionals
\begin{align*}
\begin{split}
\eta &\stackrel{g_1}{\longmapsto} h_0(t,\eta)\Lambda(\epsilon)\, ,\\
\eta &\stackrel{h_1}{\longmapsto} h_0(t,\eta)(\lambda-\lambda_{\varepsilon})\, ,
\end{split}
\end{align*}
satisfy the corresponding hypotheses from Section \ref{sec:functionalsOnS}
\item The Lipschitz and linear growth constant appearing in 
assumptions $(\mathbf{D_1})$ and $(\mathbf{D_2})$
can be chosen independent of $\epsilon$. In particular, 
we can deduce the following linear growth estimate:
\begin{align*}
\|& h_0(t,\eta)\Lambda(\epsilon)\|^p_{L^p(\Omega;\mathbb{R}^{d\times n})}+ \| h_0(t,\eta)(\lambda-\lambda_{\varepsilon})\|^p_{L^p(\Omega, L^p(\nu))}+\| h_0(t,\eta)(\lambda-\lambda_{\varepsilon})\|^p_{L^p(\Omega,L^2(\nu))}\\
&\leq \| h_0(t,\eta)\|^p_{L(\Omega; \mathbb{R}^{d\times k})}\sup_{\varepsilon\in(0,1)}|\Lambda(\epsilon)|^p+\| h_0(t,\eta)\|^p_{L^p(\Omega;\mathbb{R}^{d\times k})}\| \lambda\|^p_{L^p(\nu)}+\| h_0(t,\eta)\|^p_{L^p(\Omega;\mathbb{R}^{d\times k})}\| \lambda\|^p_{L^2(\nu)}\\
&\leq K'(1+\| \eta \|^p_{S^p(\Omega;\mathcal{D})}).
\end{align*}
A similar estimate holds for the Lipschitz condition $(\mathbf{D_1})$.
\end{itemize}
The following existence and uniqueness result immediately follows from Theorem \ref{thm:existenceAndUniquenessEp}.
\begin{corollary}\label{cor:existenceOfXepsilon}
For each $\epsilon>0$, there exists a unique strong solution $^\eta X^{\epsilon}$ to the equation \eqref{aproxSFDE_1}. Moreover, there exists a $D=D(K, \lambda, p,T,d,m,n)>0$, independent of $\epsilon$, such that
\begin{align}\label{aproxsolestimate}                                                                                                  E\left[\sup_{-r\leq s\leq t}|^\eta X^\epsilon(s)|^p\right]\leq e^{Dt}(Dt+\|\eta\|^p_{S^p(\Omega;\mathcal{D})})
\end{align}
for any $t\leq T$.
\end{corollary}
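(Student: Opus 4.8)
The plan is to recast the approximating equation \eqref{aproxSFDE_1} as a single SFDDE of the form \eqref{eq:Main12} and then invoke Theorem \ref{thm:existenceAndUniquenessEp} directly, keeping careful track of how the constants depend on $\epsilon$. Concretely, one puts $\bar W:=(W,B)$, which is an $(m+n)$-dimensional $\mathbb{F}$-adapted Brownian motion, and defines the enlarged diffusion functional $\bar g(t,\eta):=\bigl(g(t,\eta),\,h_0(t,\eta)\Lambda(\epsilon)\bigr)$ with values in $\mathbb{R}^{d\times(m+n)}$, together with the jump functional $\bar h(t,\eta):=h_0(t,\eta)(\lambda-\lambda_\epsilon)$. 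With these choices, \eqref{aproxSFDE_1} is precisely an equation of the form \eqref{eq:Main12}, with coefficients $(f,\bar g,\bar h)$, Brownian dimension $m+n$ in place of $m$, jump dimension $n$, and initial datum $\eta\in S^p(\Omega,\mathcal{F}_0;\mathcal{D})$.

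What remains is to verify that $(f,\bar g,\bar h)$ satisfy assumption $\mathcal{P}$ and the hypotheses $(\mathbf{D_1})$, $(\mathbf{D_2})$, with constants that do not depend on $\epsilon$. For $\mathcal{P}$: $f$ and $g$ satisfy $\mathcal{P}$ by Assumption \ref{HYP:Rob}(i); by Assumption \ref{HYP:Rob}(ii), for any \cadlag \ adapted $Y$ the process $(t,\omega)\mapsto h_0(t,Y_t)(\omega)$ is predictable, hence so are $h_0(t,Y_t)\Lambda(\epsilon)$ (multiplication by a deterministic matrix) and $(t,\omega,z)\mapsto h_0(t,Y_t)(\omega)(\lambda(z)-\lambda_\epsilon(z))$ (the extra factor depends only on $z$), so $\bar g$ has a progressive version and $\bar h$ a predictable version. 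For $(\mathbf{D_1})$ and $(\mathbf{D_2})$: these are exactly the content of the two observations preceding the statement — the functionals $g_1:\eta\mapsto h_0(t,\eta)\Lambda(\epsilon)$ and $h_1:\eta\mapsto h_0(t,\eta)(\lambda-\lambda_\epsilon)$ are Lipschitz and of linear growth in the $S^p(\Omega;\mathcal{D})$-norm, and their constants can be taken independent of $\epsilon$ because $\sup_{\epsilon\in(0,1)}|\Lambda(\epsilon)|\leq\|\lambda\|_{L^2(\nu)}$, $\|\lambda-\lambda_\epsilon\|_{L^p(\nu)}\leq\|\lambda\|_{L^p(\nu)}$ and $\|\lambda-\lambda_\epsilon\|_{L^2(\nu)}\leq\|\lambda\|_{L^2(\nu)}$. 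One then names $L'$ and $K'$ the resulting $\epsilon$-free Lipschitz and linear growth constants; they depend only on $L$, $K$ and the fixed norms of $\lambda$.

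Applying Theorem \ref{thm:existenceAndUniquenessEp}(i)--(ii) to $(f,\bar g,\bar h)$ then produces, for each $\epsilon>0$, a unique strong solution ${}^\eta X^\epsilon\in S^p_{ad}(\Omega;\mathcal{D}_T)$ together with the moment bound
\[
\| {}^\eta X^\epsilon\|^p_{S^p_{ad}(\Omega;\mathcal{D}_t)}\leq e^{Dt}\bigl(Dt+\|\eta\|^p_{S^p(\Omega;\mathcal{D})}\bigr),\qquad t\leq T,
\]
with $D$ depending on $K'$, $p$, $T$, $d$, $m$, $n$. Since $K'$ is $\epsilon$-free and the dimensions $m,n,d$ are fixed, $D$ may be chosen independently of $\epsilon$ (and, absorbing the norms of $\lambda$ into $K'$, one may write $D=D(K,\lambda,p,T,d,m,n)$); unwinding the norm via $\| {}^\eta X^\epsilon\|^p_{S^p_{ad}(\Omega;\mathcal{D}_t)}=E\bigl[\sup_{-r\leq s\leq t}|{}^\eta X^\epsilon(s)|^p\bigr]$ gives exactly \eqref{aproxsolestimate}.

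I expect no genuine analytic obstacle here: the whole point is that \eqref{aproxSFDE_1} fits the template of Theorem \ref{thm:existenceAndUniquenessEp} once $W$ and $B$ are bundled into a single Brownian motion. The one place that needs attention is the bookkeeping of uniformity in $\epsilon$ — one must make sure that every constant entering the Picard iteration and the Gr\"onwall step inside the proof of Theorem \ref{thm:existenceAndUniquenessEp} is taken from the $\epsilon$-independent quantities $\|\lambda\|_{L^2(\nu)}$, $\|\lambda\|_{L^p(\nu)}$ and $\sup_{\epsilon\in(0,1)}|\Lambda(\epsilon)|$, rather than from any $\epsilon$-dependent bound, so that the exponential constant $D$ does not blow up as $\epsilon\downarrow 0$.
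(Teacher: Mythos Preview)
Your proposal is correct and follows exactly the approach the paper intends: the text preceding the corollary verifies that the functionals $g_1$ and $h_1$ satisfy $(\mathbf{D_1})$, $(\mathbf{D_2})$ with $\epsilon$-independent constants, and the corollary is then stated as an immediate consequence of Theorem~\ref{thm:existenceAndUniquenessEp}, which is precisely the reduction you carry out in detail. One small caveat: the paper allows $W$ and $B$ to be correlated, so $\bar W=(W,B)$ need not be a standard $(m+n)$-dimensional Brownian motion; this is harmless, since one can write $\bar W=A\tilde W$ for a standard $\tilde W$ and absorb the constant matrix $A$ into $\bar g$ without affecting the $\epsilon$-independence of the constants.
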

\noindent Now, we are ready to state the main result of the present section. This result guarantees that, when $\varepsilon$ tends to $0$, $X_\varepsilon$ converges to $X$ in $S^p_{ad}(\Omega; \mathcal{D}_T)$.

\begin{theorem}[Robustness]\label{thm:Robustness}
Suppose that $X$ satisfies equation \eqref{SFDEII} and $X^\epsilon$ satisfies equation \eqref{aproxSFDE_1}. Then there exist a constant $A:=A(p,T,\eta, K,L,\lambda)>0$, independent of $\epsilon$, such that
\begin{align}\label{ineq:Robustness}
E\left[\sup_{-r\leq s\leq t}|^\eta X(s)- {^\eta X}^{\epsilon}(s)|^p\right]\leq Ae^{At}(\|\lambda_\varepsilon\|^p_{L^2(\nu)}+ \|\lambda_\varepsilon\|^p_{L^p(\nu)}
).
\end{align}
\end{theorem}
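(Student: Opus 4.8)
The plan is to subtract the integral representations of $^\eta X$ and $^\eta X^\epsilon$, estimate the difference in $S^p_{ad}(\Omega;\mathcal{D}_t)$ using Kunita's inequality (Lemma~\ref{lem:Kunita}), and close the argument with Gr\"onwall's inequality. Writing $Z(t):= {}^\eta X(t)- {}^\eta X^\epsilon(t)$ and noting that $Z_0=0$, one has for $t\in[0,T]$
\begin{align*}
Z(t)&=\int_0^t \big(f(s,X_s)-f(s,X_s^\epsilon)\big)ds+\int_0^t\big(g(s,X_s)-g(s,X_s^\epsilon)\big)dW(s)\\
&\quad+\int_0^t\big(h_0(s,X_s)-h_0(s,X_s^\epsilon)\big)\lambda(z)\,\tilde N(ds,dz)\\
&\quad+\int_0^t\Big(h_0(s,X_s^\epsilon)\lambda_\varepsilon(z)\,\tilde N(ds,dz)-h_0(s,X_s^\epsilon)\Lambda(\epsilon)\,dB(s)\Big).
\end{align*}
The first three groups of terms are ``Lipschitz'' contributions that will be absorbed into the Gr\"onwall constant; the last group is the genuine approximation error, which should produce the factor $\|\lambda_\varepsilon\|^p_{L^2(\nu)}+\|\lambda_\varepsilon\|^p_{L^p(\nu)}$ on the right-hand side.

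First I would apply Kunita's inequality to $Z$, which bounds $E[\sup_{0\le u\le t}|Z(u)|^p]$ by a constant times the time integral of the $L^p(\Omega)$-norms of the drift, the $L^p(\Omega;\mathbb{R}^{d\times m})$-norm of the diffusion, and the $L^p(\Omega;L^p(\nu))$ and $L^p(\Omega;L^2(\nu))$ norms of the jump integrand. For the first three groups, I would use Assumption~\ref{HYP:Rob}(i)--(ii) (the Lipschitz bounds $(\mathbf{D_1})$ for $f,g$ and for $h_0$, together with the computation already carried out in the excerpt showing $h=h_0\lambda$ is Lipschitz) to bound these norms by $L'\,\|Z_s\|^p_{S^p(\Omega;\mathcal{D})}\le L'\,\|Z\|^p_{S^p_{ad}(\Omega;\mathcal{D}_s)}$, exactly as in the proof of Theorem~\ref{thm:existenceAndUniquenessEp}. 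For the approximation-error group I would use the linear growth of $h_0$ ($(\mathbf{D_2})$-type bound in Assumption~\ref{HYP:Rob}(ii)) together with the a priori estimate \eqref{aproxsolestimate} for $^\eta X^\epsilon$ from Corollary~\ref{cor:existenceOfXepsilon}: since $\lambda-\lambda_\varepsilon$ and $\lambda_\varepsilon$ together recover $\lambda$, and the integrand contributing the error is $h_0(s,X_s^\epsilon)\lambda_\varepsilon$ versus $h_0(s,X_s^\epsilon)\Lambda(\epsilon)$, both the jump part ($\|h_0(s,X_s^\epsilon)\lambda_\varepsilon\|^p_{L^p(\Omega,L^p(\nu))}+\|h_0(s,X_s^\epsilon)\lambda_\varepsilon\|^p_{L^p(\Omega,L^2(\nu))}$) and the Brownian part ($|\Lambda(\epsilon)|^p\le\|\lambda_\varepsilon\|^p_{L^2(\nu)}$, using the identity $|\Lambda(\varepsilon)|_2=\|\lambda_\varepsilon\|_{L^2(\nu)}$) are bounded by a constant times $\big(\|\lambda_\varepsilon\|^p_{L^p(\nu)}+\|\lambda_\varepsilon\|^p_{L^2(\nu)}\big)\big(1+\sup_{-r\le s\le T}E[|{}^\eta X^\epsilon(s)|^p]\big)$, and the last supremum is bounded uniformly in $\epsilon$ by \eqref{aproxsolestimate}. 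This yields
\[
\|Z\|^p_{S^p_{ad}(\Omega;\mathcal{D}_t)}\le C_1\big(\|\lambda_\varepsilon\|^p_{L^2(\nu)}+\|\lambda_\varepsilon\|^p_{L^p(\nu)}\big)+C_2\int_0^t\|Z\|^p_{S^p_{ad}(\Omega;\mathcal{D}_s)}\,ds,
\]
with $C_1,C_2$ depending only on $p,T,\eta,K,L,\lambda$ and not on $\epsilon$; Gr\"onwall's inequality then gives \eqref{ineq:Robustness} with $A:=\max\{C_1,C_2\}$.

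The main obstacle I expect is the careful bookkeeping in the error term: one must split $h_0(s,X_s)\lambda - \big(h_0(s,X_s^\epsilon)(\lambda-\lambda_\varepsilon)+h_0(s,X_s^\epsilon)\Lambda(\epsilon)dB\big)$ so that the Lipschitz-in-segment part (difference of $h_0$ evaluated at $X_s$ and $X_s^\epsilon$, paired with the \emph{full} $\lambda$) is cleanly separated from the ``small'' part (which involves $\lambda_\varepsilon$ or $\Lambda(\epsilon)$ and is multiplied by $h_0(s,X_s^\epsilon)$ with only linear growth available, hence needs the a priori bound), taking care that $B$ and $W$ contribute separate diffusion terms in Kunita's inequality and that the $\tilde N$ integrals produce both an $L^p(\nu)$ and an $L^2(\nu)$ norm. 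None of these steps is deep, but the estimate must be organized so that every factor of $\|\lambda_\varepsilon\|$ is extracted before invoking $(\mathbf{D_2})$, and so that the constant $C_1$ genuinely does not depend on $\epsilon$ — this is where the uniform-in-$\epsilon$ estimate \eqref{aproxsolestimate} is essential.
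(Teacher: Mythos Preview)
Your proposal is correct and follows essentially the same approach as the paper: subtract the integral representations, split the jump integrand as $(h_0(s,X_s)-h_0(s,X_s^\epsilon))\lambda + h_0(s,X_s^\epsilon)\lambda_\varepsilon$, apply Kunita's inequality, use the Lipschitz bounds on $f,g,h_0$ together with the uniform a priori estimate \eqref{aproxsolestimate} for the error term, and conclude with Gr\"onwall. The paper's proof is organized exactly this way, including the use of $|\Lambda(\varepsilon)|^p=\|\lambda_\varepsilon\|^p_{L^2(\nu)}$ at the end to absorb the Brownian error into the stated bound.
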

 
\begin{proof}
Writing out the integral representation  of $X(s)$ and $X^{\epsilon}(s)$, we have that
\small\begin{align*}
X(s)-X^{\epsilon}(s)&=\int_0^s f(u,X_u)-f(u,X_u^{\epsilon}) du+ \int_0^s g(u,X_u)-g(u,X_u^{\epsilon})dW(u)\\
&+\int_0^s \int_{\mathbb R_0^d}(h_0(u,X_u)-h_0(u,X_u^{\epsilon}))\lambda(z)+ h_0(u,X^{\epsilon}_{u})\lambda_{\varepsilon}(z) \tilde{N}(du,dz)\\
&- \int_0^s h_0(u,X_{u}^{\epsilon})\Lambda_p(\epsilon)  dB(u),\\
X_0-X_0^{{\epsilon}}&= 0.
\end{align*}\normalsize
Let us first consider some estimates for the integrands of $\tilde N$ and $B$. Observe that
\begin{align*}
\| (h_0(u,X_u)&-h_0(u,X_u^{\epsilon}))\lambda+h_0(u,X_u^{\epsilon})\lambda_{\varepsilon}\|_{L^p(\Omega;L^p(\nu))}\\
&\leq
\| (h_0(u,X_u)-h_0(u,X_u^{\varepsilon})\|_{L^p(\Omega;\mathbb{R}^{d\times k})}\|\lambda\|_{L^p(\nu)}+\| h_0(u,X_u^{\epsilon})\|_{L^p(\Omega;\mathbb{R}^{d\times k})}\| \lambda_{\varepsilon}\|_{L^p(\nu)}
\\
&\leq L^{1/p}\| X_u-X_u^{\epsilon}\|_{S^p(\Omega;\mathcal{D})}\|\lambda\|_{L^p(\nu)} +K^{1/p}(1+\| X_u^{\epsilon}\|_{S^p(\Omega;\mathcal{D})})^{1/p}\| \lambda_{\varepsilon}\|_{L^p(\nu)},
\end{align*}
and hence
\begin{align}\label{HestimatE_1}
\begin{split}
\| (h_0(u,X_u)&-h_0(u,X_u^{\epsilon}))\lambda+h_0(u,X_u^{\epsilon})\lambda_{\varepsilon}\|^p_{L^p(\Omega, L^p(\nu))}\\
&\leq 2^{p-1}\Big(L\| X_u-X_u^{\epsilon}\|^p_{S^p(\Omega;\mathcal{D})}\|\lambda\|^p_{L^p(\nu)} +K(1+\| X_u^{\epsilon}\|^p_{S^p(\Omega;\mathcal{D})})\|_{L^p(\Omega)}\| \lambda_{\varepsilon}\|^p_{L^p(\nu)}\Big).\\
\end{split}
\end{align}
In an analogous manner we have that
\begin{align}\label{HestimatE_2}
\begin{split}
\| (h_0(u,X_u)&-h_0(u,X_u^{\epsilon}))\lambda+h_0(u,X_u^{\epsilon})\lambda_{\varepsilon}\|^p_{L^p(\Omega,L^2(\nu))}\\
&\leq 2^{p-1}\Big(L\| X_u-X_u^{\epsilon}\|^p_{S^p(\Omega;\mathcal{D})}\|\lambda\|^p_{L^2(\nu)} +K(1+\| X_u^{\epsilon}\|^p_{S^p(\Omega;\mathcal{D})})\|\lambda_{\varepsilon}\|^p_{L^2(\nu)}
\Big),
\end{split}
\end{align}
and that
\begin{align*}
 \| h_0(u,X_u^{\epsilon})\Lambda(\varepsilon)\|^p_{L^p(\Omega;\mathbb{R}^{d\times n})}\leq K(1+\| X_u^{\epsilon}\|^p_{S^p(\Omega;\mathcal{D})})
|\Lambda(\varepsilon)|^p
\end{align*}
Using Lemma \ref{lem:Kunita}, the Lipschitz condition $(\mathbf{D_1})$, estimates \eqref{HestimatE_1}, \eqref{HestimatE_2}, and Corollary \ref{cor:existenceOfXepsilon} we have that there exist a constant $D':=D'(p,K,L,\lambda)$, independent of $\varepsilon$ such that
\begin{align*}\begin{split}
\alpha_{\varepsilon}(t):&=\mathbb{E}\big[\sup_{-r\leq s\leq t}|^\eta X(s)-^\eta X^{\epsilon}(s)|^p\big]\\
&\leq\int_0^t \Big(\| f(u,X_u)-f(u,X_u^{\epsilon})\|^p_{L^p(\Omega;\mathbb{R}^d)}
+ \| g(u,X_u)-g(u,X_u^{\epsilon})\|^p_{L^p(\Omega;\mathbb{R}^{d\times n})}\\
&+\| (h_0(u,X_u)-h_0(u,X_u^{\epsilon}))\lambda+h_0(u,X_u^{\epsilon})\lambda_{\varepsilon}\|^p_{L^p(\Omega;L^p(\nu))}\\
&+\| (h_0(u,X_u)-h_0(u,X_u^{\epsilon}))\lambda+h_0(u,X_u^{\epsilon})\lambda_{\varepsilon}\|^p_{L^p(\Omega,L^2(\nu))}\\
&+\| h_0(u,X^{\epsilon}_u)\Lambda(\varepsilon)\|^p_{L^p(\Omega;\mathbb{R}^{d\times n})}\Big)du
\end{split}\\
&\leq D'\int_0^t   \Big(\| X_u-X_u^{\epsilon}\|^p_{S^p(\Omega;\mathcal{D})}+\big(1+\| X_u^{\epsilon}\|^p_{S^p(\Omega;\mathcal{D})}\big)\big(\|\lambda_{\varepsilon}\|^p_{L^2(\nu)}+\|\lambda_{\varepsilon}\|^p_{L^p(\nu)}+|\Lambda(\varepsilon)|^p \big) \Big)du\\
&\leq D'\int_0^t \alpha_{\varepsilon}(u)du +tD'\big(1+e^{Dt}(Dt+\|\eta\|^p_{S^p(\Omega;\mathcal{D})})\big)\big(\|\lambda_{\varepsilon}\|^p_{L^2(\nu)}+\|\lambda_{\varepsilon}\|^p_{L^p(\nu)}+|\Lambda(\varepsilon)|^p\big).
\end{align*}
Now, set $B_t := tD'\big(1+e^{Dt}(Dt+\|\eta\|^p_{S^p(\Omega;\mathcal{D})})\big)$ which is a non-decreasing function in $t$ and hence by Gr\"{o}nwall's inequality, it follows that 
\begin{align*}
 \alpha_{\varepsilon}(t)\leq B_t e^{D't}\big(\|\lambda_{\varepsilon}\|^p_{L^2(\nu)}+\|\lambda_{\varepsilon}\|^p_{L^p(\nu)}+|\Lambda(\varepsilon)|^p\big).
\end{align*}
Since $|\Lambda(\varepsilon)|^p=\|\lambda_{\varepsilon}\|^p_{L^2(\nu)}$, the result holds with $A:=\max\{2B_T, D'\}$.
\end{proof}
\begin{remark}\label{Rem:generalLambda2}
We have chosen to scale the Brownian motions $B^j$ in  the equation \eqref{aproxSFDE_1}  for $X^{\epsilon}$ with $\Lambda^{i,j}(\epsilon):=\|\lambda^{i,j}_\varepsilon\|_{L^2(\nu_j)}$. However, if we return to \eqref{eq:replacable integral}-\eqref{eq:replacer inetgral}, we could let $\Lambda_\epsilon$ be any $\mathbb R^{k\times n}$-valued function $\Lambda(\varepsilon)\geq 0$, $\varepsilon\geq 0$, bounded from above and converging to $0$ as $\varepsilon \rightarrow0$. Corollary \ref{cor:existenceOfXepsilon} and Theorem \ref{thm:Robustness} still hold,  with the inequality \eqref{ineq:Robustness} replaced by
\begin{align}
 E\left[\sup_{-r\leq s\leq t}|^\eta X(s)-   {^\eta  X}^{\epsilon}(s)|^p\right]\leq A'e^{A't}(\|\lambda_{\varepsilon}\|^p_{L^2(\nu)}+\|\lambda_{\varepsilon}\|^p_{L^p(\nu)}+|\Lambda(\varepsilon)|^p).
\end{align}
This can be easily seen by reexamining the proofs of Corollary \ref{cor:existenceOfXepsilon} and Theorem \ref{thm:Robustness}.
\end{remark}

\section{It\^{o}'s formula}\label{SEC:Ito}

In this section we aim at deriving It\^{o}'s formula for the SFDDE's studied in Section \ref{TheEquationsExistenceUniquenessMomentestimates3}, which we recall, have the form \eqref{eq:Main1},
\begin{align*}
\begin{split}
 d X(t)&=f(t, X_t) d t+g(t,X_t)d W(t)+ \int_{\mathbb{R}_0}h(t,X_t,z)\tilde N(d t, d z)\\
X_0&=\eta \, ,
\end{split}
\end{align*}
where $X_t$ is the segment of the process $X$ in $[t-r,t]$, with $r>0$ a finite delay, taking values in a suitable path space, and $X(t) \in \R^d$ the present value of the process $X$. For the whole section we work in the $M^p$-framework. See Section \ref{sec:M2existenceAndUniqueness}. 
Moreover, we assume that $f$, $g$ and $h$ are \emph{deterministic} functionals, see Remark \ref{REM:DetF}.

The main problem, when dealing with the SFDDE \eqref{eq:Main1} is that the infinite dimensional process $(X_t)_{t \in [0,T]}$ fails, in general, to be a semimartingale and standard It\^{o} calculus does not apply. In order to overcome this problem several approaches have been used in the literature. 

The first attempts go back to \cite{MR1652338,Yan} where an It\^{o}-type formula for continuous SFDDE was proved via Malliavin calculus. More recently, exploiting the concepts of \textit{horizontal derivative} and \textit{vertical derivative}, a path-wise It\^{o} calculus for non-anticipative stochastic differential equation was derived in \cite{cont2010change,cont2013functional}. 

In \cite{MR3272616}, an It\^{o} formula for Banach-valued continuous processes was proved exploiting the \textit{calculus via regularisation}, where an application to window processes (see \cite[Definition 1.4]{MR3272616}) is also provided. Several works have followed studying It\^{o}-type formulae for delay equations exploiting the \textit{calculus via regularisation} and showing that the Banach-valued setting and the path-dependent setting can be in fact connected, see, e.g. \cite{c14cal,c15fun}. Eventually, the connection between the Banach space stochastic calculus and the path-wise calculus was made definitely clear in \cite{c14reg,f13in}.

We remark that the literature on It\^{o} formulae by the calculus via regularisation deals with equations driven by continuous noises. In this paper, we focus on the SFDDE's with jumps, thus extending the existing literature in this respect. We have chosen to consider the approach of the \textit{calculus via regularisation}, first introduced in \cite{MR1350257,MR1367665}, which was proved to be well-suited when dealing with infinite-dimensional processes or in the non-semimartingale case, see e.g. \cite{c14cal,c14reg,coviello2011stochastic,di2012generalized,MR3272616}. In particular, we prove an It\^{o} formula for the SFDDE \eqref{eq:Main1} with values in $M^p$ and we show that our result is coherent with those of \cite{MR1652338,Yan}. In the Appendix we provide a connection with the path-dependent calculus developed in \cite{cont2010change,cont2013functional}.

Recall that, for a finite delay $r>0$, $L^p:=L^p([-r,0];\R^d)$ endowed with the standard norm $\|\cdot\|_{L^p}$, $p\in [2,\infty)$. In what follows we implicitly identify the topological dual of $L^p$, i.e. $ (L^p)^\ast$, with $L^q$ being $\frac{1}{p}+\frac{1}{q}=1$, via the natural isomorphism given by
\begin{align*}
J:&L^q \rightarrow (L^p)^\ast\\
g &\mapsto J(g)= \Scal{g,\cdot},
\end{align*}
where $J(g)$ acts on $L^p$ as follows
$$J(g)(f) = \Scal{g,f} = \int_{-r}^0 g(s)\cdot f(s)ds, \quad g\in L^q, \quad f\in L^p,$$
being $\cdot$ in the integral the usual scalar product in $\R^d$. It is well-known that $J$ is a continuous linear isomorphism and hence, with a slight abuse of notation, we just write $h\in (L^p)^\ast$ when we actually mean $J^{-1}(h)\in L^q$, i.e. $(L^p)^\ast \cong L^q$.

Moreover, we denote by $C^1(L^p)$ the space of once Fr\'{e}chet differentiable, not necessarily linear, functionals $F:L^p\rightarrow \R$ with continuous derivative, that is $DF:L^p \rightarrow L(L^p,\R)$ where $L(L^p,\R)$ denotes the space of continuous linear operators from $L^p$ to $\R$. Now, since $F$ is $\R$-valued, we actually have that $L(L^p,\R)=(L^p)^\ast$. Hence we can regard $DF(f)$, $f\in L^p$ as an element in $L^q$ via $J^{-1}$. In a summary, we identify $DF(f)$ with $J^{-1}(DF(f))$ and simply write $DF: L^p \rightarrow L^q$ so that
$$DF(f)(g) = {}_q \langle DF(f),g\rangle_p = \int_{-r}^0 DF(f)(s)\cdot g(s) ds, \quad f\in L^p, \quad g\in L^q,$$
where the first equality is, by an abuse of notation, meant as an identification.

Also, recall that $|\cdot|$ denotes the Euclidean norm in $\R^d$. Finally, recall that $M^p= L^p \times \R^d$ endowed with the standard product norm.

Let the SFDDE
\begin{equation}\label{EQN:SFDDE1}
\begin{cases}
d X(t) = f(t,X_t,X(t)) dt + g(t,X_t,X(t)) dW(t) + \int_{\R_0}h(t,X_t,X(t))(z) \tilde{N}(dt,dz)\, ,\\
(X_0,X(0))= (\eta,x) \in M^p \,,
\end{cases}
\end{equation}
for $t\in [0,T]$, $T<\infty$. We assume that $f:[0,T]\times M^p \to \R^d$, $g:[0,T]\times M^p \to \R^{d \times m}$ and $h:[0,T]\times M^p \times \R_0 \to \R^{d \times n}$ satisfy Assumptions $(\mathbf{L_1})$ and $(\mathbf{L_2})$ so that Theorem \ref{thm:existenceAndUniqueness_II} holds and equation \eqref{EQN:SFDDE1} admits a unique strong solution.

In the sequel every process is indexed by the time $t \in [0,T]$ and following \cite{MR3272616}, if necessary, we extend the process $X = (X(t))_{t \in [0,T]}$ to the positive real line as follows
\[
X(t) := 
\begin{cases}
X(t) & \mbox{ for } t \in [0,T]\, ,\\
X(T) & \mbox{ for } t > T\, ,\\
\end{cases}
\, .
\]
Next, we consider the definition of forward integral.

\begin{definition}\label{DEF:Xseps}
Let $X = \{X(s), s \in [0,T]\}$ and $Y= \{Y(s), s \in [0,T]\}$ two $\RR^d-$valued process. For every $ t \in [0,T]$ we define the forward integral of $Y$ w.r.t. $X$ by $\int_0^t Y(s)\, \cdot  dX(s)$ as the following limit,
\begin{equation}\label{EQN:Int}
\int_0^t Y(s)\, \cdot  dX(s):= \lim_{\epsilon \downarrow 0} \int_0^t Y(s)\,\cdot  \frac{X(s+\epsilon)-X(s)}{\epsilon}ds\, ,
\end{equation}
where the convergence is uniformly on compacts in probability (ucp).

Similarly, let $X = \{X_s, s \in [0,T]\}$, and $Y= \{Y_s, s \in [0,T]\}$, be $L^p$-valued and $L^q$-valued processes, respectively. For every $ t \in [0,T]$ we define the $L^p-$forward integral of $Y$ w.r.t. $X$ as the following limit,
\begin{equation}\label{EQN:Int2}
 \int_0^t \int_{-r}^0 Y_s(\theta) \cdot dX_s(\theta) := \lim_{\epsilon \searrow 0} \int_0^t \int_{-r}^0 Y_s(\theta)\, \cdot  \frac{X_{s+\epsilon}(\theta)-X_s(\theta)}{\epsilon} d \theta ds \, ,
\end{equation}
where the convergence is uniformly on compacts in probability. We introduce the short-hand notation:
\[
\int_0^t {}_q\langle Y_s, dX_s \rangle_p :=  \int_0^t \int_{-r}^0 Y_s(\theta) \cdot dX_s(\theta) \, .
\]
\end{definition}

Recall that a sequence of real-valued processes $\{X^n\}_{n \geq 1}$ converges to a process $X$ \textit{uniformly on compacts in probability} (ucp), if for each $t \in [0,T]$ we have that
\[
\sup_{0 \leq s \leq t}|X^n_s - X_s| \to 0\, ,
\]
in probability. See e.g. \cite[p.57]{MR2273672}. In this section, if not otherwise stated, any limit will be taken in the ucp sense.

\begin{remark}\label{REM:ucpC}
Following \cite{MR3272616}, in Definition \ref{DEF:Xseps} we have considered the ucp limit. In fact the space of c\`{a}dl\`{a}g functions is a metrizable space with the metric induced by the \textit{ucp} topology, see, e.g. \cite[p.57]{MR2273672}. This implies that, being the approximating sequence in the right-hand-side of equation \eqref{EQN:Int} \cadlag, the ucp convergence ensures that the limiting process, that is the forward integral, is also \cadlag.
\end{remark}

Let us now introduce the notation we will use in the present work.

\begin{definition}\label{DEF:Der}
Let $F: [0,T] \times L^p \times \R^d \to \R$ a given function, we say that 
\[
F \in C^{1,1,2}\left ([0,T] \times L^p \times \R^d \right )\, ,
\]
if $F$ is continuously differentiable w.r.t. the first variable, Fr\'{e}chet differentiable with continuous derivative w.r.t. the second variable, and twice continuously differentiable w.r.t. the third variable. 

We thus denote by $\partial_t$ the derivative w.r.t. to time, $D_iF$ the Fr\'{e}chet derivative w.r.t. the $i$-th component of the segment $X_t$ and $\partial_i$ the derivative w.r.t. the $i$-th component of the present state $X(t)$ and finally, $\partial_{i,j}$ the second order derivative w.r.t. the $i,j$-th component of $X(t)$.

We will then define the Fr\'{e}chet gradient w.r.t. the segment as
\[
D := \left (D_1, \dots,D_d\right )\, ,
\] 
the gradient w.r.t. the present state
\[
\nabla_x := \left (\partial_{1},\dots, \partial_{d}\right )\, ,
\]
and the Hessian matrix w.r.t. the present state
\[
\nabla_x^2 := \left (\partial_{i,j}\right )_{i,j=1,\dots,d}\, .
\]
\end{definition}

\begin{definition}\label{DEF:WeRi}
Let $\eta \in W^{1,p}([-r,0];\R^d)=:W^{1,p}$, then we define by $\partial_{\theta,i} \eta$ and $\partial_{\theta,i}^+ \eta$ the weak derivative and the right weak derivative, respectively, of the $i$-th component of $\eta$. Accordingly we define the gradient as
\[
\nabla_\theta := \left (\partial_{\theta,1},\dots, \partial_{\theta,d}\right )\, ,\quad \mbox{ resp. } \nabla^+_\theta := \left (\partial^+_{\theta,1},\dots, \partial^+_{\theta,d}\right )\, .
\]
\end{definition}

Eventually, in proving It\^{o}'s formula, we will need the notion of \textit{modulus of continuity} of operators between infinite-dimensional normed spaces.

\begin{definition}[Modulus of continuity]\label{DEF:MC}
Let $\left (Y_1, \| \cdot \|_{Y_1}\right )$ and $\left (Y_2, \| \cdot \|_{Y_2}\right )$ be two normed spaces and $F: Y_1 \to Y_2$ a uniformly continuous function. We define the \textit{modulus of continuity} of $F$ as
\[
\varpi(\epsilon) := \sup_{\|y-y'\|_{Y_1} \leq \epsilon} \| F(y) -F(y')\|_{Y_2}, \quad \epsilon >0 .
\]
\end{definition}

We thus have the following It\^{o}'s formula for SFDDE \eqref{EQN:SFDDE1}.

\begin{theorem}[It\^{o}'s formula]\label{THM:Ito}
Let $X$ be the solution to equation \eqref{EQN:SFDDE1} and let $F:[0,T] \times L^p \times \R^d \to \R$ such that $F \in C^{1,1,2}\left ([0,T] \times L^p \times \R^d \right )$ and such that $DF(t,\eta,x)\in W^{1,q}$, ($q$ such that $\frac{1}{p}+\frac{1}{q}=1$) for any $t\in [0,T]$, $\eta\in L^p$ and $x\in \R^d$ and $\nabla_{\theta}DF(t,\cdot,x):L^p \rightarrow L^q$ is uniformly continuous. Then the following limit exists in the ucp sense,
\begin{align}\label{EQN:ForInt}
\lim_{\epsilon \searrow 0} \frac{1}{\epsilon} \int_0^t {}_q \langle DF(s,X_s,X(s)), X_{s+\epsilon}-X_s\rangle_p \,ds =: \int_0^t {}_q\langle DF(s,X_s,X(s)), d X_s\rangle_p.
\end{align}
Moreover, for $t\in [0,T]$, we have that
\begin{align}\label{EQN:Ito}
\begin{split}
&F(t,X_t,X(t))=F(0,X_0,X(0)) +\int_0^t \partial_t F(s,X_s,X(s)) ds+ \int_0^t \Scal{DF(s,X_s,X(s))\, , \, d X_s} \\
&+ \int_0^t \nabla_x F(s,X_s,X(s))\cdot d X (s) +\frac{1}{2} \int_0^t Tr \left [g^*(s,X_s,X(s)) \nabla_x^2 F(s,X_s,X(s))g(s,X_s,X(s))\right ] d s \\
&+ \int_0^t \int_{\R_0} \left (F(s,X_s,X(s)+h(s,X_s,X(s))(z))-F(s,X_s,X(s))\right ) N(ds,dz) \\
&-  \int_0^t \int_{\R_0} \nabla_x F(s,X_s,X(s)) h(s,X_s,X(s))(z) N(ds,dz)\, ,
\end{split}
\end{align}
holds, $P$-a.s., where we have denoted by $Tr$ the trace and by $g^*$ the adjoint of $g$ and the fourth term in equation \eqref{EQN:Ito} has to be intended component-wise, that is
\[
\begin{split}
&\int_0^t \int_{\R_0} \left (F(s,X_s,X(s)+h(s,X_s,X(s))(z))-F(s,X_s,X(s))\right ) N(ds,dz) \\
&:= \sum_{i=1}^n \int_0^t \int_{\R_0} \left (F(s,X_s,X(s)+h^{\cdot, i}(s,X_s,X(s))(z))-F(s,X_s,X(s))\right ) N^i(ds,dz)\, .
\end{split}
\]
\end{theorem}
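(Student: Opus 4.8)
The plan is to combine a finite-dimensional Itô formula applied to the present-state variable $X(t)$ with a careful analysis of the forward integral in the segment variable, using the calculus via regularisation. First I would split the increment $F(t,X_t,X(t))-F(0,X_0,X(0))$ along a partition-free ``regularisation'' scheme: for $\epsilon>0$ write
\[
F(t,X_t,X(t))-F(0,X_0,X(0)) = \frac{1}{\epsilon}\int_0^t \bigl(F(s+\epsilon,X_{s+\epsilon},X(s+\epsilon)) - F(s,X_s,X(s))\bigr)\,ds + o(1),
\]
where the correction term comes from the endpoints and vanishes as $\epsilon\to0$ by continuity of $F$ and the extension convention $X(t)=X(T)$ for $t>T$. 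Then I would decompose the integrand into three pieces by inserting intermediate points: a \emph{time} increment $F(s+\epsilon,\cdot,\cdot)-F(s,\cdot,\cdot)$ handled by $\partial_t F$ and its continuity; a \emph{segment} increment $F(s,X_{s+\epsilon},X(s))-F(s,X_s,X(s))$ which, by Fréchet differentiability and the identification $DF\in L^q$, is $\prescript{}{q}{\langle} DF(s,X_s,X(s)),X_{s+\epsilon}-X_s\rangle_p$ plus a remainder controlled by the modulus of continuity $\varpi$ of $\nabla_\theta DF$; and a \emph{present-state} increment $F(s,X_{s+\epsilon},X(s+\epsilon))-F(s,X_{s+\epsilon},X(s))$, to which I apply the classical Itô formula for the jump-diffusion $X(\cdot)$ in $\R^d$ (valid by Theorem \ref{thm:existenceAndUniqueness_II}, since $X$ is a semimartingale), yielding the $\nabla_x F\cdot dX$ term, the $\frac12\mathrm{Tr}[g^*\nabla_x^2 F g]$ term, and the two compensated-then-recombined Poisson integrals written with $N(ds,dz)$.

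The delicate point is the segment term. Here the key observation, exactly as in the continuous case of \cite{c15fun,f13in}, is that for a $\cadlag$ path $s\mapsto X_s\in L^p$ arising from a $\cadlag$ $\R^d$-valued process, the increment $X_{s+\epsilon}-X_s$ in $L^p$ is, up to an $L^p$-negligible correction near $\theta=0$, a ``shift'' that can be rewritten via the weak derivative: for $\eta\in W^{1,q}$ one has the integration-by-parts-type identity relating $\langle \eta, X_{s+\epsilon}-X_s\rangle$ to $\int_s^{s+\epsilon}\langle \partial_\theta \eta, X_u\rangle\,du$ plus boundary terms at $\theta=0,-r$. Because the hypothesis guarantees $DF(s,X_s,X(s))\in W^{1,q}$ with $\nabla_\theta DF(s,\cdot,x)$ uniformly continuous, I can pass the $\frac1\epsilon$ inside, use Fubini, and identify the ucp limit; the uniform continuity of $\nabla_\theta DF$ is what lets me replace $DF(s,X_s,X(s))$ by $DF(s,X_u,X(u))$ inside the inner integral over $u\in[s,s+\epsilon]$ up to an error tending to $0$ uniformly on compacts in probability. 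This is precisely where the modulus-of-continuity machinery of Definition \ref{DEF:MC} is used, and establishing the ucp (not merely in-probability) convergence of \eqref{EQN:ForInt} — which also secures the $\cadlag$ version of the forward integral by Remark \ref{REM:ucpC} — is the main obstacle.

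Once the three pieces are assembled, I would collect terms: the time piece gives $\int_0^t\partial_t F\,ds$; the segment piece gives $\int_0^t \prescript{}{q}{\langle} DF(s,X_s,X(s)),dX_s\rangle_p$ as defined in \eqref{EQN:ForInt}; and the present-state piece gives the remaining four terms of \eqref{EQN:Ito}. Throughout I would rely on the moment estimate \eqref{ineq:LpEstimate} from Theorem \ref{thm:existenceAndUniqueness_II} to dominate the various remainders (products of increments, the second-order Taylor remainder in $x$, and the modulus-of-continuity errors) so that they vanish in the ucp sense; the growth of $f,g,h$ from $(\mathbf{L_1})$–$(\mathbf{L_2})$ ensures all stochastic integrals appearing are well defined. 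Finally, I would check coherence with \cite{MR1652338,Yan}: when $F$ depends on the segment through a sufficiently smooth functional, the forward integral term reduces, via the identification with the operators of \cite{Yan}, to the expected drift-type correction, which I would record as a remark rather than belabour in the proof.
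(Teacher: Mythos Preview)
Your proposal is correct and follows essentially the same route as the paper: the same regularisation scheme, the same three-way split into time, present-state, and segment increments (with identical intermediate points), the same invocation of a finite-dimensional It\^o result for the $X(\cdot)$ piece, and the same use of the $W^{1,q}$ hypothesis, integration by parts in $\theta$, and the modulus of continuity of $\nabla_\theta DF$ to identify the forward integral. The only technical ingredient you do not name explicitly is the Hardy--Littlewood maximal function, which the paper uses to dominate the averaged segments $\epsilon^{-1}\int_\beta^{\beta+\epsilon}|X_s(\alpha)|\,d\alpha$ uniformly in $\epsilon$ and thereby justify dominated convergence when passing to the limit in the segment term.
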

\begin{proof}
Let $t\in [0,T]$. First, observe that for $\varepsilon>0$ small enough, we have
\begin{align}\label{EQN:F}
\begin{split}
&\frac{1}{\epsilon} \int_0^t F(s+\epsilon,X_{s+\epsilon},X(s+\epsilon))-F(s,X_s,X(s))\, ds =\\
&\frac{1}{\epsilon} \int_\epsilon^{t+\epsilon} F(s,X_{s},X(s)) ds - \frac{1}{\varepsilon}\int_0^t F(s,X_s,X(s))\, ds =\\
&=\frac{1}{\epsilon} \int_t^{t+\epsilon} F(s,X_s,X(s)) ds - \frac{1}{\epsilon} \int_0^\epsilon F(s,X_s,X(s)) ds \, ,
\end{split}
\end{align}
which, by the continuity of $F$ and $X_s$ and the right-continuity of $X(s)$, $s\in [0,T]$, recalling remark \ref{REM:ucpC} and arguing as in \cite[eq. (4.6)]{BanRus}, converges ucp to 
\begin{align*}
F(t,X_t,X(t))-F(0,X_0,X(0))\, .
\end{align*}
The first part of \eqref{EQN:F} can be rewritten as
\begin{align*}
&\frac{1}{\epsilon} \int_0^t F(s+\epsilon,X_{s+\epsilon},X(s+\epsilon))-F(s,X_s,X(s))\, ds \\
&\hspace{2cm}= \underbrace{\frac{1}{\epsilon} \int_0^t F(s+\epsilon,X_{s+\epsilon},X(s+\epsilon))-F(s,X_{s+\epsilon},X(s+\epsilon))\, ds}_{J^1_\epsilon} \\
&\hspace{3cm}+\underbrace{\frac{1}{\epsilon} \int_0^t F(s,X_{s+\epsilon},X(s+\epsilon))-F(s,X_{s+\epsilon},X(s))\, ds}_{J^2_\epsilon} \\
&\hspace{3cm}+ \underbrace{\frac{1}{\epsilon} \int_0^t F(s,X_{s+\epsilon},X(s))-F(s,X_s,X(s))\, ds}_{J^3_\epsilon}\, .
\end{align*}
Following the same arguments as in the proof of \cite[Theorem 5.2]{MR3272616} we can show that
\[
\lim_{\epsilon \searrow 0} J^1_\epsilon = \int_0^t \partial_t F(s,X_s,X(s)) ds, \quad \mbox{ucp}.
\]
Let us now consider $J^2_\epsilon$. A straightforward application of \cite[Corollary 4.4]{BanRus} implies that 
\begin{align*}
\begin{split}
J^2_\epsilon \to & \int_0^t \nabla_x F(s,X_s,X(s)) \cdot  d X (s) \\
&+\frac{1}{2} \int_0^t Tr \left [g^*(s,X_s,X(s)) \nabla_x^2 F(s,X_s,X(s))g(s,X_s,X(s))\right ] d s \\
&+\int_0^t \int_{\R_0} \left (F(s,X_s,X(s)+h(s,X_s,X(s),z))-F(s,X_s,X(s))\right ) N(ds,dz) \\
&-  \int_0^t \int_{\R_0} \nabla_x F(s,X_s,X(s)) h(s,X_s,X(s),z) N(ds,dz)\, , \quad \mbox{ as } \epsilon \searrow 0 \, .
\end{split}
\end{align*}

Let us now show that
\begin{align*}
\lim_{\epsilon \searrow 0} J^3_\epsilon = \int_0^t \Scal{DF(s,X_s,X(s))\, , \, d X_s} ds\,.
\end{align*}
By an application of the infinite-dimensional version of Taylor's theorem of order one (see e.g. \cite[Ch. 4, Theorem 4.C]{Zeidler95}), we obtain
\begin{equation}\label{EQN:Tay}
\begin{split}
&\frac{1}{\epsilon} \int_0^t F(s,X_{s+\epsilon},X(s))-F(s,X_s,X(s))\, ds  \\
=& \frac{1}{\epsilon}\int_0^t \int_0^1 \Scal{DF(s,X_{s} + \tau (X_{s+\epsilon} - X_s),X(s)) \, ,  \,  X_{s+\epsilon} -X_{s}} \, d \tau ds \\
=&\frac{1}{\epsilon}\int_0^t \int_0^1 \int_{-r}^0 DF(s,X_{s} + \tau (X_{s+\epsilon} - X_s),X(s))(\alpha) \cdot (X(s+\epsilon+\alpha) -X(s+\alpha)) d \alpha d\tau ds \\
=&-\int_0^t \underbrace{\frac{1}{\epsilon} \int_0^1 \int_{-r}^0 \left (DF(s,X_{s,s+\epsilon}^\tau,X(s) )(\alpha+\epsilon)-DF(s,X_{s,s+\epsilon}^\tau,X(s) )(\alpha)\right ) \cdot  X(s+\epsilon+\alpha) d \alpha d \tau}_{J^{3,1}_\epsilon} ds\\
&+\int_0^t\underbrace{  \frac{1}{\epsilon} \int_0^1 \int_{-r}^0 DF(s,X_{s,s+\epsilon}^\tau,X(s))(\alpha+ \epsilon)\cdot  X(s+\alpha+\epsilon) d \alpha d \tau ds}_{J^{3,2}_\epsilon} \\
&- \int_0^t\underbrace{ \frac{1}{\epsilon} \int_0^1 \int_{-r}^0 DF(s,X_{s,s+\epsilon}^\tau,X(s))(\alpha)\cdot  X(s+\alpha) d \alpha d \tau}_{J^{3,2}_\epsilon}ds \, ,
\end{split}
\end{equation}
where we have denoted by $X_{s,s+\epsilon}^\tau := X_{s} + \tau (X_{s+\epsilon} - X_s)$. We apply the change of variables $g(\alpha) = \alpha + \epsilon$ to the first term of $J^{3,2}_\epsilon$ in Equation \eqref{EQN:Tay} in order to obtain
\begin{align*}
J^{3,2}_\epsilon =&\, \frac{1}{\epsilon} \int_0^1 \int_{-r+\epsilon}^\epsilon DF(s,X_{s,s+\epsilon}^\tau,X(s))(\alpha) \cdot X(s+\alpha) d \alpha d \tau \\
&-  \frac{1}{\epsilon} \int_0^1 \int_{-r}^0 DF(s,X_{s,s+\epsilon}^\tau, X(s))(\alpha)\cdot  X(s+\alpha) d \alpha d \tau \\
=&\, \underbrace{\frac{1}{\epsilon} \int_0^1 \int_{0}^\epsilon DF(s,X_{s,s+\epsilon}^\tau,X(s))(\alpha)\cdot  X(s+\alpha) d \alpha d \tau }_{J^{3,2,1}_\epsilon}\\
& -\underbrace{\frac{1}{\epsilon} \int_0^1 \int_{-r}^{-r+\epsilon} DF(s,X_{s,s+\epsilon}^\tau,X(s))(\alpha)\cdot  X(s+\alpha) d \alpha d \tau}_{J^{3,2,2}_\epsilon}\, .
\end{align*}
We thus have, from the continuity of $DF$ and $X_s$, that
\[
\lim_{\epsilon \searrow 0 } J^{3,2,1}_\epsilon = DF(s,X_s,X(s))(0)\cdot  X(s)\, ,\quad \lim_{\epsilon \searrow 0 } J^{3,2,2}_\epsilon = DF(s,X_s,X(s)) (-r)\cdot  X(s-r)\, .
\]

Let $\nabla_{\theta}DF(s,X_{s,s+\epsilon}^\tau,X(s))$ denote a version of the weak derivative of $DF(s,X_{s,s+\epsilon}^\tau,X(s))\in W^{1,q}$. Consider $J^{3,1}_\epsilon$. Using the mean value-theorem and interchanging the order of integration by Fubini's theorem we have
\begin{equation}\label{EQN:J2Etc}
\begin{split}
J^{3,1}_\epsilon =& \, \frac{1}{\epsilon} \int_0^1 \int_{-r}^0  \int_{\alpha}^{\alpha+\epsilon} \nabla_\theta DF(s,X_{s,s+\epsilon}^\tau,X(s))(\beta)\,  d\beta   \cdot  X(s+\epsilon +\alpha) d \alpha d \tau \\
=& \, \frac{1}{\epsilon} \int_0^1 \int_{-r}^{-r+\epsilon}  \int_{-r}^{\beta} \nabla_\theta DF(s,X_{s,s+\epsilon}^\tau,X(s))(\beta) \cdot  X(s+\epsilon+\alpha) \, d\alpha d\beta d \tau \\
&+\frac{1}{\epsilon} \int_0^1 \int_{-r+\epsilon}^0 \int_{\beta-\epsilon}^{\beta} \nabla_\theta DF(s,X_{s,s+\epsilon}^\tau,X(s))(\beta) \cdot  X(s+\epsilon+\alpha) \, d \alpha d\beta d \tau \\
&+ \frac{1}{\epsilon} \int_0^1 \int_{0}^{\epsilon}  \int_{\beta-\epsilon}^{0} \nabla_\theta DF(s,X_{s,s+\epsilon}^\tau,X(s))(\beta)\cdot  X(s+\epsilon+\alpha) \, d \alpha d\beta d \tau.
\end{split}
\end{equation}

Now, we add and subtract integral terms so that the second integral on the right-hand side of \eqref{EQN:J2Etc} goes from $-r$ to $0$, that is

\begin{equation}\label{EQN:J2Etcbis}
\begin{split}
J^{3,1}_\epsilon &= \frac{1}{\epsilon} \int_0^1 \int_{-r}^0  \int_{\beta-\epsilon}^{\beta} \nabla_\theta DF(s,X_{s,s+\epsilon}^\tau,X(s))(\beta)\cdot  X(s+\epsilon+\alpha) \, d\alpha d\beta d \tau \\
&+\frac{1}{\epsilon} \int_0^1 \int_{0}^\epsilon \int_{\beta-\epsilon}^{0} \nabla_\theta DF(s,X_{s,s+\epsilon}^\tau,X(s))(\beta) \cdot  X(s+\epsilon+\alpha) \, d \alpha d\beta d \tau \\
&- \frac{1}{\epsilon} \int_0^1 \int_{-r}^{-r +\epsilon}  \int_{\beta-\epsilon}^{-r} \nabla_\theta DF(s,X_{s,s+\epsilon}^\tau,X(s))(\beta) \cdot  X(s+\epsilon+\alpha) \, d \alpha d\beta d \tau.
\end{split}
\end{equation}
Then, Lebesgue's differentiation theorem implies that the second and third integral on the right-hand side of \eqref{EQN:J2Etcbis} converge to $0$ as $\epsilon \to 0$. Concerning the first integral on the right-hand side of \eqref{EQN:J2Etcbis}, we add and subtract the corresponding terms in order to have
\begin{align}\label{EQN:J2Etc2}
\begin{split}
&\frac{1}{\epsilon} \int_0^1 \int_{-r}^0  \int_{\beta-\epsilon}^{\beta} \nabla_\theta DF(s,X_{s,s+\epsilon}^\tau,X(s))(\beta) \cdot  X(s+\epsilon+\alpha) \, d \alpha d\beta d \tau \\
&= \underbrace{ \frac{1}{\epsilon} \int_0^1 \int_{-r}^0 \left (\nabla_\theta DF(s,X_{s,s+\epsilon}^\tau,X(s))(\beta)-\nabla_\theta DF(s,X_{s},X(s))(\beta)\right )\cdot \left ( \int_{\beta-\epsilon}^{\beta} X(s+\epsilon+\alpha) d \alpha \right )  d\beta d \tau}_{J^{3,1,1}_\epsilon} \\
&+ \underbrace{ \frac{1}{\epsilon} \int_0^1 \int_{-r}^0  \nabla_\theta DF(s,X_{s},X(s))(\beta) \cdot \left ( \int_{\beta-\epsilon}^{\beta} X(s+\epsilon+\alpha) d \alpha \right ) d\beta d \tau }_{J^{3,1,2}_\epsilon}\, .
\end{split}
\end{align}
Using H\"{o}lder's inequality with exponents, $p,q\geq 2$, $\frac{1}{p}+\frac{1}{q}=1$ on $J^{3,1,1}_\epsilon$ we have
\begin{align*}
&|J^{3,1,1}_\epsilon|\\
&\leq \int_0^1 \|\nabla_\theta DF(s,X_{s,s+\epsilon}^\tau,X(s))-\nabla_\theta DF(s,X_{s},X(s))\|_{L^q} d\tau \left(\int_{-r}^0 \left|\frac{1}{\epsilon} \int_{\beta}^{\beta+\epsilon} X_s(\alpha) d \alpha \right|^p d\beta\right)^{1/p}\\
&\leq \int_0^1 \|\nabla_\theta DF(s,X_{s,s+\epsilon}^\tau,X(s))-\nabla_\theta DF(s,X_{s},X(s))\|_{L^q} d\tau \, \|M[X_s]\|_{L^p},
\end{align*}
where
$$M[X_s](\beta):= \sup_{\epsilon >0}\frac{1}{\epsilon} \int_{\beta}^{\beta+\epsilon} \left|X_s(\alpha)\right| d \alpha,$$
is the Hardy-Littlewood maximal operator, which is a (non-linear) bounded operator from $L^p$ to $L^p$, $p>1$. Hence, we can apply Lebesgue's dominated convergence theorem and the fact that by Lebesgue's differentiation we have
$$\lim_{\epsilon \searrow 0}\left(\int_{-r}^0 \frac{1}{\epsilon} \int_{\beta-\epsilon}^{\beta} \left|X(s+\epsilon+\alpha)\right|^p d \alpha   d\beta\right)^{1/p} = \|X_s\|_{L^p}.$$
The above arguments in connection with the uniform continuity of $\nabla_{\theta}DF(s,X_s,X(s))$ implies the following estimate for $J^{3,1,1}_\epsilon$: there is a constant $C>0$ independent of $\epsilon$ such that
\begin{align*}
|J^{3,1,1}_\epsilon| \leq C \varpi (\epsilon) \| X_s\|_{L^p} \to 0 \, , \quad \mbox{ as } \epsilon \to 0\, ,
\end{align*}
where $\varpi(\epsilon)$ denotes the modulus of continuity of $\nabla_{\theta} DF(s,\cdot,X(s))$ from Definition \ref{DEF:MC}.
Further, we can formally apply integration by parts to $J^{3,1,2}_\epsilon$ in order to obtain
\begin{align}
J^{3,1,2}_\epsilon =  \frac{1}{\epsilon}DF(s,X_s,X(s))(\beta)\cdot  \int_{\beta-\epsilon}^{\beta}X_{s+\epsilon}(\alpha)d\alpha \Bigg|_{-r}^{0} - \int_{-r}^0 DF(s,X_s,X(s))(\beta)\cdot  \frac{X_{s+\epsilon}(\beta) - X_s(\beta)}{\epsilon} d\beta.
\end{align}
Then it follows that
\begin{align*}
J^{3,1,2}_\epsilon \to  DF(s,X_s,X(s))(-r)\cdot  X(s-r)-DF(s,X_s,X(s))(0)\cdot  X(s) -\Scal{DF(s,X_s,X(s)) \, , \, dX_s},
\end{align*}
as $\epsilon \searrow 0$. Altogether, we have finally shown that
\begin{align*}
\lim_{\epsilon \searrow 0} J^3_\epsilon = \int_0^t \Scal{DF(s,X_s,X(s))\, , \, d X_s}\, .
\end{align*}
This corresponds to \eqref{EQN:ForInt}. Hence, we conclude the proof.
\end{proof}

In Appendix \ref{APP:PC} it is shown, exploiting the results obtained in \cite{f13in}, that the It\^{o} formula \eqref{EQN:Ito} is coherent with the It\^{o} formula for path-dependent processes with jumps proved in \cite{cont2010change}, as well as the results obtained in \cite{f13in}.

Let us consider the \textit{forward integral}
\[
\int_0^t \Scal{DF(s,X_s,X(s))\, , \, d X_s} \, ,
\]
introduced in Theorem \ref{THM:Ito}, we want now to relate the \textit{forward integral} to the operator introduced in \cite{Yan}. In fact, since the right-derivative operator introduced in Definition \ref{DEF:WeRi} is the infinitesimal generator of the left-shift semigroup introduced in \cite{Yan}, it can be shown that the \textit{forward integral} does coincide, under some suitable regularity conditions, with the operator $\mathcal{S}F(s,X_s,X(s))$ introduced in \cite{Yan}.

\begin{proposition}\label{PROP:Conv}
Let $X$ be the solution to equation \eqref{EQN:SFDDE1} and let $F:[0,T] \times L^p \times \R^d \to \R$ be such that $F \in C^{1,1,2}\left ([0,T] \times L^p \times \R^d \right )$ and such that the forward integral defined in Equation \eqref{EQN:ForInt} is well-defined as a limit in probability uniformly on compacts. Additionally, let us assume that $X_s \in W^{1,p}$ for every $s\in [0,T]$. Then
\begin{equation}\label{EQN:WeakG}
\int_0^t \Scal{DF(s,X_s,X(s))\, , \, dX_s} = \int_0^t \Scal{DF(s,X_s,X(s))\, , \, \nabla_\theta^+ X_s} ds  =\int_0^t \mathcal{S}F(s,X_s,X(s)) ds\,
\end{equation}
holds $P$-a.s., where $\mathcal{S}F(s,X_s,X(s))$ is the operator introduced in \cite[Section. 9]{Yan}.
\end{proposition}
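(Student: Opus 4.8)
The plan is to exploit the representation of the forward integral as a ucp limit of difference quotients and to identify, pathwise, the increment $\epsilon^{-1}(X_{s+\epsilon}-X_s)$ with the forward difference quotient of the $W^{1,p}$-path $\theta\mapsto X(s+\theta)$, whose $L^p$-limit is precisely $\nabla_\theta^+ X_s$; the three expressions in \eqref{EQN:WeakG} then coincide, the last one by the definition of $\mathcal{S}$. First I would record a structural consequence of the hypothesis: since $X_s\in W^{1,p}$ for every $s\in[0,T]$ and $W^{1,p}([-r,0])\hookrightarrow C([-r,0])$ continuously, each restriction of $X$ to $[s-r,s]$ is continuous, so $X$ is continuous on $[-r,T]$; moreover, as these restrictions are $W^{1,p}$ on overlapping intervals, $X\in W^{1,p}([-r,T];\R^d)$ $P$-a.s. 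In particular, for $\theta\in[-r,0]$ and $\epsilon>0$ small, $\epsilon^{-1}(X_{s+\epsilon}(\theta)-X_s(\theta))=\epsilon^{-1}(X(s+\theta+\epsilon)-X(s+\theta))$, and by the standard $L^p$-convergence of difference quotients of Sobolev functions, $\epsilon^{-1}(X_{s+\epsilon}-X_s)\to\nabla_\theta^+ X_s$ in $L^p([-r,0];\R^d)$ as $\epsilon\searrow0$, for every $s\in[0,T]$ and $P$-a.s., with the uniform bound $\|\epsilon^{-1}(X_{s+\epsilon}-X_s)\|_{L^p}\le\|X'\|_{L^p([-r,T])}$ (here $\nabla_\theta^+ X_s$ agrees, as an element of $L^p$, with the weak derivative of $X_s$).

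Next I would pass to the limit inside the duality pairing and the time integral. Since $F\in C^{1,1,2}$ and, $X$ being continuous, the map $s\mapsto(s,X_s,X(s))$ is continuous into $[0,T]\times L^p\times\R^d$, the map $s\mapsto DF(s,X_s,X(s))\in L^q$ is continuous and hence bounded on $[0,T]$; continuity of the pairing then gives $\Scal{DF(s,X_s,X(s)), \epsilon^{-1}(X_{s+\epsilon}-X_s)}\to\Scal{DF(s,X_s,X(s)), \nabla_\theta^+ X_s}$ for every $s$ and $P$-a.s., while by H\"older's inequality the left-hand side is dominated, uniformly in $\epsilon$, by $\big(\sup_{s\le T}\|DF(s,X_s,X(s))\|_{L^q}\big)\,\|X'\|_{L^p([-r,T])}<\infty$ $P$-a.s. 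Dominated convergence yields, for each $t$ and $P$-a.s.,
\[
\frac1\epsilon\int_0^t\Scal{DF(s,X_s,X(s)), X_{s+\epsilon}-X_s}\,ds\;\longrightarrow\;\int_0^t\Scal{DF(s,X_s,X(s)), \nabla_\theta^+ X_s}\,ds ,
\]
and since the bound on the integrand does not depend on $t\le T$, the convergence is uniform in $t$, hence ucp. As the forward integral $\int_0^\cdot\Scal{DF(s,X_s,X(s)), dX_s}$ is, by hypothesis, the ucp limit of exactly these difference quotients, uniqueness of ucp limits gives the first equality in \eqref{EQN:WeakG}.

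For the second equality I would invoke the definition of $\mathcal{S}$ in \cite[Section~9]{Yan}: it is the composition of the Fr\'{e}chet derivative $DF$ in the segment variable with the infinitesimal generator of the left-shift semigroup, which — as recalled before Proposition \ref{PROP:Conv} — acts on its domain (containing the $W^{1,p}$ segments) as the right weak derivative $\nabla_\theta^+$; hence $\mathcal{S}F(s,X_s,X(s))=\Scal{DF(s,X_s,X(s)), \nabla_\theta^+ X_s}$ whenever $X_s\in W^{1,p}$, and integrating over $[0,t]$ finishes the proof. The hard part will be making the passage from the ucp limit defining the forward integral to the pathwise $L^p$-limit of the difference quotients fully rigorous — i.e. producing an $s$-domination uniform in both $\epsilon$ and $t$ — together with the careful verification that the $W^{1,p}$ hypothesis really disposes of the boundary behaviour of the difference quotient near $\theta=0$ (it does, since it forces continuity of $X$), and that the operator $\mathcal{S}$ of \cite{Yan} is genuinely the pairing $\Scal{DF,\nabla_\theta^+ X_s}$.
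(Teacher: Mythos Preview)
Your proposal is correct and follows essentially the same route as the paper: both arguments show that, for each $s$, the difference quotient $\epsilon^{-1}(X_{s+\epsilon}-X_s)$ converges in $L^p$ to the weak derivative $\nabla_\theta^+ X_s$ (the paper writes this out via the fundamental theorem of calculus and Lebesgue differentiation, you invoke the standard Sobolev difference-quotient lemma), then pass the pairing through the limit by dominated convergence, with the Hardy--Littlewood maximal function (paper) respectively $\|X'\|_{L^p([-r,T])}$ (you) serving as the dominating bound. Your version is in fact slightly more complete, since you explicitly patch the segment hypotheses into $X\in W^{1,p}([-r,T])$ and argue the uniformity in $t$ needed for ucp convergence, whereas the paper's proof only records the pointwise-in-$s$ limit of the integrand.
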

\begin{proof}
Let $X_s \in W^{1,p}$. From the fundamental theorem of calculus for absolutely continuous functions we have, $\mathbb{P}$-a.s.,
\begin{align*}
&\lim_{\epsilon \searrow 0}  \Scal{DF(s,X_s,X(s)) \, ,\, \frac{X_{s+\epsilon}-X_s }{\epsilon}- \nabla^+_\theta X_s} \\
&\lim_{\epsilon \searrow 0} \int_{-r}^0DF(s,X_s,X(s))(\beta)\cdot  \left ( \frac{\left (X(s+\epsilon+\beta)-X(s+\beta) \right )}{\epsilon} -\nabla^+_\theta X(s+\beta) \right )d \beta\\
&=\lim_{\epsilon \searrow 0} \int_{-r}^0 DF(s,X_s,X(s))(\beta)\cdot \left (\frac{1}{\epsilon} \int_{s+\beta}^{s+\beta+\epsilon} \nabla_\theta X(r) dr - \nabla^+_\theta X(s+\beta)\right ) d\beta
\end{align*}
Now, by Lebesgue's dominated convergence theorem, which is justified by analogous arguments as for the convergence of \eqref{EQN:J2Etc2}, we finally get
\begin{align*}
\begin{split}
\lim_{\epsilon \searrow 0} \int_{-r}^0 DF(s,X_s,X(s))(\beta) \cdot \left (\frac{1}{\epsilon} \int_{s+\beta}^{s+\beta+\epsilon} \nabla_\theta X(r) dr - \nabla^+_\theta X(s+\beta)\right ) d\beta = 0.
\end{split}
\end{align*}
\end{proof}

Exploiting the standard definition of the Poisson random measure, we can now give another formulation of the It\^{o} formula \eqref{THM:Ito}.

\begin{theorem}[It\^{o}'s formula]\label{THM:Ito2}
Let the hypothesis of Theorem \ref{THM:Ito} hold, then
\begin{equation}\label{EQN:Ito2Jum}
\begin{split}
&F(t,X_t,X(t))=F(0,X_0,X(0)) + \int_0^t \Scal{DF(s,X_s,X(s))\, , \, d X_s} ds +\\
&+ \int_0^t \partial_t F(s,X_s,X(s)) ds + \int_0^t \nabla_x F(s,X_s,X(s))\cdot d X(s) ds+\\
& +\frac{1}{2} \int_0^t \int_0^t Tr \left [  g^*(s,X_s,X(s)) \nabla_x^2 F(s,X_s,X(s))g(s,X_s,X(s))\right ] d s +\\
&+ \sum_{s \leq t} F(s,X_s,X(s))-F(s,X_s,X(s-)) - \Delta X(s) \cdot \nabla_x F(s,X_s,X(s))\, ,
\end{split}
\end{equation}
holds $P$-a.s., where the notation is as in Theorem \ref{THM:Ito} and $\Delta X(s)$ is the jump of the process $X$ at time $s$, namely
\[
\Delta X(s) := X(s) -X(s-)\, .
\]
\end{theorem}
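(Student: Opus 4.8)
The plan is to deduce Theorem \ref{THM:Ito2} directly from the It\^{o} formula \eqref{EQN:Ito} of Theorem \ref{THM:Ito}, by rewriting the two integrals against the uncompensated Poisson random measure $N$ as a single, absolutely convergent sum over the jump times of the solution. Since the remaining terms are common to both formulae (after the obvious reordering), the whole task reduces to showing that, $P$-a.s., the sum of the last two integrals against $N$ in \eqref{EQN:Ito} equals
\[
\sum_{s\leq t}\Big(F(s,X_s,X(s))-F(s,X_s,X(s-))-\Delta X(s)\cdot\nabla_x F(s,X_s,X(s))\Big).
\]

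First I would recall that, by construction (see Section \ref{TheEquationsExistenceUniquenessMomentestimates2}), each $N^i$ is the jump measure of the $i$-th driving L\'evy process, which we denote $L^i$, so that for a predictable integrand $\psi$ one has the pathwise identity
\[
\int_0^t\int_{\R_0}\psi(s,z)\,N^i(ds,dz)=\sum_{0<s\leq t}\psi(s,\Delta L^i(s))\,\IndN{\{\Delta L^i(s)\neq 0\}},
\]
valid whenever the right-hand side is absolutely summable. Since, by Assumption $\mathcal{Q}$, the jump integral in \eqref{EQN:SFDDE1} is understood with respect to the predictable version of its integrand, at any jump time $s$ of $L^i$ the solution jumps by $\Delta X(s)=h^{\cdot,i}(s,X_{s-},X(s-))(\Delta L^i(s))$; here I use that $X_{s-}=X_s$ as elements of $L^p$, the two segments differing only at the single point $\theta=0$, so that the $L^p$-argument of $F$ and of $\nabla_x F$ is unambiguous at jump times. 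Substituting $z=\Delta L^i(s)$ into the integrands of the two integrals against $N$ in \eqref{EQN:Ito}, summing over $i=1,\dots,n$ and over $s\leq t$, and using $X(s)=X(s-)+\Delta X(s)$, the first collapses to $\sum_{s\leq t}\big(F(s,X_s,X(s))-F(s,X_s,X(s-))\big)$ and the second to $-\sum_{s\leq t}\nabla_x F(s,X_s,X(s-))\cdot\Delta X(s)$; adding the two gives precisely the summation term above, the gradient in the correction being evaluated at the pre-jump value $X(s-)$.

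The point deserving genuine care — and the one I would write out in full — is the convergence justifying these rearrangements. For an infinite-activity L\'evy noise neither of the two integrals against $N$ in \eqref{EQN:Ito} need be absolutely summable over jump times on its own, so they must be recombined \emph{before} passing to the pathwise sum. To handle this I would use that a strong solution $X$ of \eqref{EQN:SFDDE1} is a semimartingale (Theorem \ref{thm:existenceAndUniqueness_II}), hence $\sum_{s\leq t}|\Delta X(s)|^2<\infty$ $P$-a.s.; combined with $F(s,\cdot,\cdot)\in C^{1,1,2}$ and the fact that the segment path $s\mapsto (X_s,X(s))$ is \cadlag\ in $M^p$ (Proposition \ref{p: cadlag}), and therefore has relatively compact range on $[0,t]$, a second-order Taylor expansion of $F$ in its third argument (with the first argument frozen at $X_s$) bounds each summand by $C(\omega,t)\,|\Delta X(s)|^2$, where $C(\omega,t)$ dominates $\nabla_x^2 F$ over that relatively compact set. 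This makes the recombined sum absolutely convergent $P$-a.s. and legitimises the term-by-term manipulations, identifying the expression with the summation term of \eqref{EQN:Ito2Jum} and thereby completing the proof.
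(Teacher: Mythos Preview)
Your proposal is correct and follows the same route as the paper: deduce the formula from Theorem \ref{THM:Ito} by converting the two integrals against the uncompensated measure $N$ into a single pathwise sum over jump times. The paper's own proof is a one-line reference (``It immediately follows from Theorem \ref{THM:Ito} and \cite[Theorem 4.4.10]{MR2512800}''), deferring precisely the recombination-and-convergence argument you spell out to Applebaum's book; your Taylor-expansion bound via $\sum_{s\le t}|\Delta X(s)|^2<\infty$ and the relative compactness of the $M^p$-valued segment path is exactly the mechanism behind that cited result.

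One remark worth keeping: you correctly observe that after the substitution the gradient in the correction term is naturally evaluated at the pre-jump value $X(s-)$, whereas the displayed statement \eqref{EQN:Ito2Jum} (and already \eqref{EQN:Ito}) writes $\nabla_x F(s,X_s,X(s))$. This is a notational looseness in the paper rather than a flaw in your argument; since the integrand against $N$ must be predictable, the intended evaluation point is indeed $X(s-)$.
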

\begin{proof}
It immediately follows from Theorem \ref{THM:Ito} and \cite[Theorem 4.4.10]{MR2512800}.
\end{proof}

Hereinafter, we state a crucial probabilistic property of the solution of SFDDE \eqref{eq:Main1} which is needed for the derivation of Feynman-Kac's formula also stated below. As it is perceptible, the finite-dimensional process ${}^{(\eta,x)} X(t)$, $t\in [0,T]$, $(\eta,x) \in M^p$ is not Markov, since the value of ${}^{(\eta,x)} X(t)$ depends on the \emph{near} past. Nevertheless, if we enlarge the state space and regard the process $X$ as a process of the segment, i.e. with infinite-dimensional state space, in the present case $M^p$, then such process is indeed Markovian.

The proof follows almost immediately given the fact that the Markov property of the solution is fully known for the case without jumps, i.e. $h=0$, see \cite[Theorem (III. 1.1)]{SMohammed2}, which actually follows from measure theoretical properties of the driving noise and not path or distributional properties of it. More concretely, one would expect the Markov property of the solution to hold if, for instance, the driving noises have independent increments which is the case in our setting.

In order to state the Markov property one is compelled to look at solutions starting at time $t_1\geq 0$, that is we hereby consider ${}^{(t_1,\eta,x)} X_t$, $t\geq t_1$, $(\eta,x) \in M^p$, the segment of the solution starting in $(\eta,x)$ at times $t_1\geq 0$, i.e.
\[
\begin{split}
{}^{(t_1,\eta,x)} X (t) &= \eta ( 0) + \int_{t_1}^t f(s,{}^{(t_1,\eta,x)} X_s,{}^{(t_1,\eta,x)} X (s))ds + \int_{t_1}^t g(s,{}^{(t_1,\eta,x)} X_s, {}^{(t_1,\eta,x)} X (s)) dW(s) \\
&+ \int_{t_1}^t \int_{\R_0} h(s,^{(t_1,\eta,x)} X_s,{}^{(t_1,\eta,x)} X (s),z)\widetilde{N}(ds,dz)\, ,\\
\end{split}
\]
for every $t\in [t_1,T]$ and ${}^{(t_1,\eta,x)} X (t) = \eta (t-t_1)$ for every $t\in [t_1-t,t_1)$. Define further the family of operators
\begin{align*}
T_{t}^{t_1}: L^2(\Omega,\mathcal{F}_{t_1}; M^p) &\longrightarrow L^2(\Omega, \mathcal{F}_{t}; M^p)\\
(\eta,x) &\longmapsto \left ( {}^{(t_1,\eta,x)} X_t,{}^{(t_1,\eta,x)} X (t) \right ) \, .
\end{align*}
We denote $T_t = T_t^0$. It turns out that, under hypotheses $(\mathbf{L_1})$ and $(\mathbf{L_2})$, $T_t^{t_1}$ is Lipschitz continuous and the family of operators $\{T_t^{t_1}\}_{0\leq t_1\leq t \leq T}$ defines a semigroup on $L^2(\Omega, M^p)$, i.e.
$$T_{t_2} (\eta,x) = T_{t_2}^{t_1} \circ T_{t_1} (\eta,x),$$
for every $0\leq t_1\leq t_2\leq T$ and $(\eta,x) \in L^2(\Omega,\mathcal{F}_{0}; M^p)$. The latter property can easily be obtained by showing that both sides of the identity solve the same SFDDE and the fact that solutions are unique, see \cite[Theorem (II. 2.2)]{SMohammed2} for the case $h=0$.

\begin{theorem}[The Markov property]\label{THM:MP}
Assume hypotheses $(\mathbf{L_1})$ and $(\mathbf{L_2})$ hold and ${}^{(\eta,x)}X(t)$, $t\in [0,T]$, $(\eta,x) \in M^p$ denotes the unique strong solution of the SFDDE \eqref{eq:Main1}. Then the random field
$$\left \{\left ( {}^{(\eta,x)} X_t,{}^{(\eta,x)} X (t) \right ) : \, t\in [0,T], (\eta,x) \in M^p \right \}$$
describes a Markov process on $M^p$ with transition probabilities given by
$$p(t_1,(\eta,x),t_2, B) = \mathbb{P}(\omega \in \Omega, \, T_{t_2}^{t_1}(\eta,x)(\omega) \in B),$$
for $0\leq t_1\leq t_2\leq T$, $(\eta,x) \in M^p$ and Borel set $B\in \mathcal{B}(M^p)$. In other words, for any $(\eta,x) \in L^2(\Omega, \mathcal{F}_0; M^p)$ and Borel set $B\in \mathcal{B}(M^p)$, the Markov property
$$\mathbb{P}(T_{t_2}(\eta,x)\in B |\mathcal{F}_{t_1}) = p(t_1, T_{t_1}(\eta,x),t_2,B) = \mathbb{P}(T_{t_2}(\eta,x) \in B|T_{t_1}(\eta,x))$$
holds a.s. on $\Omega$.
\end{theorem}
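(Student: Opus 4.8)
The plan is to adapt the classical argument of Mohammed for the Brownian case (see \cite[Theorems (II.2.2) and (III.1.1)]{SMohammed2}), reducing the Markov property to two ingredients: the \emph{cocycle} identity $T_{t_2}(\eta,x)=T_{t_2}^{t_1}\circ T_{t_1}(\eta,x)$ already recorded above, and the fact that the solution started at time $t_1$ is a deterministic, jointly measurable functional of its initial datum and of the driving noise \emph{after} time $t_1$, the latter being independent of $\mathcal{F}_{t_1}$. Since the argument only uses independence of the increments of $W$ and the independent scattering of $N$ --- not any path or distributional property specific to Brownian motion --- the passage from the continuous setting to the present jump setting is essentially automatic.

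First I would make the measurable dependence precise. For $0\le t_1\le t_2\le T$, let
\[
\mathcal{G}_{t_1}^{t_2} := \sigma\big( W(s)-W(t_1),\ \widetilde N((u,s]\times A)\ :\ t_1\le u\le s\le t_2,\ A\in\mathcal{B}(\mathbb R_0) \big);
\]
by independence of the increments of $W$ and the independent scattering of $N$, the $\sigma$-algebra $\mathcal{G}_{t_1}^{t_2}$ is independent of $\mathcal{F}_{t_1}$. Running the Picard scheme of the proof of Theorem \ref{thm:existenceAndUniqueness_II}, but initialised at time $t_1$ with datum $(\eta,x)\in M^p$, one checks inductively that each iterate $(\eta,x)\mapsto {}^{(t_1,\eta,x)}X^{(k)}$ is a jointly measurable map from $M^p\times\Omega$ into $L^p_{T}$ which, for each fixed $(\eta,x)$, is $\mathcal{G}_{t_1}^{T}$-measurable in $\omega$; by the Lipschitz and linear-growth estimates the iterates converge in $L^p_{ad}(\Omega;L^p_T)$ uniformly over $(\eta,x)$ in bounded subsets of $M^p$, hence $P$-a.s.\ along a subsequence, and both properties pass to the limit. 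Combining this with the c\`adl\`ag modification of the segment process from Theorem \ref{thm:existenceAndUniqueness_II}(i) and Proposition \ref{p: cadlag}, we obtain, for each $t_2\in[t_1,T]$, a measurable map $\Phi_{t_1,t_2}:M^p\times\Omega\to M^p$ with $\Phi_{t_1,t_2}(y,\cdot)$ independent of $\mathcal{F}_{t_1}$ for every $y\in M^p$, such that $T_{t_2}^{t_1}(\eta,x)=\Phi_{t_1,t_2}\big((\eta,x),\cdot\big)$ $P$-a.s.

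Next I would invoke the standard ``freezing'' lemma: if $\zeta$ is an $\mathcal{F}_{t_1}$-measurable $M^p$-valued random variable and $\Phi:M^p\times\Omega\to M^p$ is measurable with $\Phi(y,\cdot)$ independent of $\mathcal{F}_{t_1}$ for every $y$, then for every bounded measurable $\psi:M^p\to\mathbb R$ one has $\mathbb{E}[\psi(\Phi(\zeta,\cdot))\mid\mathcal{F}_{t_1}]=g(\zeta)$ a.s., where $g(y):=\mathbb{E}[\psi(\Phi(y,\cdot))]$. Applying this with $\zeta=T_{t_1}(\eta,x)$ --- which is $\mathcal{F}_{t_1}$-measurable, in particular when $(\eta,x)$ is deterministic --- with $\Phi=\Phi_{t_1,t_2}$ and $\psi=\mathbbm{1}_B$, and using the cocycle identity, we obtain
\[
\mathbb{P}\big(T_{t_2}(\eta,x)\in B\mid\mathcal{F}_{t_1}\big) = p\big(t_1,T_{t_1}(\eta,x),t_2,B\big),\qquad p(t_1,y,t_2,B):=P\big(\Phi_{t_1,t_2}(y,\cdot)\in B\big);
\]
since the right-hand side is $\sigma(T_{t_1}(\eta,x))$-measurable, the tower property also yields $\mathbb{P}(T_{t_2}(\eta,x)\in B\mid\mathcal{F}_{t_1})=\mathbb{P}(T_{t_2}(\eta,x)\in B\mid T_{t_1}(\eta,x))$. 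For a general initial datum $(\eta,x)\in L^2(\Omega,\mathcal{F}_0;M^p)$ the same computation applies, because $\mathcal{F}_0\subseteq\mathcal{F}_{t_1}$ keeps $\mathcal{G}_{t_1}^{t_2}$ independent of $\mathcal{F}_{t_1}$. It remains to verify that $p$ is a bona fide transition kernel: measurability of $p(t_1,\cdot,t_2,B)$ follows from the joint measurability of $\Phi_{t_1,t_2}$ by a monotone-class argument, and the Chapman--Kolmogorov relation from the cocycle property together with the independence of $\mathcal{G}_{t_1}^{t_1'}$ and $\mathcal{G}_{t_1'}^{t_2}$ for $t_1\le t_1'\le t_2$.

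I expect the main obstacle to be this first measurability step: exhibiting a single $P$-null set off which ${}^{(t_1,\eta,x)}X$ is simultaneously defined, for all $(\eta,x)\in M^p$, as a measurable function of the shifted noise --- i.e.\ controlling joint measurability together with the independence structure in the infinite-dimensional, c\`adl\`ag setting. The Lipschitz and linear-growth bounds underlying Theorem \ref{thm:existenceAndUniqueness_II} are precisely what force the Picard iterates to converge uniformly in the initial datum, which is what lets both the joint measurability and the $\mathcal{G}_{t_1}^{t_2}$-measurability survive passage to the limit; the remaining points (measurability and continuity of $p$, the reduction from random to deterministic data) are routine and parallel \cite[Theorems (II.2.2) and (III.1.1)]{SMohammed2}.
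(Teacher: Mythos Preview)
Your proposal is correct and follows essentially the same strategy as the paper --- Mohammed's argument, hinging on the cocycle identity for $\{T_t^{t_1}\}$ and the independence of the post-$t_1$ noise from $\mathcal{F}_{t_1}$.

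There is a minor difference in the technical route. The paper reduces the Markov identity to proving
\[
\int_A f\big(T_{t_2}(\eta,x)\big)\,d\mathbb{P} \;=\; \int_A\!\int_\Omega f\Big(T_{t_2}^{t_1}\big(T_{t_1}(\eta,x)(\omega')\big)(\omega)\Big)\,\mathbb{P}(d\omega)\,\mathbb{P}(d\omega')
\]
first for bounded \emph{continuous} $f$, and then invokes the separability of $M^p$ to approximate $\mathbbm{1}_B$ for open $B$ by uniformly continuous partitions of unity, finally extending to all Borel $B$ by a monotone-class argument. You instead build a jointly measurable solution map $\Phi_{t_1,t_2}:M^p\times\Omega\to M^p$ via the Picard scheme and apply the freezing lemma directly with $\psi=\mathbbm{1}_B$. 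Your version front-loads the work into the measurability of $\Phi$ (which you correctly flag as the delicate point) and avoids the continuous-to-indicator approximation; the paper's version keeps the measurability discussion lighter but relies explicitly on separability of the state space. Both are standard renderings of the same argument and arrive at the same conclusion with comparable effort.
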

\begin{proof}
we can see that for every $0\leq t_1\leq t_2\leq T$ and every $(\eta,x) \in M^p$, the mapping $B\mapsto p(t_1,(\eta,x),t_2,B)= \mathbb{P} \circ (T_{t_2}^{t_1}(\eta,x))^{-1}(B)$, $B\in \mathcal{B}(M^p)$ defines a probability measure on $M^p$, since the random variable $T_{t_2}^{t_1}(\eta,x):\Omega \rightarrow M^p$ is $(\mathcal{F},\mathcal{B}(M^p))$-measurable. We would then like to show that, if $0\leq t_1\leq t_2\leq T$, then
\begin{align}\label{Markov1}
\mathbb{P}\Big(\omega\in \Omega: \, T_{t_2}(\eta,x)(\omega)\in B|\mathcal{F}_{t_1}\Big)(\omega') = p(t_1,T_{t_1}(\eta,x)(\omega'),t_2,B)
\end{align}
for almost all $\omega'\in \Omega$, every Borel set $B\in \mathcal{B}(M^p)$ and any $(\eta,x) \in L^2(\Omega,\mathcal{F}_0;M^p)$. The right-hand side of \eqref{Markov1} equals
$$\int_{\Omega} 1_{B}\bigg( \left( T_{t_2}^{t_1}(T_{t_1}(\eta)(\omega')) \right)(\omega) \bigg) \mathbb{P}(d\omega)$$
for almost all $\omega'\in \Omega$. Hence, by the definition of conditional expectation, identity \eqref{Markov1} is synonymous with
\begin{align}\label{Markov2}
\int_A 1_{B} \left( T_{t_2}(\eta,x)(\omega)\right) \mathbb{P}(d\omega) = \int_A \int_{\Omega} 1_{B}\left( \left( T_{t_2}^{t_1}(T_{t_1}(\eta,x)(\omega')) \right)(\omega) \right) \mathbb{P}(d\omega) \mathbb{P}(d\omega')
\end{align}
for all $A\in \mathcal{F}_{t_1}$ and all $B\in \mathcal{B}(M^p)$. In a summary, the main challenge is to verify relation \eqref{Markov2} which is stated in quite general terms.

Let $\mathcal{G}_t$, $t\in [0,T]$ be the $\sigma$-algebra generated by $\{N((s,u],B), t< s\leq u\leq T, B\in \mathcal{B}(\R_0)\}$. The key steps in proving \eqref{Markov2} according to \cite[Theorem (III. 1.1)]{SMohammed2} are the following. First, the family of operators $\{T_t\}_{t\in [0,T]}$ defines a semigroup on $L^2(\Omega, M^p)$. Secondly, the $\sigma$-algebras $\mathcal{F}_{t}$ and $\mathcal{G}_{t}$ are independent for every $t\in[0,T]$, and $\{T_t^{t_1}(\eta,x)\}_{t\geq t_1}$ is adapted to $\{\mathcal{F}_t\cap \mathcal{G}_{t_1}\}_{t\geq t_1}$, being each $\mathcal{F}_t\cap \mathcal{G}_{t_1}$ independent of $\mathcal{F}_{t_1}$. Finally, a key point to prove \eqref{Markov2} is that one can first prove
\begin{align}\label{Markov3}
\int_A f \left( T_{t_2}(\eta,x)(\omega)\right) \mathbb{P}(d\omega) = \int_A \int_{\Omega} f\left( \left( T_{t_2}^{t_1}(T_{t_1}(\eta,x)(\omega')) \right)(\omega) \right)  \mathbb{P}(d\omega)  \mathbb{P}(d\omega')
\end{align}
for any bounded continuous function $f:M^p\rightarrow \R$. Then one can use a limit argument to show the relation \eqref{Markov2} for the case $f=1_{B}$ being $B$ just an open set of $M^p$ and eventually for any general indicator function on Borel sets. The argument which transfers \eqref{Markov3} into the case $f=1_B$ for any open set $B$ in $M^p$ requires that the state space in consideration is separable so that $1_{B}$, being $B$ an open set of $M^p$, can be approximated by uniformly continuous partitions of unity $\{f_m\}_{m\in \mathbb{N}}$, $f_m:M^p\rightarrow \R$ such that $\displaystyle\lim_{m\to \infty} f_m = 1_B$. All these properties above mentioned are indeed satisfied in our framework.
\end{proof}

Exploiting It\^{o}'s formula from Theorem \ref{THM:Ito} together with the Markov property from Theorem \ref{THM:MP}, we can now prove a Feynman-Kac theorem for $M^p-$valued SFDDE's with jumps.

\begin{theorem}[Feynman-Kac theorem]\label{THM:FK}
Let the hypothesis of Theorem \ref{THM:Ito} hold, then the following \textit{path-dependent partial integro-differential equation} (PPIDE) holds
\begin{equation}\label{EQN:FK}
\begin{cases}
\partial_t F(t,\eta,x) &= \Scal{DF(s,\eta,x)\, , \, d \eta} + \nabla_x F(t,\eta,x)  \cdot f(t,\eta) \\
&+ \frac{1}{2} Tr \left [g(t,\eta,x) g^*(t,\eta,x) \nabla^2_x F(t,\eta,x)\right ]\\
& + \int_{\R_0}\left ( F(t,\eta,x +h(t,\eta,x)(z)) - F(t,\eta,x) -\nabla_x F(t,\eta,x) h(t,\eta,x)(z)\right ) \nu(dz)\, ,\\
F(T,\eta,x) &= \Phi(\eta,x)\, ,
\end{cases}
\end{equation}
with $\Phi \in C^{1,2 }( L^p \times \R^d)$, then we have
\begin{equation}\label{EQN:Fe}
F(t,\eta,x) := \mathbb{E}\left [\left . \Phi(X_T,X(T)) \right | X_t = \eta, \, X(t) = x\right ]\, ,\quad t \in [0,T]\, ,
\end{equation}
where $(X_t,X(t))$ solves the SFDDE \eqref{EQN:SFDDE1}. If further $\Phi\in C^{1,2} (L^p \times \R^d)$, then the converse holds true, as well.
\end{theorem}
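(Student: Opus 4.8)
The plan is to run the classical Feynman--Kac scheme, with the path-dependent It\^o formula of Theorem~\ref{THM:Ito2} playing the role of the usual It\^o formula, Proposition~\ref{PROP:Conv} used to identify the forward-integral term as an absolutely continuous (``$ds$'') term $\mathcal{S}F$, the moment estimate of Theorem~\ref{thm:existenceAndUniqueness_II} used to upgrade local martingales to true martingales, and the Markov property of Theorem~\ref{THM:MP} used to turn conditioning on $\mathcal F_t$ into conditioning on $(X_t,X(t))$.

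\emph{From the PPIDE to the representation.} Assume $F\in C^{1,1,2}([0,T]\times L^p\times\R^d)$ satisfies \eqref{EQN:FK} with $F(T,\cdot,\cdot)=\Phi$ and the regularity hypotheses of Theorem~\ref{THM:Ito}. Fix $(t,\eta,x)$ with $\eta\in W^{1,p}$ and let ${}^{(t,\eta,x)}X$ denote the strong solution started at time $t$; by Proposition~\ref{PROP:Conv} the forward integral appearing in It\^o's formula equals $\int_t^T\mathcal{S}F(s,X_s,X(s))\,ds$. Apply Theorem~\ref{THM:Ito2} to $s\mapsto F(s,X_s,X(s))$ on $[t,T]$ and rewrite the Poisson integrals via the compensated measure $\widetilde N$ and its compensator $\nu(dz)\,ds$: the absolutely continuous part of the formula then collects $\partial_s F$ together with the spatial operator on the right-hand side of \eqref{EQN:FK}, so it vanishes along the solution by the PPIDE. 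One is left with $\Phi(X_T,X(T)) = F(t,\eta,x) + M_T - M_t$, where $M$ gathers the $dW$- and $\widetilde N$-stochastic integrals. By $(\mathbf{L_2})$ and the moment bound of Theorem~\ref{thm:existenceAndUniqueness_II}, the integrands of $M$ lie in the relevant $L^2$-spaces on $[t,T]$, so $M$ is a genuine martingale; taking $\mathbb E[\,\cdot\mid\mathcal F_t]$ and using Theorem~\ref{THM:MP} to identify this with $\mathbb E[\,\cdot\mid X_t=\eta,\,X(t)=x]$ gives \eqref{EQN:Fe} for $\eta\in W^{1,p}$. Finally, since $W^{1,p}$ is dense in $L^p$ and the flow $\eta\mapsto{}^{(t,\eta,x)}X$ is $L^p$-continuous (by $(\mathbf{L_1})$ and Gr\"onwall, as in Theorem~\ref{thm:existenceAndUniqueness_II}), continuity of $F$ and of $\Phi$ extends \eqref{EQN:Fe} to all $\eta\in L^p$.

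\emph{From the representation to the PPIDE.} Define $F$ by \eqref{EQN:Fe} with $\Phi\in C^{1,2}(L^p\times\R^d)$. The semigroup property of $\{T_t^{t_1}\}$ recalled before Theorem~\ref{THM:MP}, combined with the tower property of conditional expectation, gives $F(t,\eta,x)=\mathbb E\big[F(t+\delta,X_{t+\delta},X(t+\delta))\mid X_t=\eta,\,X(t)=x\big]$ for $0\leq\delta\leq T-t$; equivalently $s\mapsto F(s,X_s,X(s))$ is a martingale along every solution. Inserting this into It\^o's formula (Theorem~\ref{THM:Ito2}), the martingale property forces the absolutely continuous part to be $0$ for a.e.\ $s$, $P$-a.s.; evaluating along the solution started at $(t,\eta,x)$, dividing by the elapsed time, letting it tend to $0$, and using the continuity in $(t,\eta,x)$ of the terms involved localizes the identity at $s=t$ and yields \eqref{EQN:FK} pointwise. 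The terminal condition $F(T,\eta,x)=\Phi(\eta,x)$ is immediate from \eqref{EQN:Fe}.

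\emph{Main obstacle.} Even granting the $C^{1,1,2}$-regularity and the It\^o hypotheses stipulated in Theorem~\ref{THM:Ito}, the substantive difficulty for the converse direction is to show that the function defined by \eqref{EQN:Fe} genuinely has this regularity --- in particular that $DF(t,\cdot,x)\in W^{1,q}$ with $\nabla_\theta DF(t,\cdot,x)$ uniformly continuous --- which is the infinite-dimensional counterpart of smoothness of the Kolmogorov semigroup and hinges on Fr\'echet differentiability of the flow $(t,\eta,x)\mapsto{}^{(t,\eta,x)}X$ in all its arguments; isolating clean sufficient conditions on $f,g,h,\Phi$ for this is the delicate part. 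Subsidiary, more routine, points are the rigorous martingale verification for $M$ via the moment estimates and the density argument carrying \eqref{EQN:Fe} from $W^{1,p}$-segments (where Proposition~\ref{PROP:Conv} applies) to general $\eta\in L^p$.
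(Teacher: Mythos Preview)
Your overall strategy coincides with the paper's: apply the It\^o formula of Theorem~\ref{THM:Ito} along the solution, use the PPIDE to annihilate the absolutely continuous part, and take conditional expectation to remove the remaining $dW$- and $\tilde N$-integrals; for the converse, use the Markov property (Theorem~\ref{THM:MP}) and the tower rule to see that $s\mapsto F(s,X_s,X(s))$ is a martingale, then read the PPIDE off from the vanishing drift in It\^o's formula. The paper's proof is exactly this, only more tersely written and without the auxiliary martingale verification you include.

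Where you depart from the paper is the detour through Proposition~\ref{PROP:Conv} to rewrite the forward integral as $\int_t^T\mathcal{S}F(s,X_s,X(s))\,ds$. That proposition requires $X_s\in W^{1,p}$ for every $s$, and this is \emph{not} implied by taking the initial segment $\eta\in W^{1,p}$: the segment of a jump semimartingale is merely c\`adl\`ag, so in general $X_s\notin W^{1,p}$ once a jump has occurred on $[s-r,s]$. The paper sidesteps this entirely by never converting the forward integral: since the PPIDE~\eqref{EQN:FK} is written with the same symbol $\Scal{DF(s,\eta,x),d\eta}$ that appears in the It\^o formula, the cancellation is performed directly at that level, and the subsequent density argument from $W^{1,p}$ to $L^p$ is not needed. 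Your remarks in the ``Main obstacle'' paragraph about the regularity of the candidate $F$ in the converse direction and about genuine (rather than local) martingality are legitimate concerns; the paper simply leaves them implicit.
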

\begin{proof}
We have to show that if a function $F:[0,T] \times L^p \times \R^d \to \R$ satisfies the PIDE \eqref{EQN:FK}, then we have that equation \eqref{EQN:Fe} holds. Let us assume $X$ is the unique solution to equation \eqref{EQN:SFDDE1}, as in Section \ref{TheEquationsExistenceUniquenessMomentestimates} we will use the notation $^{(\tau,\eta,x)} X$ to denote the process with initial time $\tau \in [0,T]$ and initial value $(\eta,x) \in M^p$. If $F$ satisfies equation \eqref{EQN:FK} by It\^{o}'s formula \eqref{EQN:Ito} we have
\begin{equation}\label{EQN:Rhs}
\begin{split}
&F(T,^{(\tau,\eta,x)}X_T,^{(\tau,\eta,x)}X(T)) -F(\tau,\eta,x) \\
&=\int_\tau^T \nabla_x F(s,^{(\tau,\eta,x)}X_s,^{(\tau,\eta,x)}X(s)) \cdot g(s,^{(\tau,\eta,x)}X_s,^{(\tau,\eta,x)}X(s)) dW(s) \\
&+\int_\tau^T \int_{\R_0} \left (F(s,^{(\tau,\eta,x)}X_s,^{(\tau,\eta,x)}X(s) + h(s,^{(\tau,\eta,x)}X_s,^{(\tau,\eta,x)}X(s))(z)\tilde{N}(ds,dz)\right .\\
 &-\int_\tau^T \int_{\R_0} \left . F(s,^{(\tau,\eta,x)}X_s,^{(\tau,\eta,x)}X(s))\right ) \tilde{N}(ds,dz)\, .
\end{split}
\end{equation}
Taking now the expectation, exploiting the fact that the right-hand side of equation \eqref{EQN:Rhs} has null conditional expectation and using the terminal condition we have for any $0 \leq \tau < t \leq T$,
\[
F(t,\eta,x) = \mathbb{E}\left [\left . \Phi(X_T,X(T)) \right | X_t = \eta, \, X(t) = x\right ]\, ,
\]
and the first implication is proved.

Conversely, let us now suppose equation \eqref{EQN:Fe} holds true, then from the Markov property from Theorem \ref{THM:MP} of the $M^p$-valued process and the tower rule for the conditional expectation, we have that for $0 \leq \tau < t \leq T$,
\[
\begin{split}
&\mathbb{E} \left [\left .F(t,X_t,X(t))-F(\tau,X_\tau,X(\tau))\right | X_\tau = \eta ,X(\tau) = x\right ]= \\
&=\mathbb{E} \left [\left .\mathbb{E}\left [ \left .\Phi(X_T,X(T))\right |X_t,X(t)\right ]-\mathbb{E}\left [ \left .\Phi(X_T,X(T))\right |X_\tau,X(\tau)\right ]\right | X_\tau,X(\tau)\right ] =\\
&=\mathbb{E} \left [\left .\Phi(X_T,X(T))\right | X_\tau,X(\tau)\right ]-\mathbb{E} \left [\left .\Phi(X_T,X(T))\right | X_\tau,X(\tau)\right ]=0\, .
\end{split}
\]
On the other side, we can apply It\^{o}'s formula (Theorem \ref{THM:Ito}) to the function $F$, then we have that for $0 \leq \tau < t \leq T$,
\[
\begin{split}
&F(t,X_t,X(t))=F(\tau,X_\tau,X(\tau)) + \int_\tau^t \Scal{DF(s,X_s,X(s))\, , \, d X_s}  + \int_\tau^t \partial_t F(s,X_s,X(s)) ds \\
&+ \int_\tau^t \nabla_x F(s,X_s,X(s))\cdot  d X(s) +\frac{1}{2} \int_\tau^t Tr \left [g^*(s,X_s,X(s)) \nabla_x^2 F(s,X_s,X(s))g(s,X_s,X(s))\right ] d s \\
&+ \int_\tau^t \int_{\R_0} \left (F(s,X_s,X(s)+h(s,X_s,X(s))(z))-F(s,X_s,X(s)) \right ) \nu(dz)\,ds\\
&- \int_\tau^t \int_{\R_0}   \nabla_x F(s,X_s,X(s))h(s,X_s,X(s))(z) \nu(dz)\,ds \\
 &+ \int_\tau^t \int_{\R_0} \left (F(s,X_s,X(s)+h(s,X_s,X(s))(z)) - F(s,X_s,X(s))\right ) \tilde{N}(ds,\,dz)\, .
\end{split}
\]
Then taking conditional expectation the PPIDE \eqref{EQN:FK} holds true.
\end{proof}

\appendix

\section{Appendix}
\subsection{Kunita's inequality}\label{sec:Kunita}
In Section \ref{sec:functionalsOnS},
 we introduced a general version of \emph{Kunita's inequality}, (Corollary 2.12 in \cite{MR2090755}). For $n=1$, this is a rewritten version of Corrolary 2.12 in \cite{MR2090755}). Below, we explain how to extend the result to general $n$.
\begin{proof}[Proof of Lemma \ref{lem:Kunita}]
 Notice that since norms on $\mathbb R^n$ are equivalent, it holds that
\begin{align*}\sum_{j=1}^n &|a_j|^{q}\leq C_0\Big(\sum_{j=1}^n|a_j|\Big)^q,\textnormal{ and}\\
\big(\sum_{j=1}^n&|a_j|\big)^{q}\leq C_1\sum_{j=1}^n |a_j|^q,
\end{align*}
for some constants $C_0, C_1$ depending only on $n$ and $q$. We may assume that $C_0>1$
\begin{align*}
\sum_{j=1}^n &\| H^{,j}(s)\|^q_{L^2(\nu_j, \mathbb R^d)}=\sum_{j=1}^n \big(\int_{\mathbb R_0}|H^{,j}(s,z)|^2\nu_j(dz)\Big)^{\frac{q}{2}}\\
& \leq C_0\big(\sum_{j=1}^n \int_{\mathbb R_0}|H^{,j}(s,z)|^2\nu_j(dz)^{\frac{1}{2}}\Big)^q=C_0 \| H(s)\|^q_{L^2(\nu, \mathbb R^d)}.
\end{align*}
Then, if we write out "the columns wise" form of the $\tilde N$-integral, we obtain 
\begin{align*}
\sup_{0\leq u\leq t}&\Big|\sum_{j=1}^n \int_0^u \int_{\mathbb R_0}H^{,j}(s,z)\tilde N(ds,dz)\Big|^q \leq n^{q-1}\sup_{0\leq u\leq t}   \sum_{j=1}^n\Big| \int_0^u \int_{\mathbb R_0}H^{,j}(s,z)\tilde N(ds,dz)\Big|^q       \\
&\leq n^{q-1}\sum_{j=1}^n \int_0^t \| H^{,j}(s)\|_{L^q(\Omega, L^q(\nu_j,\mathbb R^{d}))} +  \| H^{,j}(s)\|_{L^q(\Omega, L^2(\nu_j,\mathbb R^{d}))} \,ds\\
&=n^{q-1}\int_0^t E\Big[\sum_{j=1}^n \| H^{,j}(s)\|^q_{L^q(\nu_j, \mathbb R^d)}+\sum_{j=1}^n \| H^{,j}(s)\|^q_{L^2(\nu_j, \mathbb R^d)}\Big]ds\\
&\leq n^{q-1}\int_0^t E\Big[\| H(s)\|^q_{ L^q(\nu, \mathbb R^{d\times n})}+ C_0 \| H(s)\|^q_{L^2(\nu, \mathbb R^{d\times n})}\Big]ds\\
&\leq n^{q-1}C_0 \int_0^t  \| H(s)\|^q_{L^q(\Omega, L^q(\nu, \mathbb R^{d\times n}) )}+ \| H(s)\|^q_{L^q(\Omega, L^2(\nu, \mathbb R^{d\times n}) )} ds.
\end{align*}
\end{proof}

\subsection{Connection with path-dependent calculus}\label{APP:PC}

In what follows we provide a connection between It\^{o}'s formula \eqref{THM:Ito} and the path-dependent It\^{o}'s formula given in \cite{cont2010change,cont2013functional} which relies on the concepts of vertical and horizontal derivative, there introduced. Let us first set the notation we  use in the current section.

Let $(\Omega, \mathcal{F},P)$ be the probability space with $\Omega=\mathcal{D}([0,T],\R^d)$ endowed with the $P$-augmented (right-continuous) filtration $\{\mathcal{F}_t\}_{t\in [0,T]}$ generated by the canonical process $Y:[0,T]\times\Omega \rightarrow \R^d$, $Y(t,\omega) = \omega(t)$ and here $\mathcal{F}:=\mathcal{F}_T$. In this setting we define, for every $\omega\in \Omega$ and $t\in [0,T]$, $\omega_t :=\{\omega(s), \, 0\leq s\leq t\}\in \mathcal{D}([0,t])$, the trajectory up to time $t$. A stochastic process is a function $\varphi:[0,T]\times \Omega\rightarrow \R^d$, $(t,\omega)\mapsto \varphi(t,\omega)$. In addition, we say $\varphi$ is non-anticipative if it is defined on $\mathcal{D}([0,t];\R^d)$, i.e. $\varphi(t,\omega)=\varphi(t,\omega_t):=\varphi_t(\omega_t)$.

Let $\varphi=\{\varphi_t, t\in [0,T]\}$ be a non-anticipative stochastic process and $\{e_i\}_{i=1}^d\subset \R^d$ the canonical basis, we define the so-called \emph{vertical derivative} as the following (path-wise) limit

$$\mathcal{D}^{V,i} \varphi_t (\omega_t) = \lim_{h\to 0}\frac{\varphi_t(\omega_t^{h e_i})-\varphi_t(\omega_t)}{h},$$
where $\omega_t^{he_i}(s):= \omega_t(s)+he_i 1_{\{t\}}(s)$, for every $s\in [0,t]$. Here, $\omega_t^{he_i}$ means adding a jump of size $h$ at time $t$ on the direction of $e_i$ and hence the name. We then define the \emph{vertical gradient} of $\varphi_t$ as
$$\mathcal{D}^V \varphi_t = \left( \mathcal{D}^{V,1}\varphi_t, \dots, \mathcal{D}^{V,d}\varphi_t \right).$$
Furthermore, we define the \emph{horizontal derivative} as the following (path-wise) limit
$$\mathcal{D}^{H} \varphi_t (\omega_t) = \lim_{h\searrow 0} \frac{\varphi_{t+h}(\omega_{t,h})-\varphi_t(\omega_t)}{h},$$
where $\omega_{t,h}(s):= \omega_t(s)1_{[0,t]}(s)+\omega_t(t)1_{(t,t+h]}(s)$, for every $s\in [0,t+h]$. Here, $\omega_{t,h}$ is the extension of the trajectory $\omega_t$ on $[0,t]$ to $[0,t+h]$ by an horizontal line of length $h$ at $\omega_t(t)$ and hence the name.

We consider a functional $F:[0,T]\times \mathcal{D}([0,T];\R^d)\rightarrow \R$ which will act on processes $\varphi_t$. We say $F$ is non-anticipative if
$$F(t,\psi)=F(t,\psi_t)=: F_t(\psi_t),$$
for every non-anticipative stochastic process $\psi_t$. Next, we state an It\^{o} formula for $F_t(\psi_t)$ where $F_t$ is a non-anticipative functional which is once horizontally and twice vertically differentiable. This result is taken from \cite[Proposition 6]{cont2010change}.

\begin{theorem}[Functional It\^{o}'s formula]\label{functional Ito formula}
Consider an $\R^d$-valued non-anticipative stochastic process $\varphi_t$ which admits the following c\`{a}dl\`{a}g semimartingale representation
$$\varphi_t = \varphi_0 + \int_0^t \mu(s) ds + \int_0^t \sigma(s)dW(s)+\int_0^t \int_{\R_0} \gamma(s-,z)\widetilde{N}(ds,dz)$$
for processes $\mu:[0,T]\rightarrow \R^d$, $\sigma:[0,T]\rightarrow \R^{d\times m}$ and $\gamma:[0,T]\times \R_0 \rightarrow \R^{d\times n}$ such that $\int_0^T E\left[|\mu(s)|+\|\sigma(s)\|^2+\int_{\R_0}\|\gamma(s,z)\|^2 \nu(dz)\right] ds <\infty$ being $\|\cdot\|$ a matrix norm. 

Let $F$ be a once horizontally and twice vertically differentiable non-anticipative functional satisfying some technical continuity conditions on $F$ (see \cite[Proposition 6]{cont2010change}), $\mathcal{D}^V F_t$, $\mathcal{D}^V \mathcal{D}^V F_t$ and $\mathcal{D}^H F_t$. Then for any $t$ the following functional It\^{o} formula holds $P$-a.s.
\begin{align}\label{EQN:ItoLevy}
\begin{split}
\displaystyle F_t\left(\varphi_t\right) &=F_0\left( \varphi_0\right) +\int_{(0,t]}\DDD^H F_s(\varphi_{s_-})ds+ \int_{(0,t]} \DDD^V F_s(\varphi_{s_-}) \, dX(s) \\
&+ \int_{(0,t]} \frac{1}{2} Tr\,\left[\sigma^*(s) \DDD^{V}\DDD^{V} F_s(\varphi_{s_-})\sigma(s)\right] ds \\
&+ \int_{(0,t]} \int_{\R_0} \DDD^V F_s(\varphi_{s_-}) \left (F_s(\varphi_{s_-} + \gamma(s-,z)1_{\{s\}})-F_s(\varphi_{s_-}) -\gamma(s-,z) \right )N(ds,dz) \, .
\end{split}
\end{align}

\end{theorem}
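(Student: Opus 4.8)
The plan is to reduce the statement to the classical Itô formula for jump-diffusions combined with the pathwise expansions defining the vertical and horizontal derivatives, following the strategy of \cite{cont2010change}. First I would fix $t\in[0,T]$ and a partition $\pi_n=\{0=t_0^n<t_1^n<\cdots<t_{k_n}^n=t\}$ with mesh tending to $0$, and approximate the path $\varphi$ by the step process $\varphi^n$ obtained by freezing $\varphi$ on each subinterval $[t_i^n,t_{i+1}^n)$ at the left endpoint and then restarting from the true value at $t_{i+1}^n$; equivalently one writes $\varphi$ as a telescoping sum of "horizontal moves" (extending a frozen trajectory forward in time) and "vertical moves" (jumping the frozen endpoint to the new value). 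The horizontal piece, $F_{t_{i+1}^n}(\varphi^n_{t_i^n,\, t_{i+1}^n-t_i^n})-F_{t_i^n}(\varphi^n_{t_i^n})$, is handled by the fundamental theorem of calculus applied to $u\mapsto F_{t_i^n+u}(\varphi_{t_i^n,u})$, whose derivative is by definition $\mathcal{D}^H F$; summing and passing to the limit gives $\int_{(0,t]}\mathcal{D}^H F_s(\varphi_{s_-})\,ds$, using the assumed continuity of $\mathcal{D}^H F$ along the relevant trajectories.

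The vertical piece $F_{t_{i+1}^n}(\varphi_{t_i^n}$ extended, then endpoint moved to $\varphi(t_{i+1}^n))-F_{t_{i+1}^n}(\varphi_{t_i^n}$ extended$)$ is where the stochastic analysis enters. On each such increment the trajectory changes only at the final time point, so $F$ is being evaluated as a function of the single $\R^d$-vector $\varphi(t_{i+1}^n)$; applying the classical finite-dimensional Itô formula for the semimartingale $\varphi$ with decomposition $\varphi_t=\varphi_0+\int_0^t\mu\,ds+\int_0^t\sigma\,dW+\int_0^t\int_{\R_0}\gamma\,\widetilde N$ produces the first-order term $\mathcal{D}^V F_s(\varphi_{s_-})\,dX(s)$, the Itô correction $\tfrac12 Tr[\sigma^*\mathcal{D}^V\mathcal{D}^V F_s\,\sigma]\,ds$, and the jump term $\int_{\R_0}\big(F_s(\varphi_{s_-}+\gamma(s-,z)1_{\{s\}})-F_s(\varphi_{s_-})-\gamma(s-,z)\cdot\mathcal{D}^V F_s(\varphi_{s_-})\big)N(ds,dz)$, exactly matching the right-hand side of \eqref{EQN:ItoLevy} once the compensator of the small jumps is absorbed appropriately. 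One must check that the second-order vertical derivative indeed controls the quadratic variation contribution: this is the point where the hypothesis that $F$ is twice vertically differentiable with $\mathcal{D}^V\mathcal{D}^V F$ continuous, together with the integrability condition $\int_0^T E[|\mu|+\|\sigma\|^2+\int\|\gamma\|^2\nu(dz)]\,ds<\infty$, is used to justify taking the limit of the Riemann–Itô sums in probability.

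The main obstacle, and the reason the argument is delicate rather than routine, is the passage to the limit: one needs uniform-in-$n$ bounds so that the sums of vertical second-order remainders vanish, and this requires the "boundedness-preserving" continuity conditions on $\mathcal{D}^V F_t$, $\mathcal{D}^V\mathcal{D}^V F_t$, and $\mathcal{D}^H F_t$ along stopped/frozen trajectories (the technical hypotheses referenced from \cite[Proposition 6]{cont2010change}), plus a localization argument via stopping times $\tau_N=\inf\{s:|\varphi(s)|\ge N\text{ or the variation exceeds }N\}$ to reduce to the bounded case before removing the localization by $N\to\infty$. Since this is precisely \cite[Proposition 6]{cont2010change}, I would carry out the reduction above and then invoke that reference for the remaining estimates, noting that the only adaptation needed here is bookkeeping of the jump term of the compensated measure $\widetilde N$ versus the uncompensated $N$ in \eqref{EQN:ItoLevy}.
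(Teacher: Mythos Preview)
The paper does not prove this theorem at all: it is stated as a quotation of \cite[Proposition~6]{cont2010change}, with the sentence ``This result is taken from \cite[Proposition~6]{cont2010change}'' immediately preceding the statement and no proof following it. Your proposal is a faithful sketch of the Cont--Fourni\'e argument (telescoping into horizontal and vertical moves along a partition, classical finite-dimensional It\^o on the vertical increments, then passage to the limit under the boundedness-preserving continuity hypotheses with localization), so there is nothing to compare --- you have simply unpacked the reference that the paper invokes.
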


To be able to show that the path-dependent It\^{o}'s formula \eqref{EQN:ItoLevy} and the It\^{o}'s formula from Theorem \ref{THM:Ito} do coincide we need first to connect the two settings. Such a link can be established following \cite{f13in}, where the following operators are considered:
\begin{itemize}
 \item
the \textit{restriction} operator, for every $t\in [0,r]$
\[
\begin{split}
&M_t : \mathcal{D}([-r,0],\R^d)\to \mathcal{D}([0,t],\R^d) \, ,\\
&M_t (f)(s) = f(s-t)\, ,\quad s \in [0,t)\, ,
\end{split}
\]
\item
the \textit{backward extension} operator, for every $t\in (0,r)$
\[
\begin{split}
&L_t : \mathcal{D}([0,t],\R^d) \to  \mathcal{D}([-r,0],\R^d) \, ,\\
&L_t (f)(s) = f(0)\IndN{[-r,-t)}(s) + f(t+s) \IndN{[-t,0)}(s)\, ,\quad s \in [-r,0)\, ,
\end{split}
\]
\end{itemize}

Let us consider a non-anticipative functional $b:[0,T]\times \mathcal{D}([0,T];\R^d)\rightarrow \R$, $b(t,\psi)=b(t,\psi_t)=:b_t(\psi_t)$ for any non-anticipative stochastic process $\psi$, then one can define a different functional $\widehat{b}$ on $[0,T] \times \mathcal{D}([-r,0];\R^d)\times \R^d$ as 
\begin{align*}
\widehat{b}(t,X_t,X(t)) := b_t\left (\tilde{M}_t X_t\right )\, ,\quad (X_t,X(t)) \in \mathcal{D}([-r,0];\R^d)\times \R^d\, ,
\end{align*}
with 
\[
\tilde{M}_t X_t (s) :=
\begin{cases}
M_t (X_t)(s) & \mbox{ if } \, s \in [0,t)\,\\
X_t(s) & \mbox{ if } \, s=t \,\\
\end{cases}
\, .
\]
The converse holds true as well, in fact let us consider a given functional $\widehat{b}$ on $[0,T] \times \mathcal{D}([-r,0];\R^d)$, then we can obtain a corresponding functional $ b_t$ on $\mathcal{D}([0,t];\R^d)$ as
\begin{equation}\label{EQN:MExt}
b_t(\varphi_t) := \widehat{b}(t,L_t \varphi_t,\varphi_t(t))\, ,\quad  \varphi_t \in \mathcal{D}([0,t];\R^d)\, ,
\end{equation}
see \cite{f13in} for details.

We can now show how the vertical and horizontal derivatives can be written in terms of the Fr\'{e}chet derivative $D$ and the derivative with respect to the present state. Part of the next theorem was already established in \cite[Theorem 6.1]{f13in}.

\begin{proposition}\label{THM:RepPathNoP}
Consider a function $F :[0,T] \times \mathcal{D}([-r,0];\R^d) \to \R $ and let us define $u_t :\mathcal{D}([0,t];\R^d) \to \R $ as above in \eqref{EQN:MExt} $ u_t (X_t) := F(t,L_t X_t,X(t))$. Then the $i$-th vertical derivative $ \DDD^{V,i} $ of $u_t$ coincides with the derivative with respect to the present state $X^i(t) $ of $F$, namely 
\begin{equation}\label{EQN:VertDer}
\DDD^{V,i} u_t(X_t)= \partial_{x_i} F(t,L_t X_t,X(t))\, .
\end{equation}
Furthermore, we have
\begin{equation}\label{EQN:JumpPS}
u_t(X_t^{h^i})-u_t(X_t) = F(t,L_t X_t,X(t)+h^i(t,L_t X_t,X(t))-F(t,L_t X_t,X(t))\, .
\end{equation}
If we assume that $X_t \in W^{1,p}$, then
\begin{equation}\label{EQN:Levy}
\DDD^H u_t (X_t) = \partial_t  F(t,L_t X_t,X(t)) + \ScalD{D F(t,L_t X_t,X(t)),\nabla^+_\theta L_tX_t}\, ,
\end{equation}
holds, where the notation is given in Section \ref{SEC:Ito}. 
\end{proposition}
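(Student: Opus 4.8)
The plan is to treat all three identities as purely deterministic, pathwise statements, since the vertical and horizontal derivatives of \cite{cont2010change,cont2013functional} are defined as pathwise limits; no probabilistic input is needed. Everything rests on one structural feature of the backward extension operator $L_t$ from \cite{f13in}: by its definition, for $s\in[-r,0)$ the value $L_t(f)(s)$ depends on $f$ only through its restriction to the half-open interval $[0,t)$ — it involves $f(0)$ and the values $f(t+s)$ with $t+s\in[0,t)$ — and in particular never on the endpoint $f(t)$.

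First I would settle \eqref{EQN:VertDer} and \eqref{EQN:JumpPS}, which are essentially \cite[Theorem 6.1]{f13in}. Fix $t$ and $X_t\in\mathcal{D}([0,t];\R^d)$ and, for a vector $v\in\R^d$, let $X_t^v$ be the vertically bumped path $X_t^v(s)=X_t(s)+v\,\IndN{\{t\}}(s)$. By the observation above $L_t(X_t^v)=L_tX_t$, whereas $X_t^v(t)=X(t)+v$; hence $u_t(X_t^v)=F(t,L_tX_t,X(t)+v)$. Taking $v=he_i$ and letting $h\to0$ turns the difference quotient defining $\DDD^{V,i}u_t(X_t)$ into the one defining $\partial_{x_i}F(t,L_tX_t,X(t))$ (using that $F\in C^{1,1,2}$ is differentiable in the present state), which is \eqref{EQN:VertDer}; taking instead $v=h^i(t,L_tX_t,X(t))$ gives \eqref{EQN:JumpPS} directly.

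The substantive part is \eqref{EQN:Levy}. Recall the horizontal increment $X_{t,h}(s)=X_t(s)\IndN{[0,t]}(s)+X_t(t)\IndN{(t,t+h]}(s)$; set $g:=L_tX_t$ and take $h$ small enough that $t+h<r$. I would plug $X_{t,h}$ into the definition of $L_{t+h}$ and verify, interval by interval on $[-r,0)$, that $L_{t+h}(X_{t,h})(\theta)=g(\theta+h)$ whenever $\theta+h<0$ and $L_{t+h}(X_{t,h})(\theta)=X_t(t)$ on the flattened window $[-h,0)$; since $X_t\in W^{1,p}$ is absolutely continuous one has $X_t(t)=X_t(t^-)$, so this is exactly $S(h)g$, where $(S(h))_{h\geq0}$ is the (strongly continuous) left-shift semigroup on $L^p([-r,0];\R^d)$ whose generator is the right weak derivative $\nabla^+_\theta$ with domain $W^{1,p}([-r,0];\R^d)$ (cf. the operator of \cite{Yan}). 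Together with $X_{t,h}(t+h)=X_t(t)=X(t)$, this reduces the horizontal difference quotient to
\[
\frac{u_{t+h}(X_{t,h})-u_t(X_t)}{h}=\frac{F(t+h,S(h)g,X(t))-F(t,S(h)g,X(t))}{h}+\frac{F(t,S(h)g,X(t))-F(t,g,X(t))}{h}.
\]
For the first summand I would write the numerator as $\int_0^h\partial_tF(t+\sigma,S(h)g,X(t))\,d\sigma$ and use joint continuity of $\partial_tF$ together with $S(h)g\to g$ in $L^p$ (valid since $g\in L^\infty\subset L^p$ on the bounded interval) to obtain the limit $\partial_tF(t,g,X(t))$. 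For the second summand, Fr\'echet differentiability of $F(t,\cdot,X(t))$ writes it as $\ScalD{DF(t,g,X(t)),(S(h)g-g)/h}+o(\|S(h)g-g\|_{L^p})/h$; the hypothesis $X_t\in W^{1,p}$ forces $g=L_tX_t\in W^{1,p}([-r,0];\R^d)$ — it is continuous at the junction $\theta=-t$ by absolute continuity of $X_t$, and its weak derivative is $0$ on $[-r,-t)$ and $\nabla^+_\theta X_t(t+\cdot)$ on $[-t,0)$, which lies in $L^p$ — so $g$ is in the domain of the generator, $(S(h)g-g)/h\to\nabla^+_\theta g$ in $L^p$, and this same convergence bounds the remainder's numerator so that the $o$-term vanishes. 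Adding the two limits and invoking continuity of the pairing yields \eqref{EQN:Levy}.

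The step I expect to be the main obstacle is the explicit identification $L_{t+h}(X_{t,h})=S(h)(L_tX_t)$ in $L^p$: the bookkeeping at the endpoints $-r$, $-t$, $0$ and on the flattened window $[-h,0)$ has to be carried out carefully, and it is precisely here that the absolute continuity implied by $X_t\in W^{1,p}$ is used to rule out a spurious boundary contribution. The companion point — that $L_tX_t$ genuinely belongs to the generator's domain $W^{1,p}([-r,0];\R^d)$ — is what legitimizes replacing the semigroup difference quotient by the right weak derivative $\nabla^+_\theta$, and is the second place where the hypothesis $X_t\in W^{1,p}$ is indispensable.
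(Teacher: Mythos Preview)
Your proposal is correct. For \eqref{EQN:VertDer} and \eqref{EQN:JumpPS} you argue exactly as the paper does: the key observation that $L_t$ never evaluates the path at the terminal time $t$ forces $L_t(X_t^v)=L_tX_t$, so the vertical bump is felt only through the present-state variable of $F$, and the limit (resp.\ difference) reduces to $\partial_{x_i}F$ (resp.\ the jump in the last argument).

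For \eqref{EQN:Levy} the paper gives no argument at all and simply refers to \cite[Theorem~6.1]{f13in}. Your treatment --- identifying $L_{t+h}(X_{t,h})$ with the left-shift semigroup $S(h)$ applied to $g=L_tX_t$, splitting the difference quotient into a time-derivative and a Fr\'echet-derivative part, and invoking that $g\in W^{1,p}$ lies in the generator's domain so that $(S(h)g-g)/h\to\nabla_\theta^+g$ in $L^p$ --- is the natural semigroup argument and is consistent with the operator of \cite{Yan} to which the paper appeals in Proposition~\ref{PROP:Conv}. The two places you flag as delicate (the bookkeeping that yields $L_{t+h}(X_{t,h})=S(h)g$ on $[-r,0)$, and the verification that $g\in W^{1,p}$ with $g(0^-)=X(t)$, both relying on the continuity coming from $X_t\in W^{1,p}$) are exactly the right ones; your handling of them is sound.
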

\begin{proof}
Concerning \eqref{EQN:VertDer} we have 
\begin{equation}\label{EQN:DimHo}
\begin{split}
\DDD^{V,i} u_t(X_t) &= \lim_{h \to 0} \frac{1}{h} \left( u_t (X_t^h) -u_t(X_t) \right ) = \lim_{h \to 0} \frac{1}{h} \left( F(t,L_t X_t^h,X^h(t)) -F(t,L_t X_t,X(t))\right)  \\
&=  \lim_{h \to 0} \frac{1}{h} \left( F(t,X(t)+h,L_t X_t^h) -F(t,X(t),L_t X_t) \right ) =  \partial_i F(t,X(t),L_t X_t)\, .\\
\end{split}
\end{equation}
For what concerns \eqref{EQN:JumpPS}, proceeding as in \eqref{EQN:DimHo}, we immediately have
\[
\begin{split}
u_t(X_t^{h^i})-u_t(X_t)  &= F(t,L_t X_t^{h^i},X^{h^i}(t)) -F(t,L_t X_t,X(t)) = \\
&=F(t,X(t)+h^i,L_t X_t^{h^i}) -F(t,X(t),L_t X_t)\, .
\end{split}
\]
We refer to \cite[Theorem 6.1]{f13in} for a proof of equation \eqref{EQN:Levy}
\end{proof}

In the framework of this section, exploiting the previous proposition we have that, for suitable regular coefficients, It\^{o}'s formula from Theorem \ref{THM:Ito} and the path-dependent It\^{o}'s formula in Theorem \ref{functional Ito formula} coincide. In particular let us consider a process $X$ evolving according to
\begin{equation}\label{EQN:L2SM}
\begin{cases}
dX_t = f(t,X_t) dt + g(t,X_t) dW(t) + \int_{\R_0} h(t,X_t,z) \tilde{N}(dt,dz)\, ,\\
X_0 = \eta\, ,
\end{cases}
\end{equation}
for some suitably regular enough coefficients $f$, $g$ and $h$. Then proceeding as above we have that Equation \eqref{EQN:L2SM} can be written as a path dependent process
\begin{align*}
\begin{cases}
dX_t = \hat{f}_t(X_t) dt + \hat{g}_t(X_t) dW(t) + \int_{\R_0} \hat{h}_t(X_t) \tilde{N}(dt,dz)\, ,\\
X_0 = \eta\, ,
\end{cases}
\end{align*}
with $\hat{f}_t$, $\hat{g}_t$ and $\hat{h}_t$ defined as in \eqref{EQN:MExt}. Then we have the following result.

\begin{theorem}
Let $F:[0,T] \times M^p \to \R$, $F \in C^{1,1,2}([0,T] \times \mathcal{D}\times \R^d)$ and let us define $u_t :\mathcal{D}([0,t];\R^d) \to \R $ as in \eqref{EQN:MExt} $ u_t (X_t) := F(t,L_t X_t,X(t))$. Then It\^{o}'s formula from Theorem \ref{THM:Ito} and the path dependent It\^{o}'s formula from Theorem \ref{functional Ito formula} coincide.
\end{theorem}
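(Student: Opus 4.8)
The plan is to show that, under the correspondence $u_t(X_t) := F(t, L_t X_t, X(t))$, each term appearing in the functional It\^{o} formula \eqref{EQN:ItoLevy} of Theorem \ref{functional Ito formula} matches exactly the corresponding term in \eqref{EQN:Ito} of Theorem \ref{THM:Ito}, so that the two identities are one and the same after the change of variables. This is essentially a term-by-term dictionary argument relying on Proposition \ref{THM:RepPathNoP}.

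First I would record the semimartingale representation of $X$ coming from \eqref{EQN:SFDDE1}: here $\mu(s) = f(s,X_s,X(s))$, $\sigma(s) = g(s,X_s,X(s))$ and $\gamma(s-,z) = h(s,X_s,X(s))(z)$, and the integrability hypotheses needed in Theorem \ref{functional Ito formula} follow from Assumptions $(\mathbf{L_1})$, $(\mathbf{L_2})$ together with the moment estimate \eqref{ineq:LpEstimate}. Next, I would invoke Proposition \ref{THM:RepPathNoP}: by \eqref{EQN:VertDer}, $\DDD^{V,i} u_t(X_t) = \partial_{x_i} F(t,L_tX_t,X(t))$, so the vertical gradient $\DDD^V u_t$ equals $\nabla_x F$ and the second vertical derivative $\DDD^V\DDD^V u_t$ equals the Hessian $\nabla_x^2 F$; hence the Brownian integral term $\int_{(0,t]}\DDD^V F_s(\varphi_{s_-})\,dX(s)$ becomes $\int_0^t \nabla_x F(s,X_s,X(s))\cdot dX(s)$ and the trace term becomes $\frac{1}{2}\int_0^t \mathrm{Tr}[g^*\,\nabla_x^2 F\, g]\,ds$, matching \eqref{EQN:Ito} exactly. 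Similarly, by \eqref{EQN:JumpPS} the jump increment $F_s(\varphi_{s_-}+\gamma(s-,z)1_{\{s\}}) - F_s(\varphi_{s_-})$ equals $F(s,X_s,X(s)+h(s,X_s,X(s))(z)) - F(s,X_s,X(s))$, so the two compensated/jump integrals against $N(ds,dz)$ in \eqref{EQN:ItoLevy} coincide with the last two lines of \eqref{EQN:Ito}.

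The remaining point is the horizontal term: by \eqref{EQN:Levy}, whenever $X_t \in W^{1,p}$ one has $\DDD^H u_t(X_t) = \partial_t F(t,L_tX_t,X(t)) + \ScalD{DF(t,L_tX_t,X(t)), \nabla^+_\theta L_t X_t}$, and by Proposition \ref{PROP:Conv} the forward-integral term $\int_0^t \Scal{DF(s,X_s,X(s)),dX_s}$ in \eqref{EQN:Ito} equals $\int_0^t \Scal{DF(s,X_s,X(s)), \nabla^+_\theta X_s}\,ds$. Thus $\int_{(0,t]}\DDD^H F_s(\varphi_{s_-})\,ds$ splits precisely into $\int_0^t \partial_t F\,ds + \int_0^t \Scal{DF, dX_s}$, reproducing the two middle terms of \eqref{EQN:Ito}. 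Collecting all the identified terms, the right-hand sides of \eqref{EQN:Ito} and \eqref{EQN:ItoLevy} agree, and since both equal the left-hand side $F(t,X_t,X(t)) = u_t(X_t) = F_t(\varphi_t)$, the two formulas coincide.

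The main obstacle I anticipate is technical rather than conceptual: one must verify that the regularity hypotheses of Theorem \ref{functional Ito formula} (the continuity conditions on $F_t$, $\DDD^V F_t$, $\DDD^V\DDD^V F_t$, $\DDD^H F_t$ in the $\cadlag$ path topology) are genuinely implied by $F \in C^{1,1,2}([0,T]\times\mathcal D\times\R^d)$ via the transfer maps $M_t$, $L_t$, and — more delicately — that the use of \eqref{EQN:Levy} and of Proposition \ref{PROP:Conv} is legitimate, which presupposes $X_s \in W^{1,p}$ for a.e. $s$, a condition that does not hold for a generic jump-diffusion segment. So the honest statement is a coincidence \emph{for those functionals and solutions for which both formulas apply}; I would state this caveat explicitly and otherwise let the term-by-term matching above carry the proof, omitting the routine verifications for brevity.
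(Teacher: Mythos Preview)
Your proposal is correct and follows essentially the same approach as the paper: the paper's proof consists of a single sentence stating that the result is ``straightforward from Proposition \ref{THM:RepPathNoP} exploiting the backward extension operator $L_t$'' together with the two It\^o formulas, which is exactly the term-by-term dictionary argument you have written out in detail. Your explicit invocation of Proposition \ref{PROP:Conv} to handle the horizontal/forward-integral matching, and your honest caveat about the $W^{1,p}$ hypothesis, are appropriate elaborations that the paper leaves implicit.
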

\begin{proof}
It is straightforward from Proposition \ref{THM:RepPathNoP} exploiting the backward extension operator $L_t$ and eventually using It\^{o}'s formula from Theorem \ref{THM:Ito} and the \textit{path-dependent} It\^{o} formula from Theorem \ref{functional Ito formula}.
\end{proof}

\renewcommand{\abstractname}{Acknowledgements}
\begin{abstract}
At its early stage, this research has benefit of the sponsorship of the program Stochastics in Environmental Finance and Economics (SEFE) hosted at and funded by the Centre of Advanced Studies (CAS) of the Norwegian Academy of Science and Letters in the year 2014/15. CAS is thanked for its generous support and nice working environment. This research was completed within the NFR project FINEWSTOCH which is gratefully acknowledged.
\end{abstract}

\nocite{*}
\nocite{SMohammed2}
\nocite{MR0226684}
\bibliographystyle{abbrv}

\bibliography{jumpDelayBibliography}

\end{document}